\newtheorem{theorem}{Theorem}[section]
\newtheorem{remark}{Remark}[section]
\newtheorem{corollary}{Corollary}[section]
\numberwithin{equation}{section}
\DeclareMathOperator\erfc{erfc}
\newcommand{\RNum}[1]{\uppercase\expandafter{\romannumeral #1\relax}}
\newcommand{\beq}{\begin{equation}}
\newcommand{\eeq}{\end{equation}}
\newcommand{\beqq}{\begin{equation*}}
\newcommand{\eeqq}{\end{equation*}}
\newcommand{\bal}{\begin{aligned}}
\newcommand{\eal}{\end{aligned}}
\newcommand{\mR}{\mathbb{R}}
\begin{document}
\title{
Numerical simulation of   singularity propagation modeled by linear convection equations with spatially heterogeneous nonlocal interactions}

\author{Xiaoxuan Yu\footnote{School of Mathematical Sciences, University of Science
and Technology of China, Hefei, Anhui 230026, P.R. China.
Email: xxyu@mail.ustc.edu.cn.} \and Yan Xu\footnote{School of Mathematical Sciences, University of Science
and Technology of China, Hefei, Anhui 230026, P.R. China.
Email: yxu@ustc.edu.cn. Research supported by NSFC grants 12071455. }
\; \and 
Qiang Du\footnote{
Department of Applied Physics and Applied Mathematics,  and Data Science Institute,
Columbia University, New York, NY 10027, USA. Email: qd2125@columbia.edu. Research supported in part by NSF DMS-2012562.
}}
\date{}
\maketitle
\begin{abstract}
{We study the propagation  of singularities in solutions of  linear convection equations with spatially heterogeneous nonlocal interactions.  A spatially varying nonlocal horizon parameter is adopted in the model, which measures the range of nonlocal interactions.  Via heterogeneous localization, this can lead to the seamless coupling of the local and nonlocal models. 
We are interested in understanding the impact on singularity propagation due to the heterogeneities of nonlocal horizon and the local and nonlocal transition.
We  first  analytically derive equations to characterize the propagation of different types of singularities for various forms of  nonlocal horizon parameters in the nonlocal regime. We then use 
asymptotically compatible schemes to discretize the equations and carry out numerical simulations to illustrate the propagation patterns in different scenarios.}
\end{abstract}
\begin{keywords}
asymptotically compatible scheme, nonlocal conservation law, variable horizon, propagation of discontinuities, jump of derivatives
\end{keywords}

\section{Introduction}
Nonlocal models have been increasingly used in the simulation of physical processes involving some long-range interactions and singular or anomalous behaviors and they have also become useful  analysis tools in various applications. Examples include fracture mechanics, traffic flows, phase transitions, and image processing \cite{silling2010peridynamic,Amadori2012,Betancourt2011,Colombo2016,Goettlich2014,Huang2020,phase_tran,image_pro}.
A special class of nonlocal models are ones with a finite range of nonlocal interactions parameterized by what is called the nonlocal horizon \cite{silling2010peridynamic,du2019nonlocal}. As the horizon parameter diminishes to zero, the models could be localized to classical PDEs in some appropriate sense, particularly in regimes that the solutions are smooth. Such features also imply the possibility of coupling local PDEs with nonlocal models with a finite horizon parameter \cite{fang2019method,galvanetto2016effective,72,93,du2017,wang2019couplingofperidynamics,dispersion,
helenli}, we refer to a recent review \cite{delia2021review} for more references.   The motivation is easy to make:  while nonlocal models are better than the local models in simulating singular phenomena or long-range interaction, at the same time,  they often incur higher computational cost. Therefore, it is natural to consider the coupling so that the nonlocal model  is used only in regions that are necessary, such as the area near a fracture, while the local model can still be used in other regions. 
Among many established coupling strategy, one approach is to use a horizon parameter that vary in space. The latter is also a possible feature of complex multiscale systems involving highly heterogeneous materials properties.
An early study  on nonlocal peridynamics models with a variable horizon parameter has been made in 
\cite{silling2015variable}, though the models adopted are assumed to have  a variable horizon parameter with a positive lower bound over the domain. Later studies have also been made to consider a  variable horizon parameter with a transition from some region with positive parameter values to a region  with parameter  taking value zero,  which led to the heterogeneous localization of nonlocal models \cite{253,du2016heterogeneously}. This also enables a seamless coupling  of  a nonlocal model (with nonzero horizon parameter) and its local limit, which are obtained from the nonlocal models  as the horizon parameter approaches zero \cite{tao2019nonlocal,253}.    For a nonlocal wave equation defined in the whole space involving a variable horizon parameter, numerical studies based on asymptotically compatible discretization schemes have been carried with the help of nonlocal artificial boundary conditions \cite{zhangdu2018}.  One can also find other studies related to variable horizon  parameter in \cite{nikpayam2019variable,imachi2021smoothed}.

\par
In order to gain  further insight into the models with heterogeneous nonlocal interactions and seamless coupled local and nonlocal models, a nonlocal analog of linear first order convection equation
can be used as a prototype example. In a previous work \cite{ac_2021},  asymptotically compatible discretizations have been developed for such a seamlessly coupled local and nonlocal model with a heterogeneously defined horizon parameter function.
Numerical simulations were also presented with the focus on the propagation of waves between nonlocal and local regions when the initial conditions and horizon parameter functions are reasonably smooth.  This work serves as a follow-up, but with a new focus on the properties of solutions  corresponding to initial conditions and horizon parameter functions that  lack sufficient smoothness. 
The latter is an important subject as the flexibility of nonlocal models in allowing singular solutions has been a widely recognized feature  of nonlocal modeling  \cite{silling2010peridynamic,du2019nonlocal}. Thus, it is paramount to have a good understanding on how the propagation of solution singularities is affected by the various modeling characteristics. In this work, we pay particular attention to the impact on the solution behavior due to the different forms and the spatial variations of the nonlocal horizon parameter. 
We conduct studies in both theoretical and numerical fronts, and examine cases involving either non-smooth initial data or with horizon parameter functions that are not smoothly varying, particularly around the local and nonlocal transition region. Studies and the findings in these directions have not been systematically presented in the literature before. They provide us a more clear picture on the similarities and differences of the propagation of singularities modeled through local and nonlocal equations and lead to better understanding of nonlocal modeling.

{The paper is organized as follows. In Section 2, we introduce the models and problems studied in this paper. In Section 3, we carry out some theoretical analysis to derive analytically equations satisfied by the jump of the derivative of the solutions in the nonlocal setting. In Section 4, we first present different numerical schemes to simulate the solutions and  their possible singular features. 
We then discuss the numerical results and make comparisons. Section 5 summarizes the work.}

\section{The nonlocal convection model}
We begin with a simple example of  scalar one-dimensional  linear convection PDEs given by
 \beq\label{eq0}
\left\{\begin{array}{l}
{u_{t }(x, t)+u_x(x, t)=0}
,\quad x\in \mR, \; t\in \mR_+
 \\ {u(x, 0)=\psi_{0}(x),  \quad x\in \mR.
 }\end{array}\right.
\eeq
In this paper, we consider the linear nonlocal conservation law below as a nonlocal analog of \eqref{eq0}:
 \beq\label{eq1}
\left\{\begin{array}{l}
{u_{t }(x, t)+\mathcal{D}u(x, t)=0}
,\quad x\in \mR, \; t\in \mR_+
 \\ {u(x, 0)=\psi_{0}(x),  \quad x\in \mR,
 }\end{array}\right.
\eeq
where for a scalar valued function $u=u(x): \mR \rightarrow \mathbb{R}$, the nonlocal derivative operator $\mathcal{D}$ acting on $u$ is defined as
 \beq\label{eq:opD}
\mathcal{D}u(x)=\int_{\mR^+}[ u(x)-u(x-s)] \gamma(s,x) ds,  \quad x\in \mR .
\eeq
Here, $\gamma(s, x): \mathbb{R} \times \mathbb{R} \rightarrow \mathbb{R}$ is called a nonlocal kernel function.  More discussions of nonlocal derivative operators can be found in \cite{dyz17dcdsb,du2019nonlocal} and the references cited therein.
 Since  $\mathcal{D}u(x)$
  involves only the function value of $u=u(y)$ for $y\geq x$, it is also called an upwind nonlocal derivative. We refer to \cite{tian2015nonlocal,du2014nonlocal,delia2017nonlocal}
 for other related studies on similar nonlocal models. 

\subsection{The nonlocal kernel and nonlocal horizon parameter}
In general, we assume that the nonlocal kernel
$\gamma=\gamma(s,x)$ is nonnegative with either a compact support  or a sufficiently fast decay in $s$.
The kernel is also assumed to satisfy suitable normalization conditions so that $\mathcal{D}$ 
approaches the classical local derivative operator in the local limit when acting on smooth functions.

More specifically, 
 $\gamma=\gamma(s, x)$ is defined via a reference kernel function $H=H(s)$ that satisfies.
\begin{equation}\label{conditionH}
H(-s) = H(s)\geq 0,\; \forall s\in \mathbb{R} \quad \text{and}\quad \int_0^{+\infty}s H(s)ds=1.
\end{equation}
In this work, $H=H(s)$  is assumed to be smooth and  bounded on $\mR^+$. Moreover, it 
 either has a compact support or is a function  that itself and its derivatives
all decay sufficiently fast 
as $|s|\to \infty$. 

Further, in order to better represent the spatial variations of horizon parameter, we introduce  separately a  nonnegative, spatially variable horizon parameter $\zeta=\zeta(x)$ which controls
the range of nonlocal interactions and how horizon changes in space \cite{du2016heterogeneously}. And we denote that $\delta=\| \zeta\|_\infty<\infty$ and 
  $\Omega_{nl}=\{x\in \mR, \zeta(x)>0\}$. The latter may be referred to as  the nonlocal region.
  
 We now present a more concrete form of $\gamma(s, x)$ as
\beq\label{gamvar}
\gamma(s, x)=\frac{1}{\zeta^2(x)}H\left(\frac{s}{\zeta(x)}\right), \quad\forall x\in \Omega_{nl}.
\eeq
By the assumption \eqref{conditionH} on $H=H(s)$, we have
\begin{equation}\label{normalization}
\int_{\mR^+}s\gamma(s,x)ds=1.
\end{equation}

Associated with $\gamma= \gamma(s,x)$, we define $k=k(x)>0$ by
\begin{equation}
\label{kx}
k(x) = \int_{0}^{+\infty}\gamma(s,x)ds.
\end{equation}
It is easy to see that $k(x)$ is bounded for $x\in \Omega_{nl}$. Moreover, we have by 
 \eqref{gamvar} and the decay properties of $H=H(s)$ that
\begin{equation}
\label{kx1}
k(x) =\int_{\mR^+}
\frac{1}{\zeta(x)}H\left(\frac{s}{\zeta(x)}\right) d \frac{s}{\zeta(x)} = \frac{1}{\zeta(x)}
\int_{\mR^+} y H(y)dy <\infty.
\end{equation}
Similarly,  by the decay properties of $H=H(s)$, we have
\begin{equation}\label{eq3}
\int_{\mR^+}s^2 
\gamma(s, x)ds={\zeta(x)} 
\int_{\mR^+} y^2 H(y)dy \leq \delta  \int_{\mR^+} y^2 H(y)dy<\infty.
\end{equation}
In addition,  if $\zeta'(x)$ is well-defined at some point $ x\in\Omega_{nl}$, 
 then by differentiating \eqref{gamvar},
we get that 
\begin{equation}\label{eq4}
\frac{\partial \gamma}{\partial x}(s, x)=- \zeta^{\prime}(x)\left(\frac{2 
}{\zeta^{3}(x)} H\left(\frac{s}{\zeta(x)}\right) + \frac{s}{\zeta^{4}(x)}  H^{\prime}\left(\frac{s}{\zeta( x)}\right)\right).
\end{equation}
We can see that at such a point $x$, 
\begin{equation}\label{eq4.1}
\int_{\mR^+} \left| \frac{\partial \gamma}{\partial x}
(s, x)\right|ds \leq \frac{| \zeta'(x)|}{|\zeta(x)|^2} \int_{\mR^+}( 2 H(y) + y|H'(y)|)dy <\infty.
\end{equation}
%and
%\begin{equation}\label{eq4.2}
%\int_{\mR^+} s \left| \frac{\partial \gamma}{\partial x}
%(s, x)\right|ds \leq \frac{| \zeta'(x)|}{|\zeta(x)|} \int_{\mR^+}( 2 yH(y) + y^2|H'(y)|)dy <\infty.
%\end{equation}

\subsection{The local limit}
For any $ x\in \Omega_{nl}$,  as shown in \cite{ac_2021},  by Taylor expansion and \eqref{eq3}, we formally 
have that
$$
\begin{aligned}
& \left| \mathcal{D}u(x,t) - u_x(x,t)\right|  =\left|\int_{\mR^+}[ u_x(x,t)s-\frac12 u_{xx}(\theta,t)s^2] \gamma(s,x) ds- u_x(x,t)\right|\\
&\qquad \leq  \frac{1}{2} \|u_{xx}(\cdot,t)\|_\infty 
\int_{\mR^+}s^2 
\gamma(s, x)ds\\
&\qquad =
 \frac{1}{2}  \|u_{xx}(\cdot,t)\|_\infty\,  {\zeta(x)}
\int_{\mR^+} y^2 H(y)dy 
\to 0,  \quad \text{as} \quad  \zeta(x)\to 0
 \end{aligned}
$$
for smooth solutions $u=u(x,t)$. 

Consequently, we formally
recover the local limit  \eqref{eq0}
from \eqref{eq1} by taking $\delta\to 0$.  Due to this relationship, the equation \eqref{eq1} is often called a linear nonlocal convection equation (with an upwinding nonlocal space derivative).

Note that if we are at a point where $\zeta(x)=0$ at some $x\in \mR$, then we may interpret $s\gamma(s,x)$ 
via the $\zeta(x)\to 0$ limit  as
the Dirac delta measure at $s=0$ so that
$\mathcal{D}u(x,t)=u_x(x,t)$. 
We can also set $k(x)=+\infty$ at such a point if one wishes to have $k=k(x)$ defined for all cases.

In this work, we focus on the case that $\zeta(x)$ is a spatially varying  continuous function, including the cases of smooth and piecewise smooth functions.  Given that $H=H(s)$ is assumed to be smooth, the regularity of $\gamma(s,x)$ is essentially determined by the horizon parameter function $\zeta=\zeta(x)$.

\section{Propagation of singularities}

The main questions studied in this work are related with the impact of the nonlocal interactions on the 
propagation of singularities.  For linear model problems, there have been earlier studies on the subject. For example,  one can show the stationarity of the initial jump discontinuities in the linear peridynamics equation of motion when a constant horizon is taken \cite{weckner2005effect}.
For the nonlocal models studied here, in the case
of a smooth kernel and a smoothly defined variable horizon, similar studies has been made in  \cite{ac_2021}.

Concerning equation  \eqref{eq1}, it is clear that in the local region, any discontinuity or other singularity in the local region would travel along characteristic lines. Thus, we focus on potential singularities in the nonlocal region.  For this reason, we assume that in the rest of this section, $\zeta(x)>0$ for all $x\in \mR$, that is,  $\Omega_{nl}=\{x\in \mR, \zeta(x)>0\}=\mR$. To allow more precise statements of the findings, we further assume that 
$$\delta_m=\min_{x\in \mR} \zeta(x)>0,
$$
so that the solutions discussion in this section always refer to the nonlocal ones.

\subsection{Propagation of initial discontinuities}
% with continuous horizon parameter}

We first recall a result discussed in  \cite{ac_2021}, which is presented below in a more precise form, concerning the propagation of 
discontinuities generated from the initial data

\begin{theorem}
\label{thm:jump-u}
Let $u=u(x, t)$ denote the solutions of \eqref{eq1}. Assume that %$H=H(\cdot)$ is a smooth and  bounded on $\mR^+$ and
 $\zeta=\zeta(x)$ is continuous and bounded on $\mR$ and $\psi_0$ is piecewise continuous and uniformly bounded with a finite number of discontinuities on $\mR$.
 Then, 
 \beq
\label{eq:jump-u}
[u(x,t)] = e^{-k(x)t}[u(x,0) ]= e^{-k(x)t}[\psi_0(x)], \quad\forall t\in \mR^+\,, x\in \mR.
\eeq
\end{theorem}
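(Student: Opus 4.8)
The plan is to exploit the fact that the nonlocal operator $\mathcal{D}$ is a \emph{bounded integral} operator rather than a differential one, so it cannot transport a discontinuity but can only damp it in place. First I would split $\mathcal{D}$ using the definition \eqref{kx} of $k(x)$, writing
\beq
\mathcal{D}u(x,t) = k(x)u(x,t) - F(x,t), \qquad F(x,t) := \int_{\mR^+} u(x-s,t)\,\gamma(s,x)\,ds,
\eeq
so that \eqref{eq1} becomes the linear integro-differential equation $u_t(x,t) + k(x)u(x,t) = F(x,t)$. The crucial structural feature is that $F$ is a look-behind integral: it samples only values $u(y,t)$ with $y\le x$ and averages them against the kernel. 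By Duhamel's principle, at each fixed $x$ this equation is equivalent to the (still implicit, since $F$ contains $u$) representation
\beq\label{plan-duhamel}
u(x,t) = e^{-k(x)t}\psi_0(x) + \int_0^t e^{-k(x)(t-\tau)} F(x,\tau)\,d\tau.
\eeq

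The heart of the proof is the claim that $x\mapsto F(x,t)$ is continuous for each fixed $t$, that is, $[F(x,t)]=0$ everywhere. This is where the smoothing effect of integration enters: although $u(\cdot,t)$ has jumps, integrating it against $\gamma$ erases them. Concretely, after the substitution $y=x-s$ one has $F(x,t)=\int_{-\infty}^{x} u(y,t)\,\gamma(x-y,x)\,dy$; as $x\to x_0$ the integrand converges pointwise to $u(y,t)\gamma(x_0-y,x_0)$ for all but finitely many $y$ (the jump locations of $u(\cdot,t)$), while the continuity of $\zeta$, the lower bound $\zeta\ge\delta_m>0$, and the fast decay of $H$ furnish a fixed integrable dominating function. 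Dominated convergence then gives $\lim_{x\to x_0}F(x,t)=F(x_0,t)$, hence $[F(x_0,t)]=0$.

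Granting this, I would take the jump of both sides of \eqref{plan-duhamel}. Since $e^{-k(x)t}$ is continuous and positive (here $k$ is continuous and bounded by \eqref{kx1}, as $\zeta$ is continuous and bounded below), the first term contributes $e^{-k(x)t}[\psi_0(x)]$, while the continuity of $F(\cdot,\tau)$ makes the time integral contribute nothing, yielding \eqref{eq:jump-u} directly. Equivalently, differentiating $[u(x,t)]$ in $t$ and using $[F(x,t)]=0$ together with $[k(x)u(x,t)]=k(x)[u(x,t)]$ (again by continuity of $k$) produces the pointwise ordinary differential equation $\partial_t[u(x,t)]=-k(x)[u(x,t)]$, whose solution is the stated exponential decay.

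The step I expect to be the main obstacle is not this formal computation but securing the underlying regularity: one must first know that $u(\cdot,t)$ is, for every $t$, piecewise continuous with discontinuities confined to the same finite set where $\psi_0$ jumps, since the dominated-convergence argument for $[F]=0$ relies on $u(\cdot,t)$ having only finitely many jumps. The representation \eqref{plan-duhamel} suggests a self-consistent resolution: its second term is continuous in $x$ whenever $F(\cdot,\tau)$ is, so $u(\cdot,t)$ inherits its jump set precisely from $e^{-k(x)t}\psi_0(x)$, i.e. from $\psi_0$. Making this rigorous amounts to running a Picard iteration (or contraction mapping) for \eqref{plan-duhamel} in the Banach space of uniformly bounded functions that are piecewise continuous with jumps only on the fixed finite initial jump set, thereby establishing existence, uniqueness, and invariance of the jump set in one stroke; the remaining care is the routine justification of interchanging the one-sided limits $x\to x_0^\pm$ with both the spatial integral defining $F$ and the time integral in \eqref{plan-duhamel}.
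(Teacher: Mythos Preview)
Your approach is correct and is essentially the same as the paper's: the paper multiplies by the integrating factor $e^{k(x_0\pm\epsilon)t}$, subtracts the equations at $x_0\pm\epsilon$, and lets $\epsilon\to0$ to obtain $\partial_t\{e^{k(x_0)t}[u(x_0,t)]\}=0$, which is precisely your ODE $\partial_t[u]=-k[u]$ after observing that $[F]=0$ (the paper's passage of the limit inside the integral is exactly your dominated-convergence argument for the continuity of $F$), and your Duhamel formulation is simply the integrated form of the same computation. Your explicit attention to the regularity of $u(\cdot,t)$ via a Picard iteration in a space of functions with a fixed finite jump set is a point the paper handles more loosely (it invokes only boundedness via the maximum principle), so if anything your proposal is the more careful of the two.
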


\begin{proof}%[Proof of theorem  \ref{thm:jump-u}]
We first observe that for the given initial condition, the solution of \eqref{eq1}
 remains uniform bounded due to the maximum principle.  Moreover, by  \eqref{eq3},
the smoothness assumption on $H=H(s)$ and $\zeta=\zeta(x)$,
 and the assumption that $\delta_m=\min_{x\in \mR} \zeta(x)>0$, we see that
 $k=k(x)$ is smooth and uniformly bounded for any $ x\in\mR$.

 At a point $x_0\in\mR$, we take a sufficiently small
  $\epsilon>0$ and calculate,  as in \cite{ac_2021}, the time derivative of a weighted difference of the solution $u$:
\beqq
\begin{aligned}
&  \frac{d}{dt} \left[ e^{k(x_0+\epsilon)t}u(x_0+\epsilon,t)-e^{k(x_0-\epsilon)t}u(x_0-\epsilon,t)\right]\\
 &\;\; = k(x_0+\epsilon)e^{k(x_0+\epsilon)t}u(x_0+\epsilon,t)-k(x_0-\epsilon)e^{k(x_0-\epsilon)t}u(x_0-\epsilon,t)\\
&\qquad+ e^{k(x_0+\epsilon)t}\left(  -\int_0^{+\infty}(u(x_0+\epsilon,t)- u(x_0+\epsilon - s,t))\gamma(s, x_0+\epsilon)ds\right)\\
&\qquad+ e^{k(x_0-\epsilon)t}\left(\int_0^{+\infty}(u(x_0-\epsilon,t) - u(x_0-\epsilon - s,t))\gamma(s,x_0-\epsilon)ds\right)\\
&\;\; = e^{k(x_0+\epsilon)t} \!\int_0^{+\infty}  \! \!u(x_0\!+\!\epsilon \!- \!s,t)\gamma(s, x_0\!+\!\epsilon)ds \!\\
&\qquad -  \!e^{k(x_0-\epsilon)t} \!\int_0^{+\infty} \!\!u(x_0\!-\!\epsilon \!- \!s,t)\gamma(s,x_0\!-\!\epsilon)ds.
\end{aligned}
\eeqq
Now,  we take $\epsilon\to 0$.
With the assumptions on $\gamma$ and the boundedness of $u$, we can pass the limit inside the integrals to get
\beqq
\begin{aligned}
&  \frac{d}{dt} \left[ e^{k(x_0+\epsilon)t}u(x_0+\epsilon,t)-e^{k(x_0-\epsilon)t}u(x-\epsilon,t)\right]\\
&\quad  \to  e^{k(x_0)t} \!\int_0^{+\infty}  \! \!u(x_0\!- \!s,t)\gamma(s, x_0)ds \!-  \!e^{k(x_0)t} \!\int_0^{+\infty} \!\!u(x_0\! - \!s,t)\gamma(s,x_0 )ds\\
&\quad = 0, \quad \text{as}\quad \epsilon\to 0.
\end{aligned}
\eeqq
\begin{equation}\label{eq:jump-u1}
%\left\{\begin{aligned} 
\frac{\partial }{\partial t} \left\{
e^{k(x_0)t}
\bm{[}u(x_0,t)\bm{]} \right\}=0.
%&\frac{1}{\zeta^{2}(x_0)} H\left(\frac{s}{\zeta(x_0)}\right)d s=0 \\
%&\left.\bm{[}u_x\bm{]}( x_0, 0\right)=\bm{[}\psi_{0 x}\bm{]}(x_0).
%\end{aligned} 
%\right.
\end{equation}
This means that $e^{k(x_0)t}[u(x_0,t) ]$ is constant in time, which implies \eqref{eq:jump-u}.
\end{proof}

From the above theorem, we also get the following corollary.

\begin{corollary}
Under the assumption of theorem \ref{thm:jump-u},
 the solution remains  piecewise continuous with only points of discontinuities inherited from  the initial data.
\end{corollary}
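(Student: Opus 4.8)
The plan is to combine the jump identity of Theorem \ref{thm:jump-u} with a Duhamel representation that isolates the singular part of the solution. First I would rewrite the nonlocal operator as $\mathcal{D}u(x,t)=k(x)u(x,t)-(\mathcal{K}u)(x,t)$, where $k(x)$ is defined in \eqref{kx} and the nonlocal averaging operator is $(\mathcal{K}u)(x,t)=\int_{\mR^+}u(x-s,t)\gamma(s,x)\,ds$. Equation \eqref{eq1} then reads $u_t=-k(x)u+\mathcal{K}u$, and multiplying by the integrating factor $e^{k(x)t}$ gives $\partial_t\big(e^{k(x)t}u\big)=e^{k(x)t}\,\mathcal{K}u$. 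Integrating in time yields the mild formulation
\beq
u(x,t)=e^{-k(x)t}\psi_0(x)+\int_0^t e^{-k(x)(t-\tau)}(\mathcal{K}u)(x,\tau)\,d\tau,
\eeq
in which the first term carries the initial singularities, scaled by the strictly positive and smooth factor $e^{-k(x)t}$, while the second term is a time integral of the nonlocal average $\mathcal{K}u$.

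The crux is to show that $x\mapsto(\mathcal{K}u)(x,\tau)$ is continuous. For this I would use the uniform boundedness of $u$ (recorded via the maximum principle in the proof of Theorem \ref{thm:jump-u}) together with the continuity of $\zeta=\zeta(x)$ and the smoothness and decay of $H=H(s)$, which by \eqref{gamvar} make $\gamma(s,x)$ continuous in $x$ and supply the integrable bound \eqref{eq4.1}. Since $\mathcal{K}u$ integrates $u(\cdot,\tau)$ against $\gamma(\cdot,x)$ over $s$, the finitely many discontinuities of $u(\cdot,\tau)$ lie on a set of measure zero and do not obstruct the integral; a dominated convergence argument then gives continuity of $(\mathcal{K}u)(\cdot,\tau)$. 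Because $\delta_m>0$ forces $k$ to be smooth and bounded, the factor $e^{-k(x)(t-\tau)}$ is continuous in $x$, so continuity passes through the time integral and the whole second term is continuous in $x$. Consequently the only discontinuities of $u(\cdot,t)$ are those of $e^{-k(x)t}\psi_0(x)$, namely exactly the discontinuity points of $\psi_0$, with jump $e^{-k(x)t}[\psi_0(x)]$ in agreement with \eqref{eq:jump-u}.

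I expect the main obstacle to be the continuity of $\mathcal{K}u$ in $x$ near the initial jump locations, where the shifted argument $x-s$ sweeps across a discontinuity of $u(\cdot,\tau)$; here I would argue that each such crossing enters only through an integral in $s$ and is therefore harmless, while the $x$-dependence of the kernel is controlled uniformly by \eqref{eq4.1}. A shorter alternative is to invoke Theorem \ref{thm:jump-u} directly: since $\psi_0$ has finitely many discontinuities, \eqref{eq:jump-u} yields $[u(x,t)]=0$ off this finite set and $[u(x,t)]\neq0$ on it for every finite $t$, which already pins the discontinuity set. I prefer the Duhamel route, however, because it upgrades ``vanishing jump'' to genuine one-sided continuity and thereby establishes piecewise continuity rather than merely locating the jumps.
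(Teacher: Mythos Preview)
Your Duhamel approach is sound and is genuinely more thorough than what the paper does: the paper offers no independent proof of the corollary at all, treating it as an immediate consequence of the jump identity \eqref{eq:jump-u}. That is precisely the ``shorter alternative'' you describe in your last paragraph, and your observation that vanishing jump is weaker than piecewise continuity is well taken; your mild-formulation argument actually closes that gap.

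Two technical points deserve tightening. First, you invoke \eqref{eq4.1} for the dominating bound, but \eqref{eq4.1} requires $\zeta'(x)$, whereas Theorem~\ref{thm:jump-u} only assumes $\zeta$ is continuous. You do not need \eqref{eq4.1}: since $\delta_m>0$ and $\zeta$ is continuous, the family $\gamma(\cdot,x)$ is continuous in $L^1(\mR^+)$ as a function of $x$ (pointwise convergence plus $\int_0^\infty\gamma(s,x)\,ds=k(x)\to k(x_0)$, then Scheff\'e), which is all dominated convergence needs. Second, as written your continuity argument for $\mathcal{K}u$ appeals to ``the finitely many discontinuities of $u(\cdot,\tau)$'', which is exactly what you are proving and so is circular. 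The clean fix is to change variables $y=x-s$ and write
\[
(\mathcal{K}u)(x,\tau)=\int_{\mR} u(y,\tau)\,\gamma(x-y,x)\,\mathbf{1}_{\{y<x\}}\,dy,
\]
so that the $x$-dependence sits entirely in the kernel. Then $|(\mathcal{K}u)(x,\tau)-(\mathcal{K}u)(x_0,\tau)|\le\|u\|_\infty\,\|\gamma(x-\cdot,x)\mathbf{1}_{\{\cdot<x\}}-\gamma(x_0-\cdot,x_0)\mathbf{1}_{\{\cdot<x_0\}}\|_{L^1}\to0$, using only boundedness of $u$ (which you already have from the maximum principle) and $L^1$-continuity of translation and scaling. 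With these two adjustments your proof goes through cleanly and supplies the piecewise-continuity conclusion that the paper only asserts.
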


 The results presented above might seem to be inconsistent with those corresponding to a pure local convection equation at first sight since, for the local limit \eqref{eq0},  piecewise smooth solutions can still be defined with the discontinuities travel with a constant speed along the local characteristic lines instead of being stationary in space. However, 
by \eqref{kx1}, we see that 
$$k(x) %\frac{1}{\zeta(x)} \int_{\mR^+} H\left(\frac{s}{\zeta(x)}\right) d \frac{s}{\zeta(x)} 
=  \frac{1}{\zeta(x)}
\int_{\mR^+} y H(y)dy.
$$
Thus, as $\zeta(x)\to 0$, the factor $e^{-k(x)t}$ decays exponentially fast at the given $x$ for any given $t>0$. The factor vanishes in the local limit as it should so that these stationary discontinuities die down fast and thus consistency with the local model can still be preserved in the local limit.
Even for a finite $\zeta(x)>0$, this apparent discontinuity is expected to be short lived as it also decays exponentially fast in $t$  for $t>0$.

At the same time, we note that the above result also implies that in the nonlocal region,  an initial discontinuity does not travel along the characteristic line 
$x=x_0+t$ of the local convection equation. This can be seen as  a result of viscosity implicitly encoded in the nonlocal unwind derivative. The viscous effect again diminishes as $\delta\to 0$ so that we still expect the propagation of the {\em smoothed} transition layer, rather than a sharp discontinuity,
traveling along the local characteristics, and the transition layer gets steeper as $\delta$ gets smaller.

The above type of studies on $[u]$ can also be applied to study behavior of $[u_x]$ as we discuss next, which can help with the understanding of the numerical experiments.

\subsection{The propagation of $ [u_x]$}

Similar to theorem \ref{thm:jump-u}, we can derive analogous results on $ [u_x]$.
\begin{theorem}
\label{thm:jump-ux}
Let $u=u(x, t)$ denote the solutions of \eqref{eq1}. Assume that %$H=H(\cdot)$ is a smooth and  bounded on $\mR^+$ and
 $\zeta=\zeta(x)$ is uniformly bounded and smooth with a bounded derivative on $\mR$ and $\psi_0$ is  continuous and uniformly bounded on $\mR$. In addition, 
 assume that $\psi_0$ is piecewise smooth with a piecewise bounded local derivative  $\psi_{0x}$ that only has
 %Moreover,  $\psi_{0x}$ is  uniformly bounded on $\mR$ with
  a finite number of jump discontinuities.
 Then,  \\
 \beq
 \label{eq:jump-ux}
 \bm{[}u_x\bm{]} (x,t)
= e^{-k(x)t}  \bm{[}u_x\bm{]} (x,0)= 
 e^{-k(x)t}\bm{[}\psi_{0x}\bm{]} (x), \quad\forall t\in \mR^+\,, x\in \mR.
\eeq
 %as $t\rightarrow 0$. 
\end{theorem}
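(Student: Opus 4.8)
The plan is to mirror the proof of Theorem \ref{thm:jump-u}, but now to track the weighted jump of the spatial derivative $v=u_x$ rather than of $u$ itself. First I would record two preliminary facts. Since $\psi_0$ is continuous, Theorem \ref{thm:jump-u} gives $[u(x,t)]=e^{-k(x)t}[\psi_0(x)]=0$, so that $u(\cdot,t)$ is continuous in $x$ for every $t$; and, exactly as in the proof of Theorem \ref{thm:jump-u}, the hypotheses on $H$ and $\zeta$ together with $\delta_m>0$ make $k=k(x)$ smooth and uniformly bounded. Under the stated regularity of $\psi_0$ and $\zeta$, the solution $u$ is continuous and piecewise smooth in $x$, with $u_x$ bounded and possessing only finitely many jump discontinuities inherited from those of $\psi_{0x}$.

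Next I would differentiate \eqref{eq1} in $x$ at points where $u_x$ exists. Writing $\mathcal{D}u(x)=k(x)u(x)-\int_{\mR^+}u(x-s)\gamma(s,x)\,ds$ and differentiating the kernel via \eqref{eq4}, one finds that $v=u_x$ solves
\[
v_t(x,t)+\mathcal{D}v(x,t)+R(x,t)=0,\qquad R(x,t):=\int_{\mR^+}\bigl[u(x,t)-u(x-s,t)\bigr]\,\frac{\partial \gamma}{\partial x}(s,x)\,ds.
\]
The decisive structural point is that the extra source $R$ involves only $u$, not $u_x$. Because $u(\cdot,t)$ is continuous and $\int_{\mR^+}|\partial_x\gamma(s,x)|\,ds$ is finite, uniformly on compact sets, by \eqref{eq4.1}, dominated convergence shows that $R(\cdot,t)$ is continuous in $x$; in particular it carries no jump and will not contribute to $[u_x]$.

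With this in hand I would repeat the $x_0\pm\epsilon$ computation of Theorem \ref{thm:jump-u} verbatim, now for $v$. Differentiating $e^{k(x_0+\epsilon)t}v(x_0+\epsilon,t)-e^{k(x_0-\epsilon)t}v(x_0-\epsilon,t)$ in $t$ and substituting $v_t=-\mathcal{D}v-R$, the diagonal terms $k(x_0\pm\epsilon)\,v(x_0\pm\epsilon,t)$ cancel against those produced by the time derivative of the exponential factors, leaving only the convolution integrals $\int_{\mR^+}v(x_0\pm\epsilon-s,t)\gamma(s,x_0\pm\epsilon)\,ds$ together with $-R(x_0\pm\epsilon,t)$. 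Letting $\epsilon\to 0$, the integrands meet the jumps of the bounded function $v$ only on an $s$-set of measure zero, so by dominated convergence both convolution integrals tend to the common value $\int_{\mR^+}v(x_0-s,t)\gamma(s,x_0)\,ds$ and cancel, while $R(x_0\pm\epsilon,t)\to R(x_0,t)$ cancels by the continuity established above. Hence $\frac{\partial}{\partial t}\{e^{k(x_0)t}[u_x(x_0,t)]\}=0$, which integrates to \eqref{eq:jump-ux}.

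The main obstacle is the treatment of the source term $R$: everything hinges on $R$ being continuous in $x$, so that differentiating the nonlocal operator does not manufacture a new jump in $u_x$ beyond those transported from the initial data. This in turn rests on two inputs, namely the continuity of $u$ supplied by Theorem \ref{thm:jump-u} (which is precisely why $\psi_0$ is assumed continuous) and the uniform integrability of $\partial_x\gamma$ from \eqref{eq4.1} (which is why $\zeta$ is assumed smooth with bounded derivative and $\delta_m>0$). A secondary technical point is to justify a priori that $u_x$ exists as a bounded, piecewise-smooth solution of the differentiated equation and that the differentiation under the integral sign is legitimate; I would settle these through the same boundedness and dominated-convergence arguments used for $R$.
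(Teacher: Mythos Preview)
Your proposal is correct and follows essentially the same route as the paper: differentiate \eqref{eq1} in $x$ to obtain the equation \eqref{eq2} for $u_x$ with the extra source involving $\partial_x\gamma$, then repeat the integration-factor and $x_0\pm\epsilon$ argument of Theorem~\ref{thm:jump-u}, using the continuity of $u$ (from Theorem~\ref{thm:jump-u} and the continuity of $\psi_0$) and the uniform boundedness of $u_x$ to pass the limit and obtain \eqref{eq:jump-ux1}. Your explicit isolation of the source term $R$ and the observation that its continuity in $x$ is precisely what kills its contribution to the jump is a nice clarification of what the paper leaves implicit.
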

\begin{proof}
First, from the theorem \ref{thm:jump-u} and the assumption on the continuity of $\psi_0$, we see that $u=u(x,t)$ is continuous and uniform bounded  in $x$ for all $t\in \mR^+$.

By taking the classical derivative of $x$ on both sides of the equation \eqref{eq1}, we have 
\begin{equation}\label{eq2}
\left\{\begin{array}{l}
\displaystyle
\frac{\partial u_x(x,t)}{\partial t}+\int_{\mR^{+}}\left(
u_x(x, t) - u_x(x-s, t)\right) \gamma(s, x) d s \\
 \quad \displaystyle 
+\int_{\mR^{+}}\left(u ( x, t)-u(x-s, t)\right)
 \frac{\partial \gamma(s, x)}{\partial x} d s=0 , \;\;\forall x \in \mR\, a.e.,\\[.1cm]
u_{x}( x, 0 )=\psi_{0 x}(x),
\end{array}\right.
\end{equation}
%and 
%\begin{equation}\label{eq3}
%\gamma(s, x)=\frac{1}{\zeta^{2}(x)} H\left(\frac{s}{\zeta(x)}\right)
%\end{equation}
Thus, we may  view $u_x$ as a solution to an equation of the same form as \eqref{eq1} but with a nonzero right hand side that is uniformly bounded by the assumptions on the kernel and horizon parameter functions. Hence, if  $\psi_{0x}$ is  uniformly bounded, we also see that $u_x$ stays uniformly bounded.

Similar to the proof of theorem \ref{thm:jump-u},
at  a point $x_0\in\mR$, we take a sufficiently small $\epsilon>0$ such that the equation \eqref{eq2} holds at $x_0\pm \epsilon$.
Then we use the integration factor and  subtract the first  equations of \eqref{eq2}  evaluated at $x_0\pm \epsilon$ respectively to get
\begin{eqnarray}\label{eq5}
%\left\{\begin{array}{l}
%\displaystyle
& & \frac{\partial }{\partial t}
[ e^{k(x_0+\epsilon)t}
u_x(x_0 + \epsilon,t) - e^{k(x_0-\epsilon)t} u_x(x_0 -\epsilon,t)
] \nonumber\\
&& \;\;= e^{k(x_0-\epsilon)t} \int_{\mR^{+}}\left(u ( x_0 - \epsilon, t)-u(x_0 - \epsilon-s, t)\right) 
\frac{\partial \gamma}{\partial x}(s, x_0 - \epsilon) ds   \nonumber\\
&&\quad  - e^{k(x_0+\epsilon)t}  \int_{\mR^{+}}\left(u ( x_0 + \epsilon, t)-u(x_0 + \epsilon-s, t)\right) 
\frac{\partial \gamma}{\partial x}(s, x_0 + \epsilon) ds \nonumber\\
&& \quad 
 -  e^{k(x_0-\epsilon)t} \int_{\mR^{+}} u_x(x_0 - \epsilon-s, t)
\gamma(s, x_0 - \epsilon) ds 
\nonumber\\
&& \quad +
e^{k(x_0+\epsilon)t} \int_{\mR^{+}} u_x(x_0 + \epsilon-s, t)
\gamma(s, x_0 + \epsilon) ds .
\end{eqnarray}
Then, by the continuity of $u(x,t)$ and $k(x)$ in $x$, the uniform bound on $u_x(x,t)$ and the regularity and decay properties of $\gamma(s,x)$ and its derivative, we let $\epsilon
\to 0$ and pass limit inside the integrals to get
\begin{equation}\label{eq:jump-ux1}
%\left\{\begin{aligned} 
\frac{\partial }{\partial t} \left\{
e^{k(x_0)t}
\bm{[}u_x\bm{]} (x_0,t)\right\}=0.
\end{equation}
This leads to equation \eqref{eq:jump-ux}.
\end{proof}
\begin{remark}
From \eqref{eq1} and \eqref{eq2}, we see that there is some difference between the equations of
$u$ and $u_x$ but
 $\bm{[}u\bm{]}$ and $\bm{[}u_x\bm{]}$ satisfy the same equation for different initial value. 
%Therefore, it can be concluded that for a smooth kernel and horizon, it is  thus similar to study the propagation of the jump in the derivative of a solution with a piecewise differentiable data and a solution with a discontinuous initial value. 
\end{remark}
\begin{remark}
Similar to the possible jump discontinuity in a solution $u$ with a discontinuous initial data,  we see that for continuous initial data with a discontinuous derivative,  the spatial position of the jump discontinuity of  $u_x$ also does not change with time and no new discontinuity is generated. Further, the jump of $u_x$ also decreases in time and also in the local limit.
\end{remark}
%\begin{remark}
%Without losing generality, we can assume $x_0 = 0$ in proof of the later theorem.
%\end{remark}

We now consider the case with a horizon parameter function that may have a discontinuous derivative.

\begin{theorem}
\label{thm:jump-ux-dis}
Let $u=u(x, t)$ denote the solutions of \eqref{eq1}. Assume that %$H=H(\cdot)$ is a smooth and  bounded on $\mR^+$ and
 $\zeta=\zeta(x)$ and $\psi_0$ are continuous and uniformly bounded on $\mR$. In addition, 
 assume that $\psi_0$ and $\zeta$ are piecewise smooth with  piecewise bounded local derivatives  $\psi_{0x}$ and $\zeta'$
 having only
 %Moreover, $\psi_{0x}$ and $\zeta'$ is  uniformly bounded on $\mR$ with
  a finite number of jump discontinuities. 
 Then, for $t>0$ and $x\in \mR$, we have
\begin{eqnarray}
\label{eq:jump-ux-dis}
&& \bm{[}u_{x}\bm{]}(x, t) = e^{-k(x) t}%\left(
\left[\psi_{0 x}\right](x) - \frac{\left[\zeta^{\prime}(x)
\right]}{\zeta^{4}(x)}
\!\int_{0}^{t}  e^{-k(x) (t-\tau)}
\int_{\mR^+}\!   \nonumber
\\
&&\;\;  \left(u(x, \tau)\!-\!u(x-s, \tau)\right)
\left(
2 \zeta(x)
 H(\frac{s}{\zeta(x)}) + s H^{\prime}(\frac{s}{\zeta( x)})\right)  ds
d\tau.
\end{eqnarray}
%Furthermore, if $\psi_{0x}$ and $\zeta'$ and has only a finite number of discontinuous points, the absolute value of $\bm{[}u_x\bm{]}$ will decrease to 0 as $t\rightarrow 0$. 
\end{theorem}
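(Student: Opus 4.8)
The plan is to follow the integrating-factor argument of Theorem~\ref{thm:jump-ux} up to the point where the two one-sided contributions are compared, and then to exploit the one new feature: because $\zeta'$ is now allowed to jump, the two one-sided limits of $\partial\gamma/\partial x$ at a singular point $x_0$ no longer agree, so instead of cancelling they leave behind a source term proportional to $[\zeta'(x_0)]$. First I would collect the regularity facts that make the computation legitimate. Since $\psi_0$ is continuous, Theorem~\ref{thm:jump-u} gives $[u(x,t)]=e^{-k(x)t}[\psi_0(x)]=0$, so $u(\cdot,t)$ is continuous and uniformly bounded; because $\delta_m=\min_x\zeta(x)>0$ and $\zeta$ is continuous, $k=k(x)$ is continuous and bounded by \eqref{kx1}. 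Differentiating \eqref{eq1} in $x$ yields \eqref{eq2}, valid for a.e.\ $x$ (off the finite jump set of $\zeta'$ and $\psi_{0x}$); viewing \eqref{eq2} as an equation for $u_x$ with forcing $\int_{\mR^+}(u(x,t)-u(x-s,t))\,\partial_x\gamma(s,x)\,ds$, which is bounded by \eqref{eq4.1} together with the bound on $u$, shows that $u_x$ stays uniformly bounded.

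Next, fix $x_0$ and choose $\epsilon>0$ small enough that $x_0$ is the only possible jump of $\zeta'$ or $\psi_{0x}$ in $[x_0-\epsilon,x_0+\epsilon]$ and that \eqref{eq2} holds at $x_0\pm\epsilon$. Multiplying \eqref{eq2} at $x_0\pm\epsilon$ by the integrating factors $e^{k(x_0\pm\epsilon)t}$ and subtracting produces exactly \eqref{eq5}, which isolates two groups of terms: the convolution terms $e^{k(x_0\pm\epsilon)t}\int_{\mR^+}u_x(x_0\pm\epsilon-s,t)\gamma(s,x_0\pm\epsilon)\,ds$, and the kernel-derivative terms built from $\partial_x\gamma(s,x_0\pm\epsilon)$.

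The decisive step is the limit $\epsilon\to0$. The two convolution terms have equal one-sided limits, since $u$ is continuous, $\gamma(s,\cdot)$ is continuous at $x_0$, and the jump of $u_x$ at the single point $s=\epsilon$ does not affect the integral; hence they cancel, just as in Theorem~\ref{thm:jump-ux}. The kernel-derivative terms, however, do not cancel: $\zeta$ is continuous across $x_0$ but $\zeta'$ jumps, so by \eqref{eq4} the one-sided limits differ by
\begin{equation*}
\partial_x\gamma(s,x_0^-)-\partial_x\gamma(s,x_0^+)=\frac{[\zeta'(x_0)]}{\zeta^{4}(x_0)}\Bigl(2\zeta(x_0)H\bigl(\tfrac{s}{\zeta(x_0)}\bigr)+s\,H'\bigl(\tfrac{s}{\zeta(x_0)}\bigr)\Bigr).
\end{equation*}
Passing the limit inside the integrals (legitimate by the uniform bound on $u$ and the decay of $H$ and $H'$) then gives the differential identity
\begin{equation*}
\frac{\partial}{\partial t}\Bigl\{e^{k(x_0)t}[u_x](x_0,t)\Bigr\}=\frac{[\zeta'(x_0)]}{\zeta^{4}(x_0)}\,e^{k(x_0)t}\!\int_{\mR^+}\!\bigl(u(x_0,t)-u(x_0-s,t)\bigr)\Bigl(2\zeta(x_0)H\bigl(\tfrac{s}{\zeta(x_0)}\bigr)+s\,H'\bigl(\tfrac{s}{\zeta(x_0)}\bigr)\Bigr)ds.
\end{equation*}

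Finally I would read this as a scalar linear ODE in $t$ for $e^{k(x_0)t}[u_x](x_0,t)$, whose value at $t=0$ is $[\psi_{0x}](x_0)$. Integrating from $0$ to $t$ by variation of constants and multiplying through by $e^{-k(x_0)t}$ yields the Duhamel representation \eqref{eq:jump-ux-dis}, the source being transported in time through the factor $e^{-k(x_0)(t-\tau)}$. I expect the main obstacle to be the rigorous justification of the $\epsilon\to0$ limits: one must confirm that $u_x$ exists as a uniformly bounded function solving \eqref{eq2} a.e.\ despite the reduced regularity of $\zeta$, that dominated convergence applies to the convolution and kernel-derivative integrals right at the jump of $u_x$ at $x_0$, and---most delicately---that the one-sided limits of $\partial_x\gamma$ are tracked with the correct orientation, since the sign and prefactor of the surviving $[\zeta'(x_0)]$ term are exactly what must be matched against \eqref{eq:jump-ux-dis}; this bookkeeping is precisely what separates this statement from the clean cancellation of Theorem~\ref{thm:jump-ux}.
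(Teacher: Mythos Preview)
Your proposal is correct and follows essentially the same route as the paper's own proof: both start from the integrating-factor identity \eqref{eq5} established for Theorem~\ref{thm:jump-ux}, let $\epsilon\to 0$, observe that the kernel-derivative terms now survive because $[\partial_x\gamma]$ is proportional to $[\zeta']$ via \eqref{eq4}, and then integrate the resulting scalar ODE in~$t$ with initial value $[\psi_{0x}](x_0)$. Your closing caution about sign bookkeeping is well placed---matching the orientation of the one-sided limits of $\partial_x\gamma$ against the sign in \eqref{eq:jump-ux-dis} is indeed the only place where the argument differs from the clean cancellation of Theorem~\ref{thm:jump-ux}, and it deserves a careful recheck.
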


\begin{proof}
%First, from the theorem \ref{thm:jump-u} and the assumption on the continuity of $\psi_0$, we see that $u=u(x,t)$ is continuous and uniform bounded  in $x$ for all $t\in \mR^+$.

The proof process of the first half is similar to Theorem \ref{thm:jump-ux}. at  a point $x_0\in\mR$, we take a sufficiently small $\epsilon>0$ so that \eqref{eq5} holds.

Then, by the continuity of $u(x,t)$ and $k(x)$ in $x$, the uniform bound on $u_x(x,t)$ and the regularity and decay properties of $\gamma(s,x)$ and its derivative, we let $\epsilon
\to 0$ and pass limit inside the integrals to get
\begin{eqnarray}
\label{eq:jump-ux2}
&& \frac{\partial }{\partial t} \left\{
e^{k(x_0)t}
\bm{[}u_x\bm{]} (x_0,t)\right\}\nonumber\\
&&\qquad =e^{k(x_0)t}  \int_{\mR^{+}}
\bm{[}\frac{\partial \gamma}{\partial x}\bm{]}(s, x_0) \left(u ( x_0 , t)-u(x_0-s, t)\right) ds .
\end{eqnarray}
By integrating the two sides of the equation with respect to $t$ and using \eqref{eq4}, we can get equation \eqref{eq:jump-ux-dis}.
\end{proof}

\begin{remark} From the
theorem \ref{thm:jump-ux-dis}, we see that, the discontinuity of $u_x$ not only results from the discontinuity of  the derivative  of  the initial data, but also from the horizon parameter function $\zeta$ if the latter is continuous and has a derivative that is only piecewise  
continuous and bounded with jump discontinuities. The locations of the discontinuity of $u_x$  do not change with time. 
Moreover, while the jump of  $u_x$ due to the initial data decays exponentially in time, the jump due to the kernel function evolves in time in a more involved fashion given by the accumulated integral.
\end{remark}

\section{Numerical schemes and simulations}
{Here we use an asymptotically compatible scheme developed in \cite{ac_2021}
to numerically simulate the solution and its spatial derivative (when appropriate. We report the experimental findings to complement our analytical investigations given in the earlier section.
}

\subsection{Numerical schemes}
\label{sec:4-1}
We first describe the  numerical schemes used in the simulations.

 \subsubsection{Discretization of the nonlocal model \eqref{eq1}} \label{method 1}
Here, we directly use the numerical scheme used in \cite{ac_2021} without much elaboration, and directly introduce the specific formulae of the numerical scheme.
Let us use ${h}$ and $\tau$ to denote the spatial and time step size, $x_j=j
{h},\, t_n = n\tau$ as the spatial and time grid points. Denote $U_j^n$ as the numerical solution of $u$ at the grid point $(x_j, t_n)$.
We denote $\mathcal{D}_h$ as the spatial discretization   of the nonlocal operator $\mathcal{D}$.
\beq\label{sc1}
\begin{aligned} \mathcal{D}_h u(x_j, t)
&=\sum_{k \in \mathbb{Z}} a_{j, j-k} u(x_{j-k}, t),
\end{aligned}
\eeq
where 
\beq
a_{j, j-k}=\left\{\begin{array}{rc}\displaystyle-\int_{\mathbb{R}^+} \phi_{x_k}(s)\gamma(s, x_j)ds, & { \quad k>0,} \\\displaystyle -{\sum_{k \neq 0}\int_{\mathbb{R}^+} \phi_{x_k}(s)\gamma(s, x_j)ds}, & { \quad k=0,}\\ 0,  & {\quad k<0,} \end{array}\right.
\label{a-defn}
\eeq
for $\zeta(x_j)>0$. $\phi_{x_k}$ is the standard hat function on the mesh $\{x_j\}$ and centered at point $x_k$.  If $\zeta(x_j)=0$, the the values of $a_{j,l}$ need be refined by
\begin{align}
a_{j,l}=\left\{\begin{array}{ccl}{-\frac1h}, & {} & {l=j-1,} \\ {\frac1h}, & {} & {l=j,} \\ {0}, & {} & {\mbox{else,}}\end{array}\right.
\label{a-defl}
\end{align}
which gives exactly the coefficients of the first order local forward difference quotient operator.

For the discretization of time derivatives, we still use the forward Euler scheme. Then, combined with the previous spatial discretization scheme \eqref{sc1}, we can obtain the complete discretization scheme for the nonlocal equation \eqref{eq1}
\beq\label{sc2}
U_j^{n+1}=U_j^{n} -\tau\sum_{k \in \mathbb{Z}} a_{j, j-k} U_{j-k}^n\;.
\eeq
The numerical scheme \eqref{sc2} is known to be asymptotically compatible and 
provides a monolithic discretization to the nonlocal model
\eqref{eq1} encompassing both the local  (where $\zeta(x)=0$) and nonlocal regimes (where $\zeta(x)>0$).

In order to show the possible discontinuity of $u_x$ numerically, we also use the numerical solutions $U_{j + 1}^n, U_{j-1}^n$ and $U_{j}^n$ to calculate the jump of the difference quotient of $u_x$ where $x_j$ lies in the discontinuous point of $\psi_{0x}(x)$ 
and $\zeta'=\zeta'(r)$. 
%Because there are discontinuities in $u_x$, we
Note that we need to choose an appropriate grid points in the numerical simulations so that the possible discontinuities just fall on some of the spatial nodes, say $x_j$, so as to ensure the calculation accuracy of $\bm{[}u_x\bm{]}$. Then we take
\beq
\bm{[}u_x\bm{]}(x_j,t_n) \approx \frac{U_{j +1}^n - U_{j}^n}{h} -  \frac{U_{j}^n - U_{j-1}^n}{h},
\eeq
as the numerical  approximation. For the jump in $u$ itself at $x_j$, we use 
\beq
\bm{[}u\bm{]}(x_j,t_n) \approx U_{j +1}^n - U_{j-1}^n.
\eeq

\subsubsection{Evaluation jumps by equations \eqref{eq:jump-u1}, \eqref{eq:jump-ux1} and \eqref{eq:jump-ux2}}\label{method 2}

As an independent check of the numerical results, particular concerning the discontinuity of $u$ and/or  $u_x$,  instead of
direct numerical differentiation for the latter, we also  numerically evaluate the equation
\eqref{eq:jump-u1}, \eqref{eq:jump-ux1} and \eqref{eq:jump-ux2}. 
% \eqref{eq9} and \eqref{eq10}
%under the assumption of if $\psi_{0x}$ and $\zeta'$ has only a finite number of discontinuous points, 
The solutions of \eqref{eq:jump-u1} and \eqref{eq:jump-ux1} are given by \eqref{eq:jump-u} and \eqref{eq:jump-ux}  directly.
To find the jump of $u_x$ using \eqref{eq:jump-ux2}, we use the formula  \eqref{eq:jump-ux-dis}. For the time integration, we adopt the Riemann sum in, which is like using the forward Euler time-stepping of the ODEs   \eqref{eq:jump-ux2}. {For the spatial integration, we adopt the composite Trapezoidal rule  using grid points as quadrature points. Thus, we can use the numerically computed values $\{U^n_j\}$ of the solution to calculate the integral.}

\subsection{Numerical results}
Now, we show some numerical results for initial data $\psi_0$ and horizon parameter function $\zeta$ with different smoothness properties. In all the following experiments, we  consider an initial data with a compact support. This implies, for the time interval of interest, say for $t$ up to 1, 2, or 5,  the solution  remains sufficiently close to zero outside a sufficient large spatial domain. We then take such a truncated computation domain to conduct the simulations and treat the solution outside as zero so that the accuracy of the numerical solution within the region of interests can be ensured. The reported numerical results are obtained with the spatial grid size $h = 0.0125$ and the time step $ \tau = 0.00625$, which are also tested through refinement to ensure that they are small enough to give a sufficiently accurate solution. 

\subsubsection{$\psi_0$ smooth, $\zeta$ smooth}
First, let us consider the case where the initial data $\psi_0$ and the horizon parameter function $\zeta$ are both smooth.
Specifically, we choose
$$H(s) = 20 e^{-10s^2}, \quad \psi_0(x)=e^{-10x^2}$$
and 
$$\zeta(x)=\erfc(-\frac{x}{2^\alpha})\quad\text{for}\; \alpha =-1,\;0 \; \text{and} \;1\; \text{respectively}.$$
In Figure \ref{fig1}, we present the horizon parameter functions selected here, which are  three smooth functions with transition layers
becoming narrower as $\alpha$ changes from $1$ to $0$ then to $-1$.

 In this case, according to the previous theorem, we expect that there will be no discontinuity in the solution nor its derivative.
% of the solution, which means  there is no so-called vertical line, reflected in image of the solution.
%The vertical line does not exist in the plot for wave propagation in Figure \ref{fig2} \ref{fig3}, which is consistent with our expectations. In fact
 The results of the solutions on $x\in (-2,4)$ 
are shown in Figure \ref{fig2} for $t\in (0, 2)$ with a top view
 and Figure \ref{fig3}   for $t\in (0, 1)$ (a zoomed-in version of Figure \ref{fig2}, with a 3D view)
 to illustrate the wave propagation.  
The smooth contours and surface plots indicate that smooth  solutions are obtained, as expected. 
The initial Gaussian peak gets smeared as time goes on, due to the diffusive effect present in the nonlocal equations. Moreover, the faster the wave enters the nonlocal region (as $\alpha$ gets smaller, from the left plot to the right plot), the more dissipative the solution exhibits  (as indicated by more dramatic color changes shown in the plots).

Similar observations as those made for this example have been reported in the literature, see for example \cite{ac_2021}, which is not our focus here.
They are included here solely for comparison purposes to contrast
with the other cases presented later.

\begin{figure}[htb]
\centering
\label{fig1:subfig:a} 
\includegraphics[width=7.5cm]{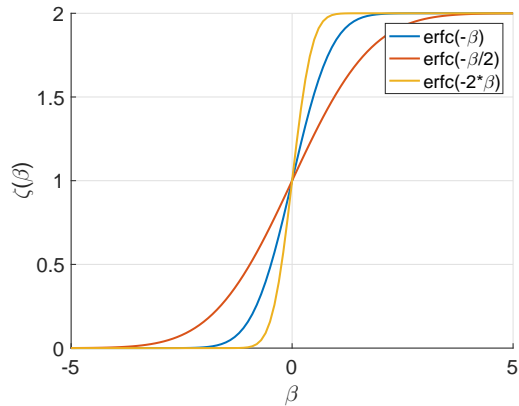}
\caption{Plot of different choices of smoothly defined $\zeta=\zeta(\beta)$.}
\label{fig1} %% label for entire figure
\end{figure}
\begin{figure}[htb]
\centering
\subfigure[$\zeta(x)=\erfc(-\frac{x}{2})$]{
\includegraphics[width=4.3cm]{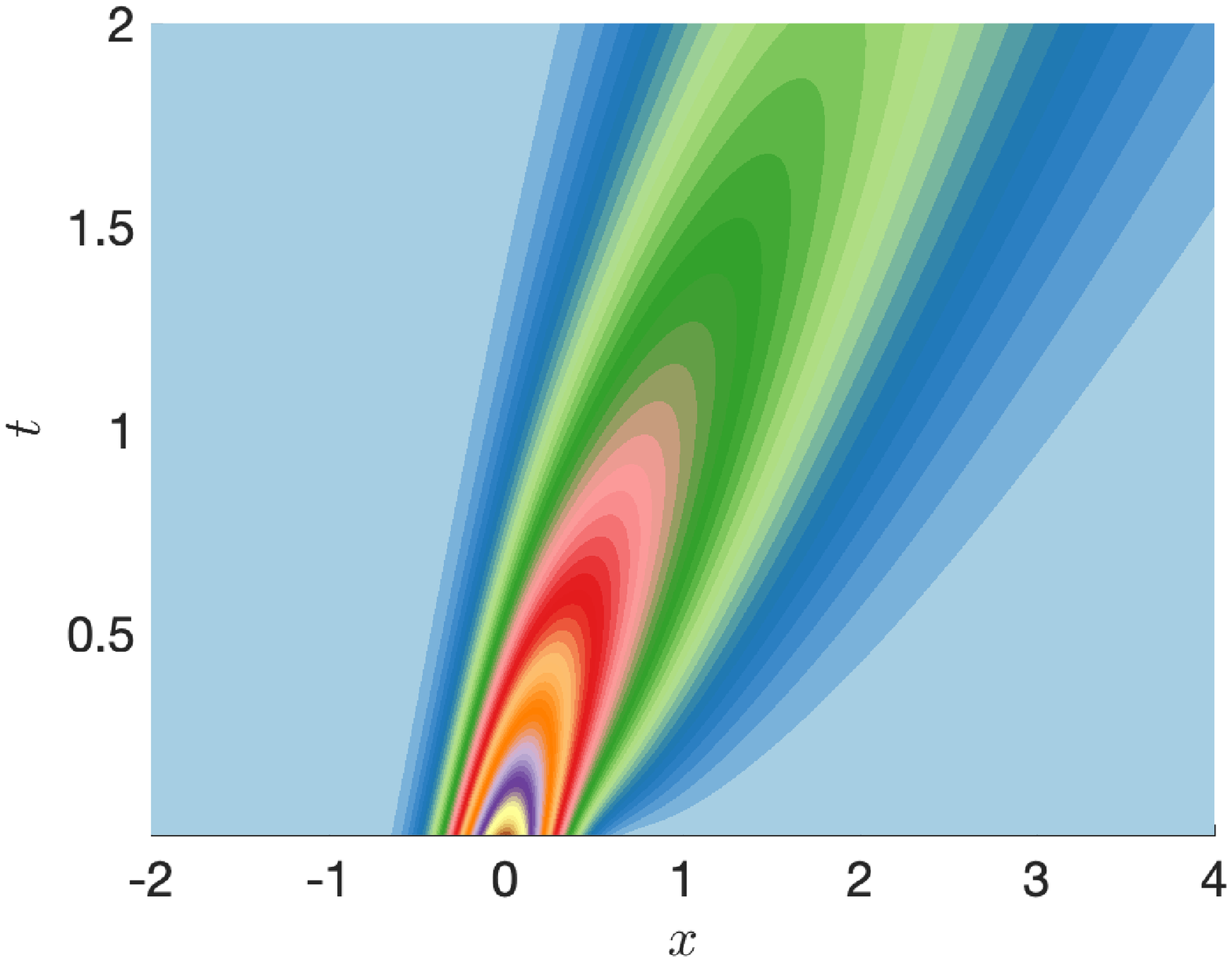}}
\subfigure[$\zeta(x)=\erfc(-x)$]{
\includegraphics[width=4.3cm]{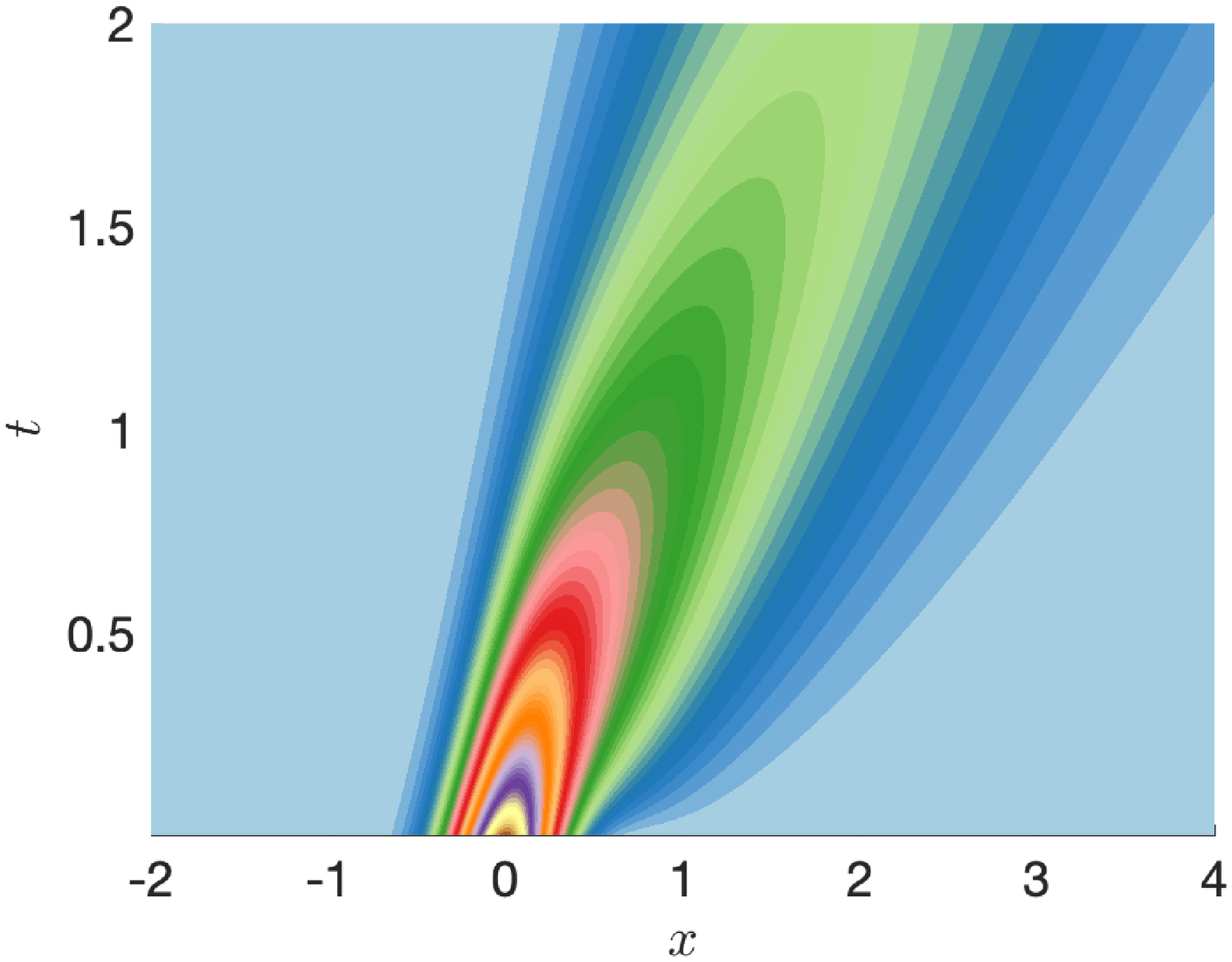}}
\subfigure[$\zeta(x)=\erfc(-2x)$]{
\includegraphics[width=4.3cm]{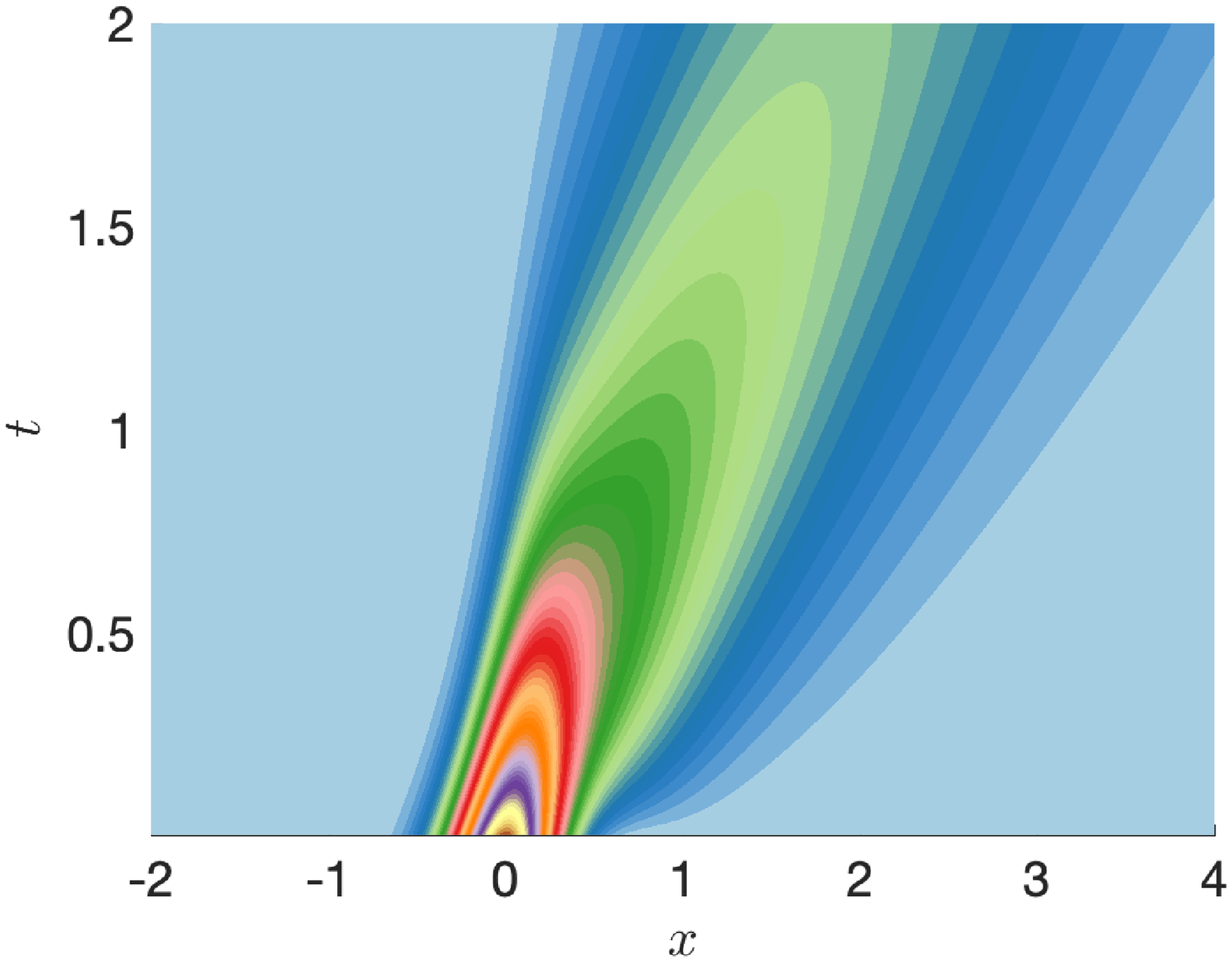}}
\caption{Wave propagation corresponding to different choices of  smooth $\zeta=\zeta(x)$ with a smooth initial data: top view.}
\label{fig2} %% label for entire figure
\end{figure}

\begin{figure}[htb]
\centering
\subfigure[$\zeta(x)=\erfc(-\frac{x}{2})$]{
\includegraphics[width=4.3cm]{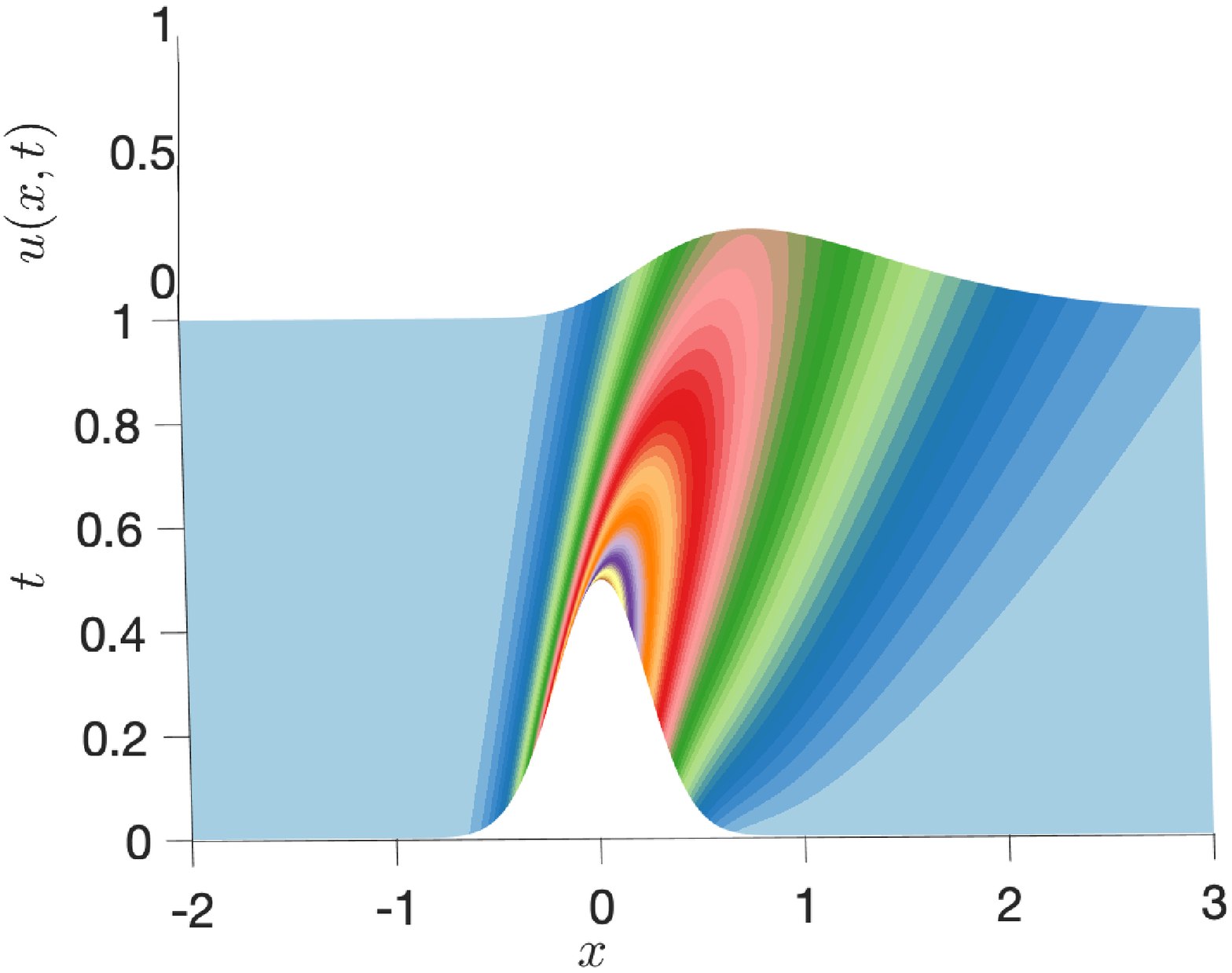}}
\subfigure[$\zeta(x)=\erfc(-x)$]{
\includegraphics[width=4.3cm]{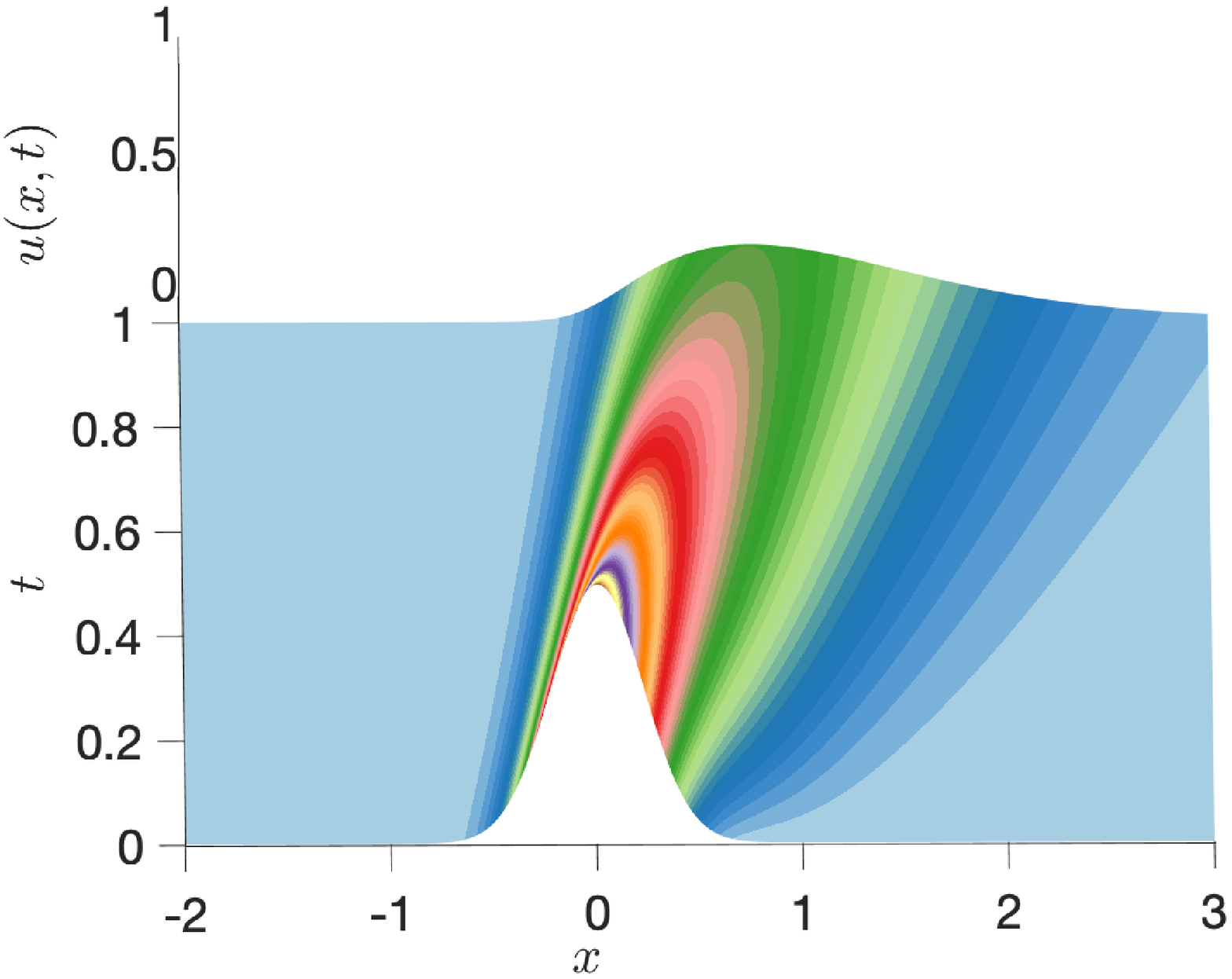}}
\subfigure[$\zeta(x)=\erfc(-2x)$]{
\includegraphics[width=4.3cm]{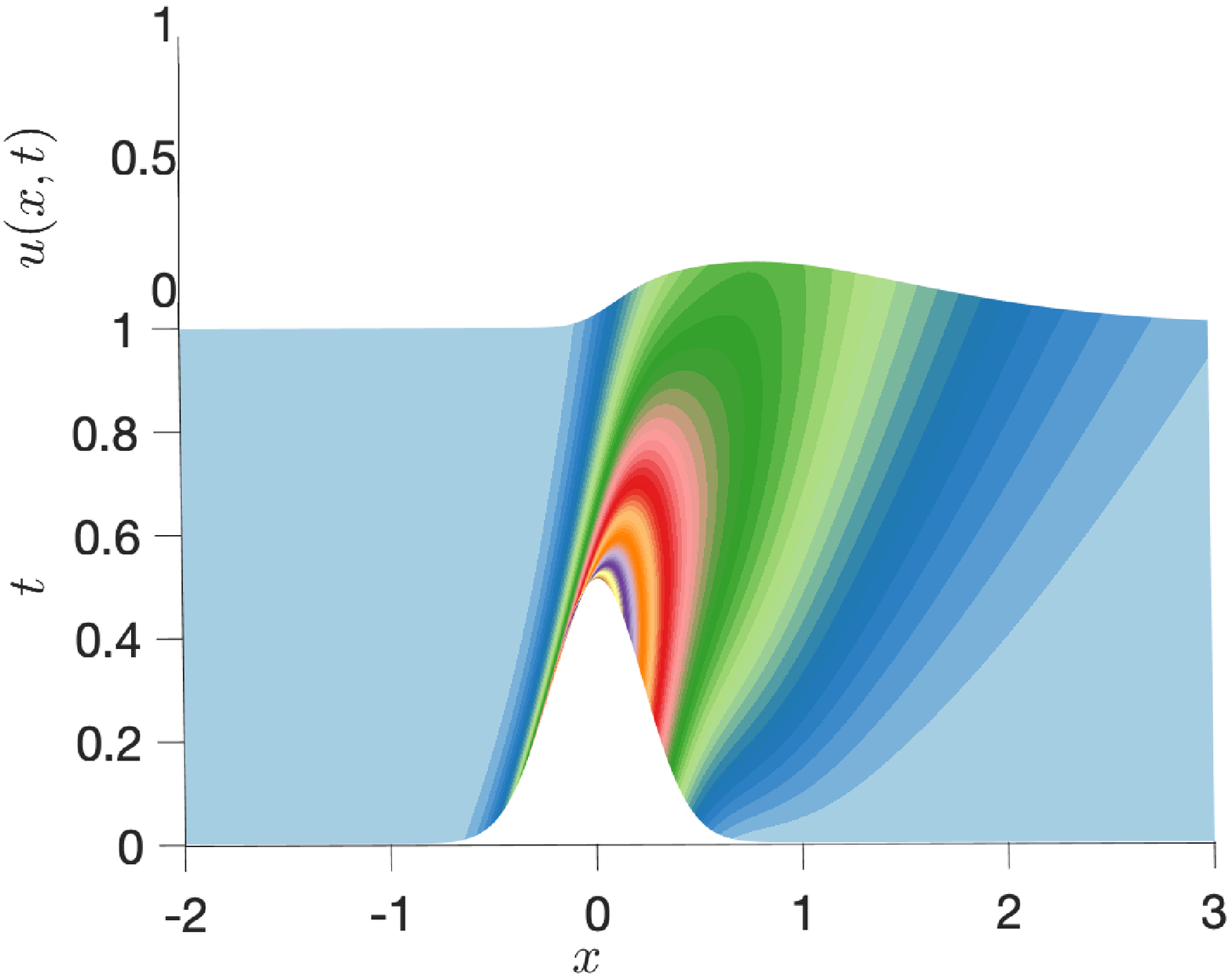}}
\caption{Wave propagation corresponding to different choices of  smooth $\zeta=\zeta(x)$ with a smooth initial data: zoom-in 3D view.}
\label{fig3} %% label for entire figure
\end{figure}

\subsubsection{$\psi_0$ discontinuous, $\zeta$ smooth}
Now, we consider the case where horizon parameter  function $\zeta$ is smooth but the  initial data $\psi_0$ is discontinuous, corresponding to the 
situation given in theorem \ref{thm:jump-u}. We can use the same numerical method as equation \eqref{eq1} to simulate the numerical solution of $[u]$.

Here we choose
$$H(s) = 20 e^{-10s^2}, \quad \zeta(x)=\erfc(-x)$$
and
$$\psi_0(x)=\left\{\begin{array}{cc}
\frac{1}{p}, & -p<x<0 \\
-\frac{1}{p}, & 0 \leqslant x<p \\
0, & \text { otherwise, }
\end{array}\right.$$
for some given parameter $p>0$.

Even though the simulations are carried out on a much larger spatial domain to ensure that the accuracy of the solutions are maintained with the domain truncation,  we only plot part of the solutions for $x\in (-2, 4)$ and  $t\in (0, 2)$ or  $t\in (0, 1)$.  Likewise, we also carried out
 simulations for various values of $p$ but we present the case with $p=1$ mostly as the representative case.
As comparisons, we also plot the solutions corresponding to the nonlocal model with a constant horizon parameter $\zeta(x)\equiv 0.1$
and the local limit (with $\zeta(x)\equiv 0$ effectively). 

Figure \ref{fig17} shows the top view of the numerical simulation results of  $u$ for $p=1.0$ 
obtained using the numerical scheme \eqref{sc2}
 for  $t\in (0, 2)$, while Figure \ref{fig18} gives the 3D view of the solutions for $t\in (0,1)$.
 The discontinuities of the initial data are at $x=0$ and $x=\pm 1$.
 For the nonlocal horizon function $\zeta(x)=\erfc(-x)$, we note that $\zeta(-1)$ is around $0.1573$, $\zeta(0)=1$ while $\zeta(1)$ is  about $1.8427$.
 So the nonlocal effects are much stronger at $x=0$ and $x=1$. Indeed, 
the presence of stationary-in-time spatial discontinuities of the solutions can be captured by 
  abrupt and sharp changes in color densities  along vertical lines in the plots, see  in particular Figure \ref{fig17} (a) at $x=0$ and $x=1$ for most visible results.  Since the jumps decay exponentially in time, the discontinuities eventually become unnoticeable.
  Nonlocal effects are less evident for smaller horizon function values, so that the stationary-in-time discontinuities would only be visible for a very short period of time, as one can see for $x=-1$ in Figure \ref{fig17} (a). This can also be observed for all spatial positions in Figure \ref{fig17} (b). 
On the other hand, for the local model, the discontinuities of the solutions travel along characteristic lines, and they maintain
nearly the same magnitude, except for the dissipation due to numerical viscosity. 
  These  numerical results reported for this example are  consistent with our analytical study in the previous section.

 \begin{figure}[htb]
\centering
\subfigure[$\zeta(x)=\erfc(-x)$]{
\includegraphics[width=4.3cm]{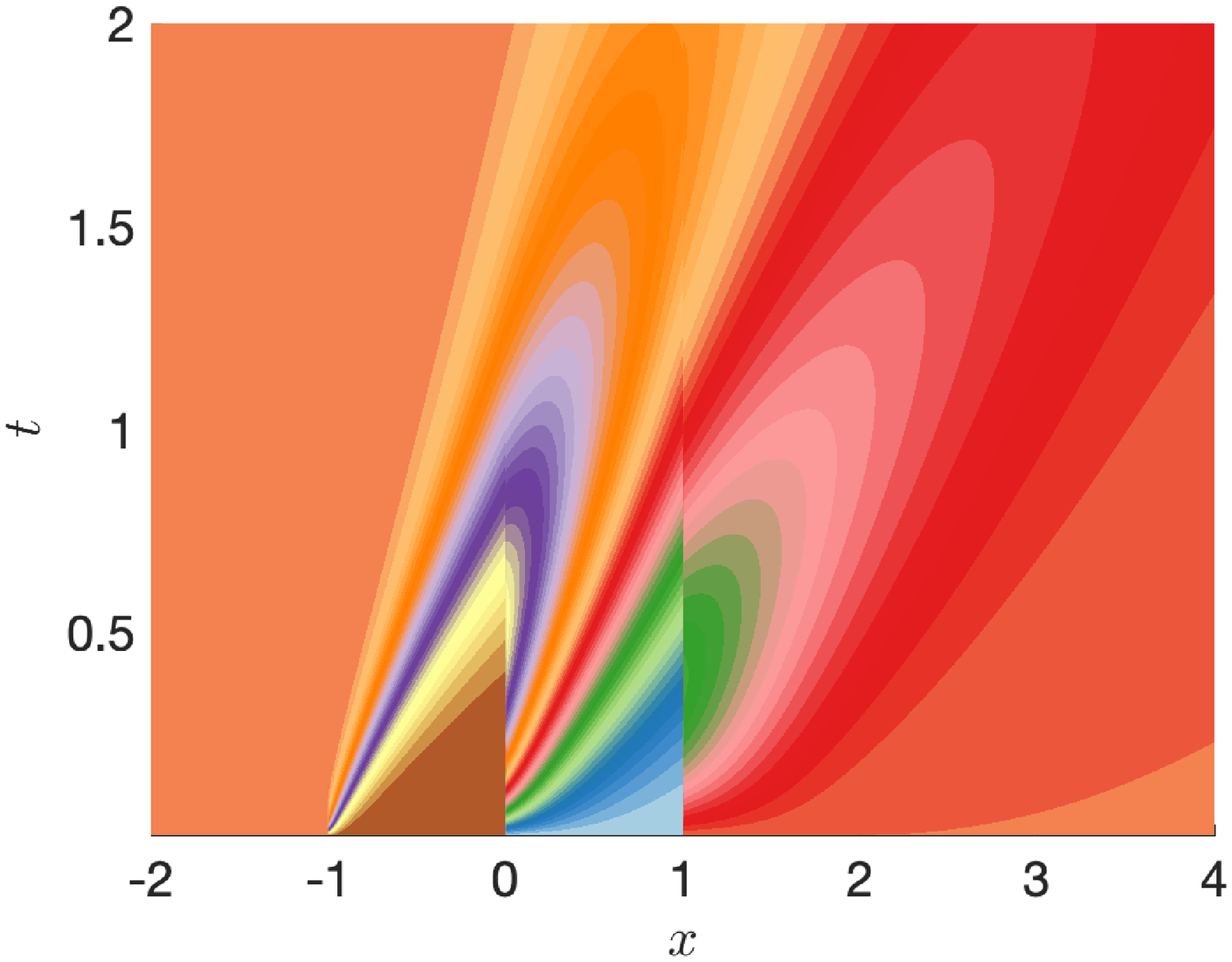}}
\subfigure[$\zeta(x)\equiv 0.1$]{
\includegraphics[width=4.3cm]{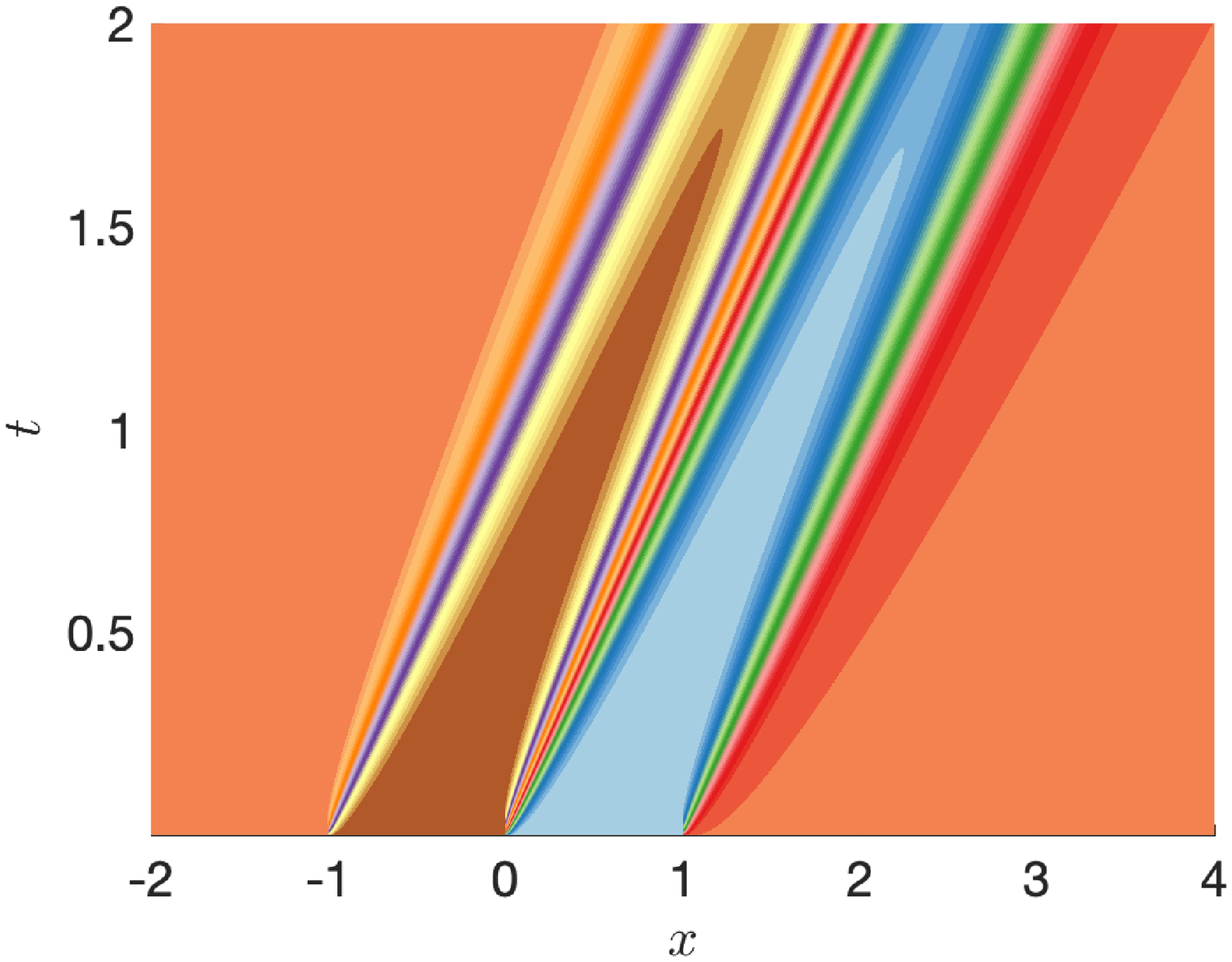}}
\subfigure[$\zeta(x)\equiv 0$]{
\includegraphics[width=4.3cm]{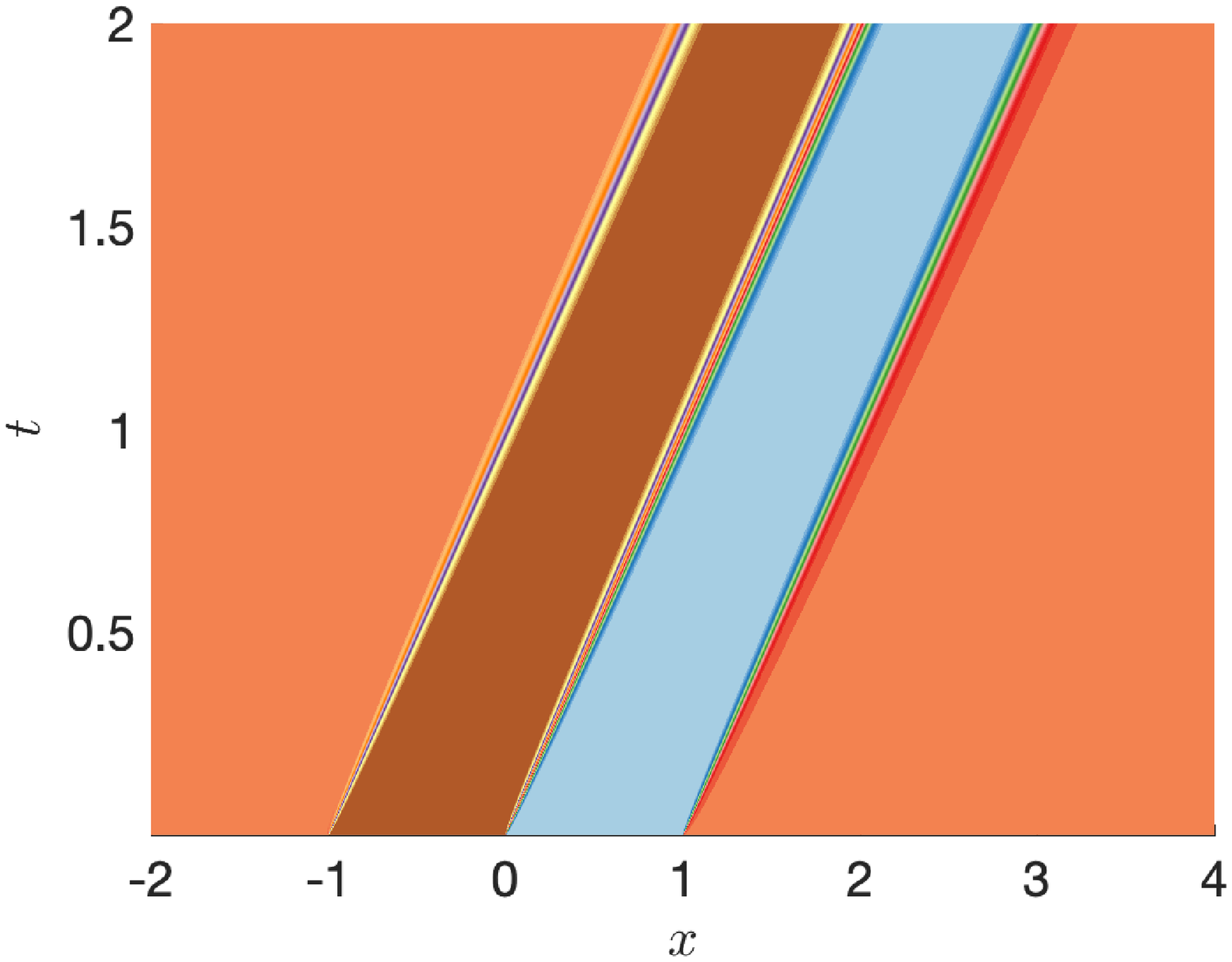}}
\caption{Wave propagation with  $\zeta(x)=\erfc(-x)$, $\zeta(x)\equiv 0.1$ and $\zeta(x)\equiv 0$ respectively: top view for $t\in (0,2)$.}
\label{fig17} %% label for entire figure
\end{figure}

\begin{figure}[htb]
\centering
\subfigure[$\zeta(x)=\erfc(-x)$]{
\includegraphics[width=4.3cm]{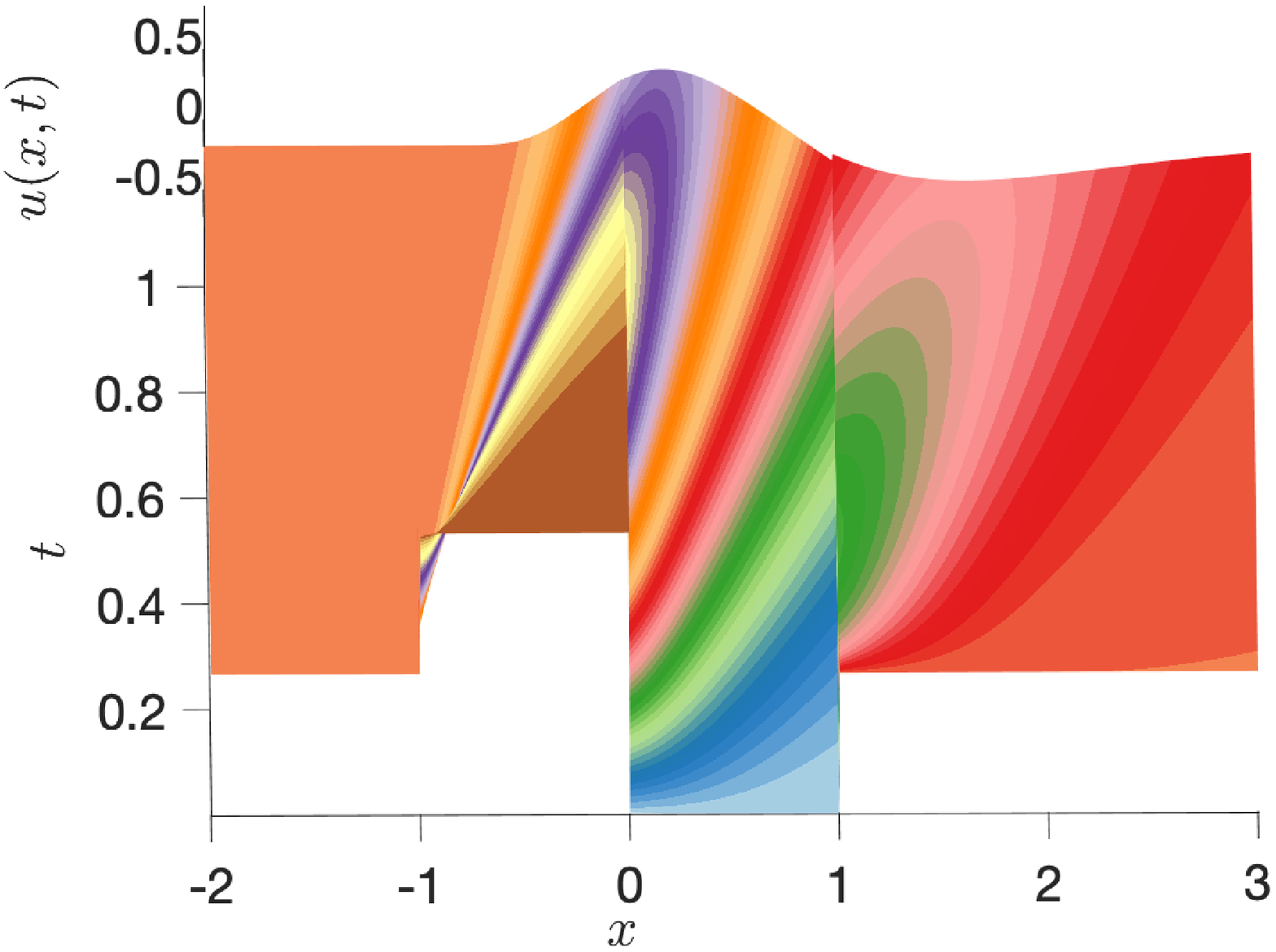}}
\subfigure[$\zeta(x)\equiv0.1$]{
\includegraphics[width=4.3cm]{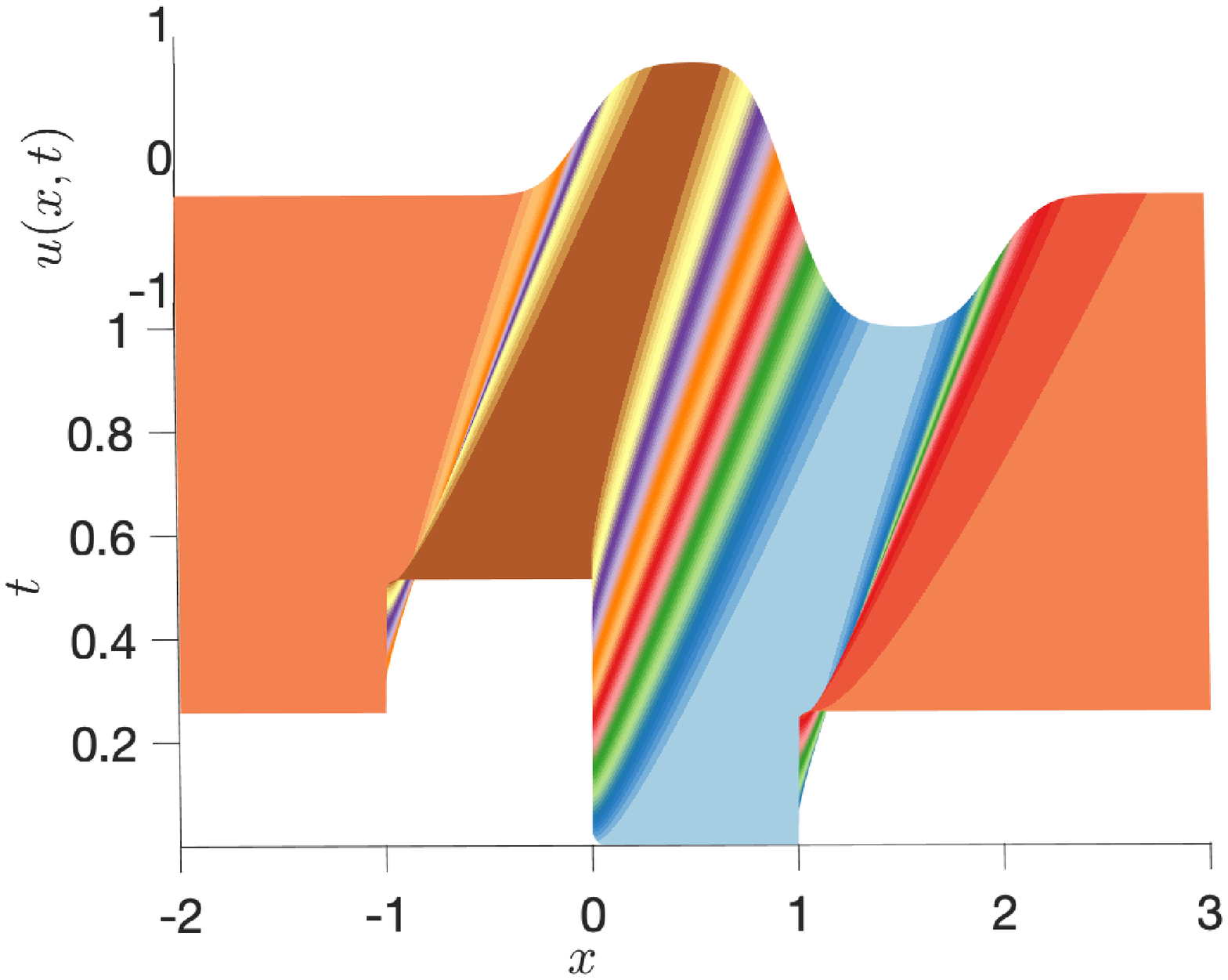}}
\subfigure[$\zeta(x)\equiv 0$]{
\includegraphics[width=4.3cm]{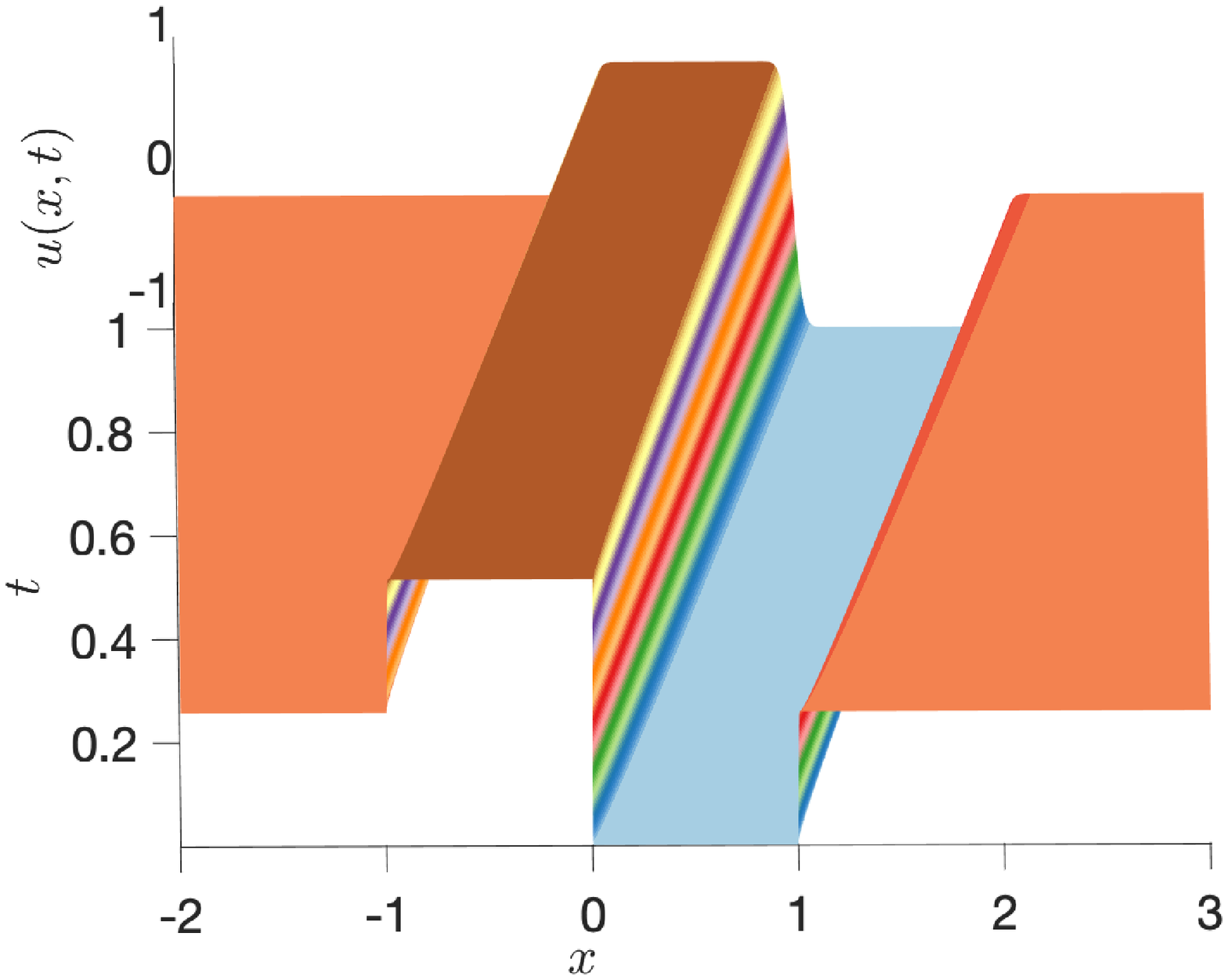}}
\caption{Wave propagation with  $\zeta(x)=\erfc(-x)$, $\zeta(x)\equiv 0.1$ and $\zeta(x)\equiv 0$ respectively: 3D view up to  $t\in (0,1)$.}
\label{fig18} %% label for entire figure
\end{figure}

{To highlight the propagation of singularities, In Figure \ref{fig21} and \ref{fig22}, the time evolution of $ \bm{[}u\bm{]}(x,t)$  are plotted at a given positions $x$ in different time intervals $(0,t)$ for different choices of  horizon functions.  The results are obtained
by the two approaches mentioned in Section \ref{sec:4-1} are both presented to allow comparison and cross-validation.
The blue curves in Figure \ref{fig21} correspond to results obtained by the 
 numerical evaluation of \eqref{eq:jump-u1}, while the red curves are obtained from the numerical solutions of equation  \eqref{eq1}, shown in  the Figure \ref{fig22}. }
\begin{figure}[htb]
\centering
\subfigure[$\zeta(x)=\erfc(-x)$]{
\includegraphics[width=4.3cm]{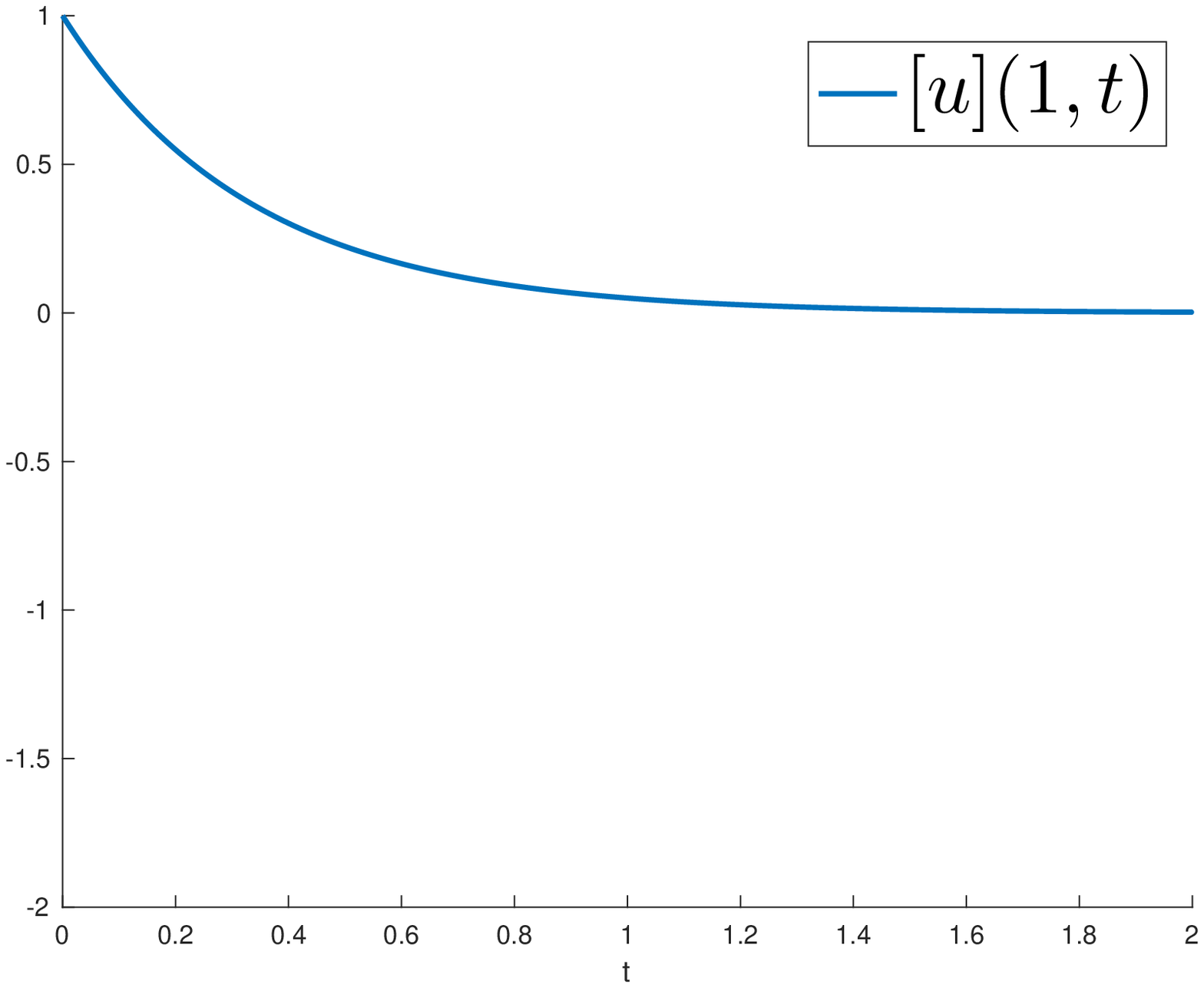}}
\subfigure[$\zeta(x)\equiv 0.1$]{
\includegraphics[width=4.3cm]{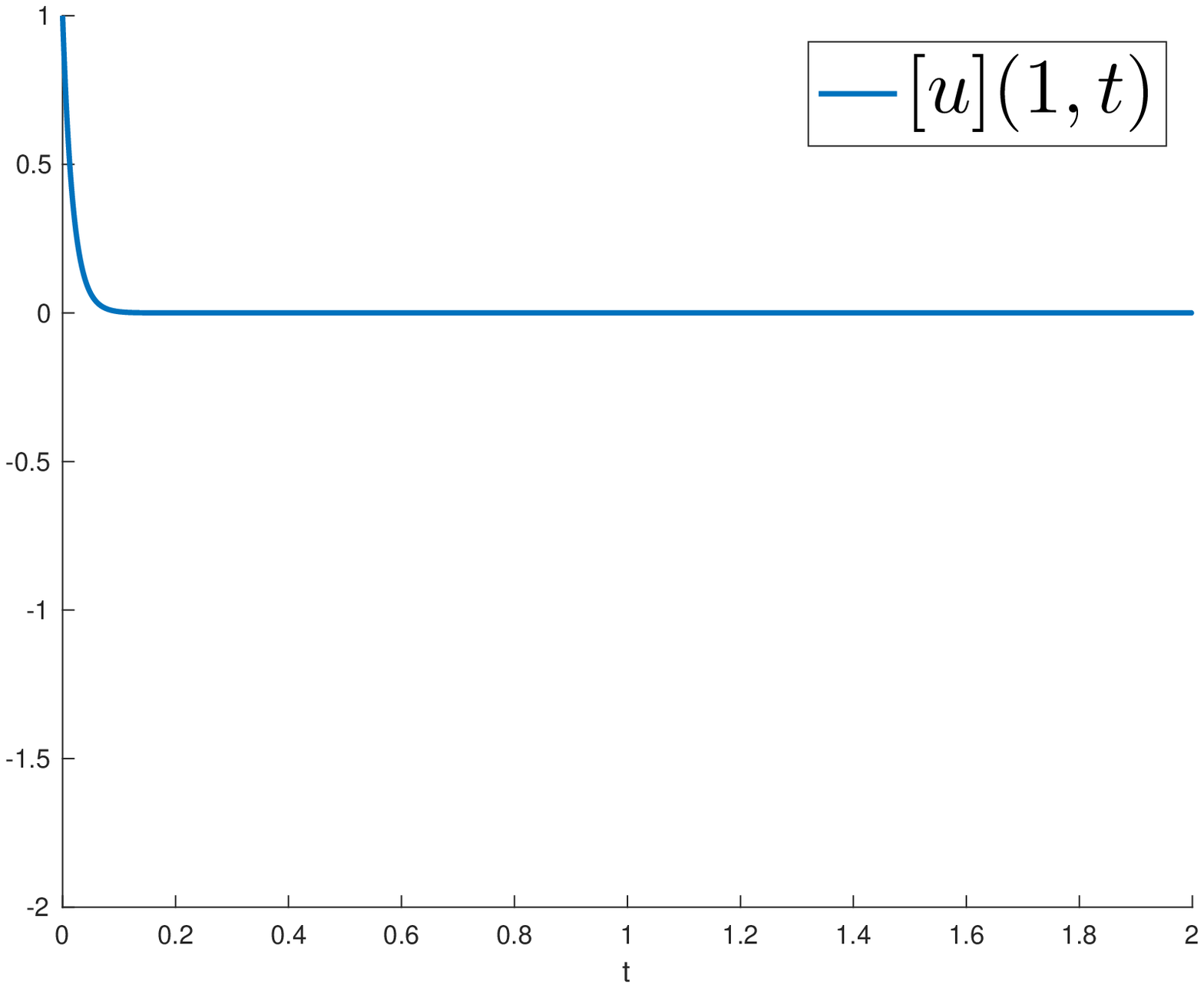}}
\subfigure[$\zeta(x)\equiv 0$]{
\includegraphics[width=4.3cm]{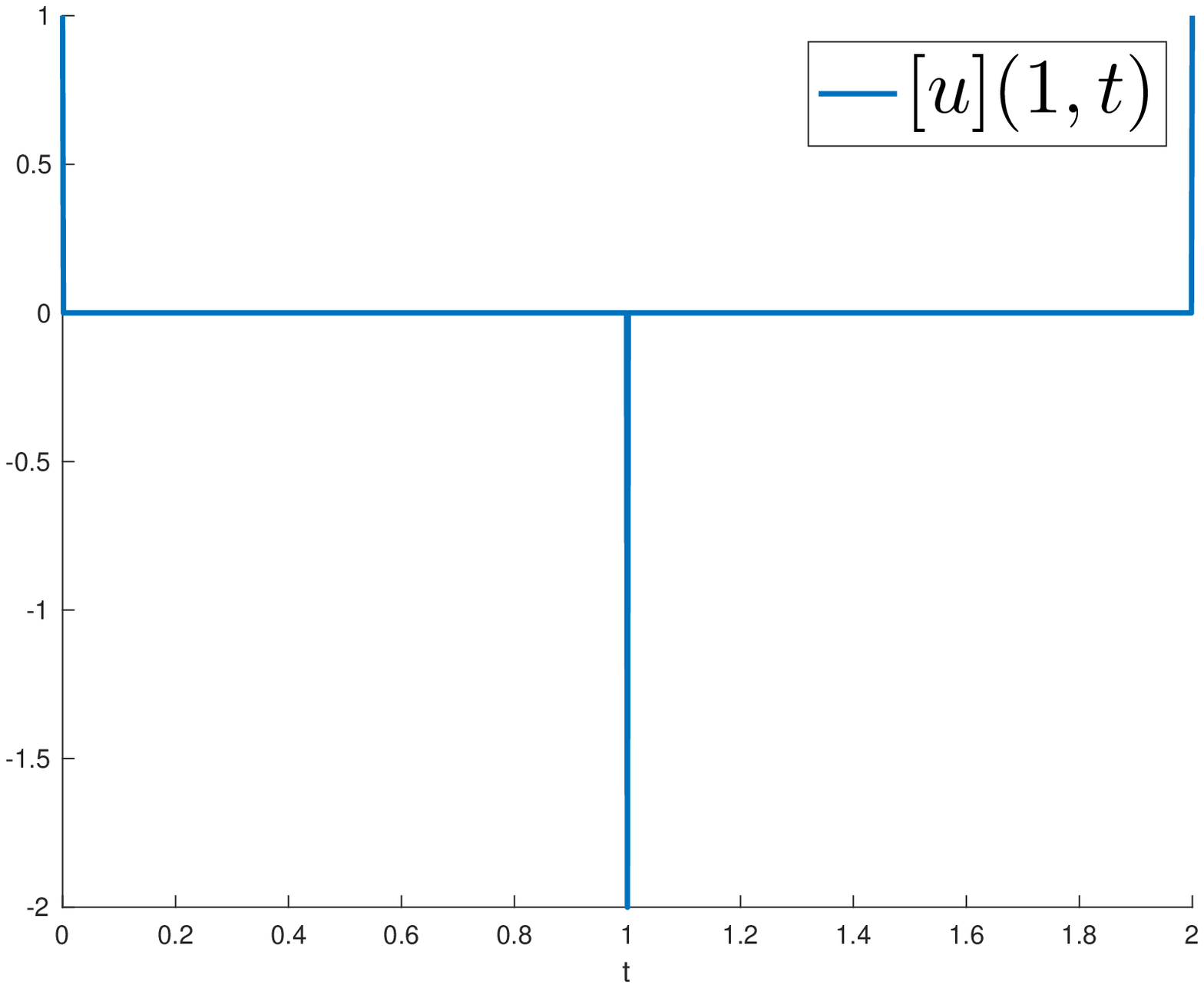}}
\caption{Plot of $ \bm{[}u\bm{]}(1,t)$  corresponding to different horizon up to $t \in (0, 2)$ by method \ref{method 2}}
\label{fig21} %% label for entire figure
\end{figure}
\begin{figure}[htb]
\centering
\subfigure[$\zeta(x)=\erfc(-x)$]{
\includegraphics[width=4.3cm]{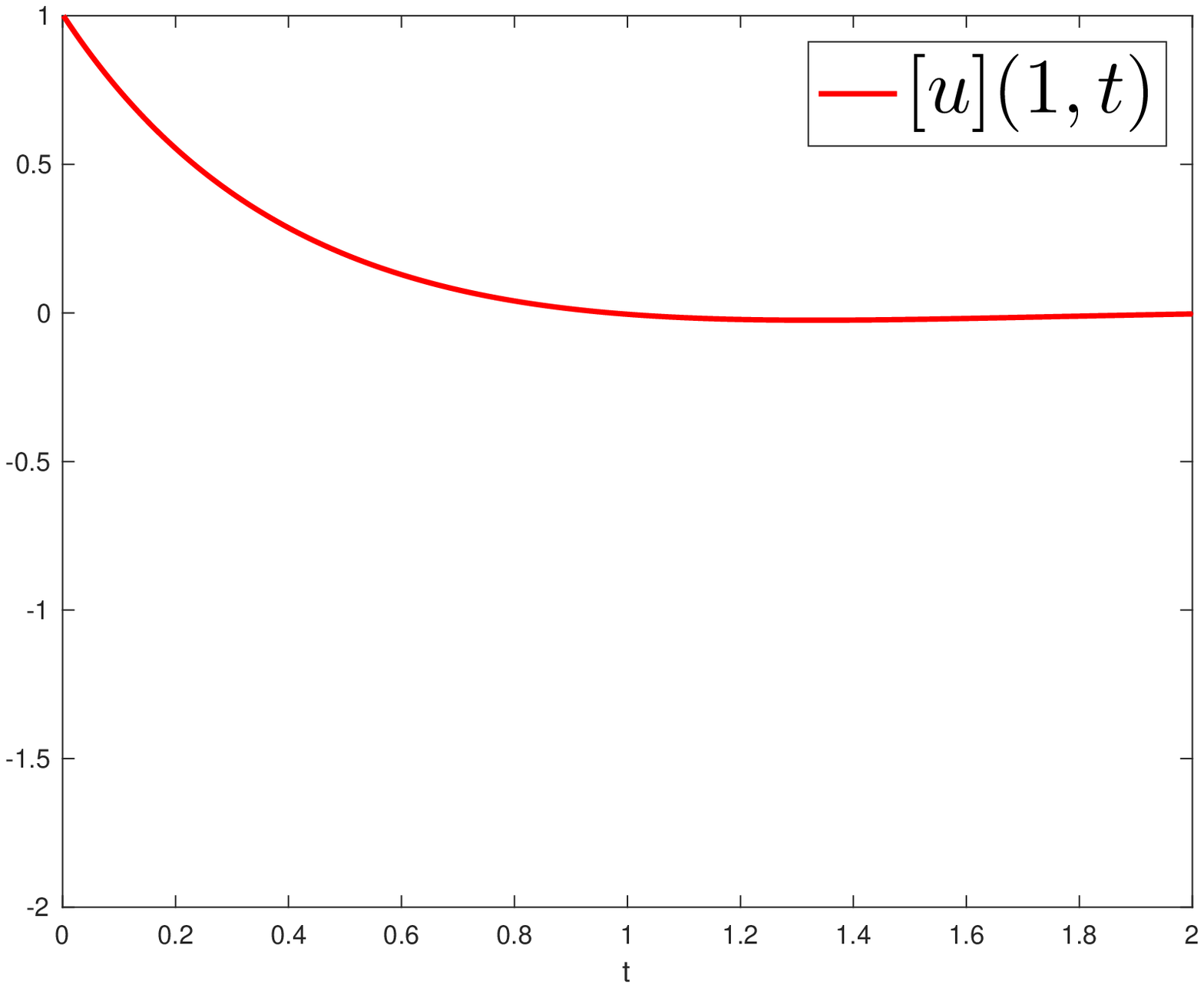}}
\subfigure[$\zeta(x)\equiv 0.1$]{
\includegraphics[width=4.3cm]{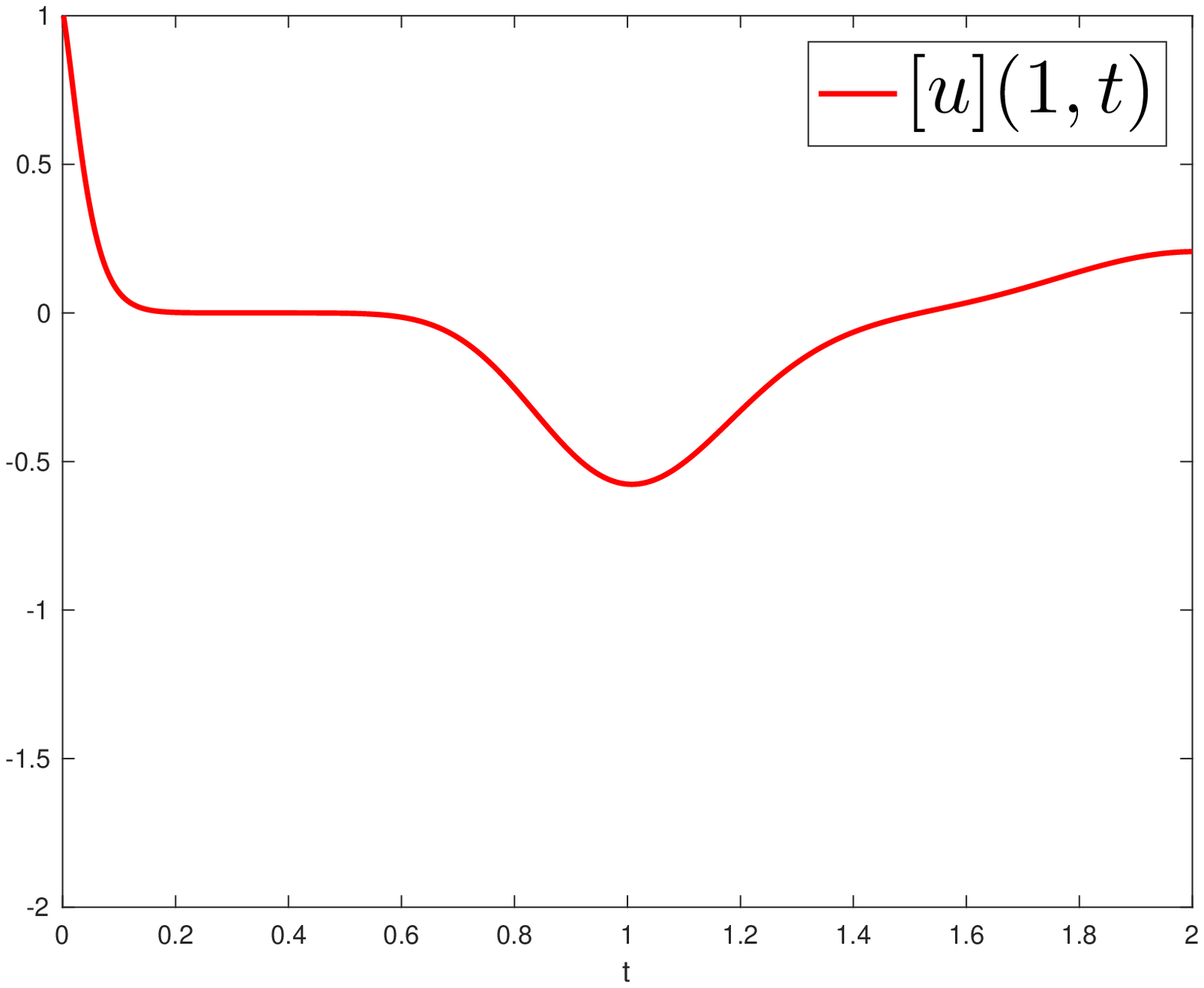}}
\subfigure[$\zeta(x)\equiv 0$]{
\includegraphics[width=4.3cm]{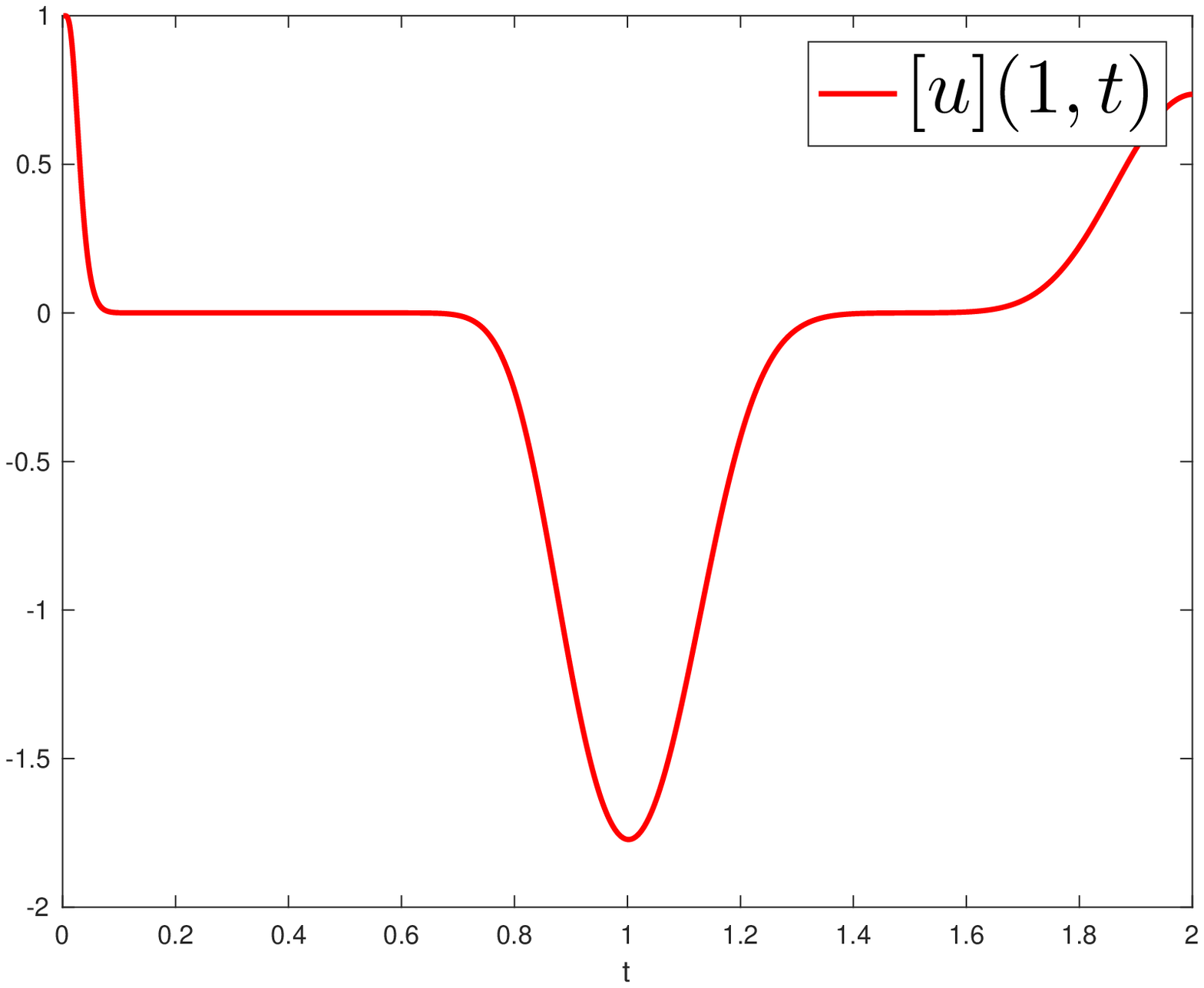}}
\caption{Plot of $ \bm{[}u\bm{]}(1,t)$ corresponding to different horizon  up to $t \in (0, 2)$ by method \ref{method 1}}
\label{fig22} %% label for entire figure
\end{figure}

{From Figure \ref{fig21} and Figure \ref{fig22}, we can find that the results obtained by the two methods are  generally consistent with each other, up to the numerical viscous effect,  and are consistent with the conclusion in the Theorem \ref{thm:jump-u}. In the nonlocal model, corresponding to $(a)$ in Figures \ref{fig21} and  \ref{fig22},  $\bm{[}u\bm{]}$ monotonically changes with $t$ to 0. 

The plots in $(c)$ of Figures \ref{fig21} and  \ref{fig22} correspond to the local model, 
It should be noted that (c) in Figures \ref{fig21} draws the direct evaluation of $[u$], as obtained from the propagation along the characteristic line, while and the values of $[u$] in (c) of Figure \ref{fig22}  are obtained by calculating the jump given by the numerical solution, which is affected by the numerical dissipation. Still, we see that
$\bm{[}u\bm{]}$ decays rapidly to 0 at $x=1$, due to the  initial discontinuity at $x=1$  getting shifted over time, though a peak reappears later in time, due to the arrival of  the  initial discontinuity started at $x=0$ at time 0. 

In comparison, the plots  of Figures \ref{fig21}(b)  and  \ref{fig22}(b) correspond to the solution of nonlocal model with a small constant horizon parameter.
We can see the behavior of $\bm{[}u\bm{]}$ looks reasonably similar to the results of local model, because the influence of nonlocal effect is relatively weak,  even though the nonlocal model shows more pronounced dissipation as expected. 
Indeed, the results reported in Figures \ref{fig21}(b) follow the theoretical estimate and reflect the fast decay of the initial discontinuity, while the results presented in Figures \ref{fig22}(b) are based on the numerical simulations and differentiations which get influenced later on in time by the reappearance of the sharper transitions  (approximating the arrival of the initial discontinuity from $x=0$). In comparison, the results  based on the evaluation of \eqref{eq:jump-u1} is less affected by such effects of the numerical discretization.
}

\subsubsection{$\psi_0$ piecewise smooth, $\zeta$ smooth}
Now, we consider the case where horizon $\zeta$ is smooth but the  initial data $\psi_0$ is piecewise smooth, corresponding to the situation given in theorem \ref{thm:jump-ux}. 

Here we choose
$$H(s) = 20 e^{-10s^2}, \quad \zeta(x)=\erfc(-x)$$
and
\begin{align}
\label{eq:initalhat}
\psi_0(x)=\left\{\begin{array}{cc}
1-{|x|}/{p}, & -p<x<p, \\
%\frac{p-x}{p}, & x_0 \leqslant x<x_0+p \\
0, & \text { else, }
\end{array}\right. \quad\text{for a given parameter}\; p>0.
\end{align}
%$$\psi_0(x)=\left\{\begin{array}{cc}
%\frac{x-(x_0-p)}{p}, & x_0-p<x<x_0 \\
%\frac{x_0+p-x}{p}, & x_0 \leqslant x<x_0+p \\
%0, & \text { else }
%\end{array}\right. \quad\text{for a given parameter}\; p>0.$$

We pick $p=0.5$ and $p=1$ for illustration.
 In Figure \ref{fig10}, we can see that the initial data $\psi_0(x)$  selected here is  a continuous piecewise linear functions with different slopes. 
 Its derivative has three discontinuities at  $x=-1, 0$ and $1$ with the one at $x=-1$ being in the local region. Thus, we mostly focus on the latter ones.

 The results of the solutions on $x\in (-2,5)$ 
are shown in Figures \ref{fig11} for $t\in (0, 2)$ with a top view
 and Figures \ref{fig12}   for $t\in (0, 1)$ (a zoomed-in version of Figure \ref{fig11}, with a 3D view)
 to illustrate the behavior of the solution.  
The  visible "vertical lines" appearing in the contours could signal potential singularities. However, as the colors on the two sides of a "vertical line"  have a 
much less dramatic transition, we can see that the lines are capturing the "folds" in the solutions, that is, the discontinuities of $u_x$, 
as we expect. We can see the appearance of these "folding lines" at $x = 0$ and $x = p$.
% And we notice that the vertical lines at $x = 0$ and $x = p$ seem to be different.
The initial Hat function gets smeared as time goes on, due to the dissipative effect present in the nonlocal equations. Also,  as the changes of slopes in the initial data become larger (as $p$ gets smaller, from the right plot to the left plot), the folding line shows up more dramatically. 

 \begin{figure}[htb]
\centering
{
\includegraphics[width=7.5cm]{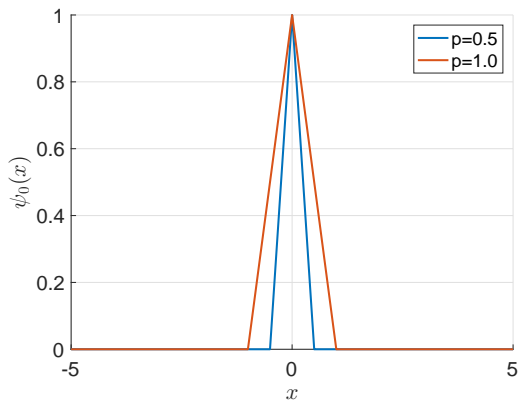}}
\caption{Plot of different choices of piecewise smoothly defined $\psi_0(x)$ for $x_0 = 0$}
\label{fig10} %% label for entire figure
\end{figure}

\begin{figure}[htb]
\centering
\subfigure[$p = 0.5$]{
\includegraphics[width=6.5cm]{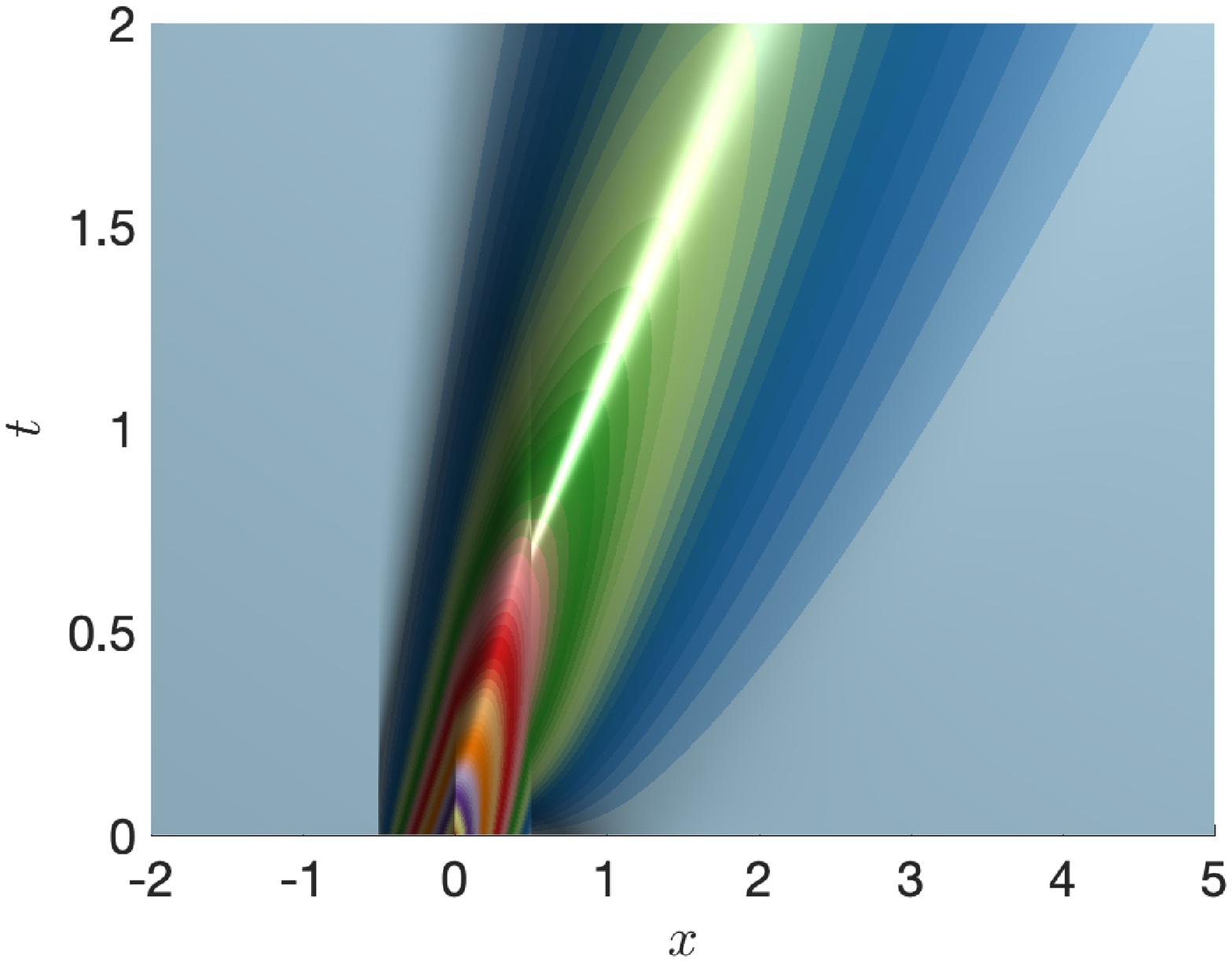}}
\subfigure[$p = 1.0$]{
\includegraphics[width=6.5cm]{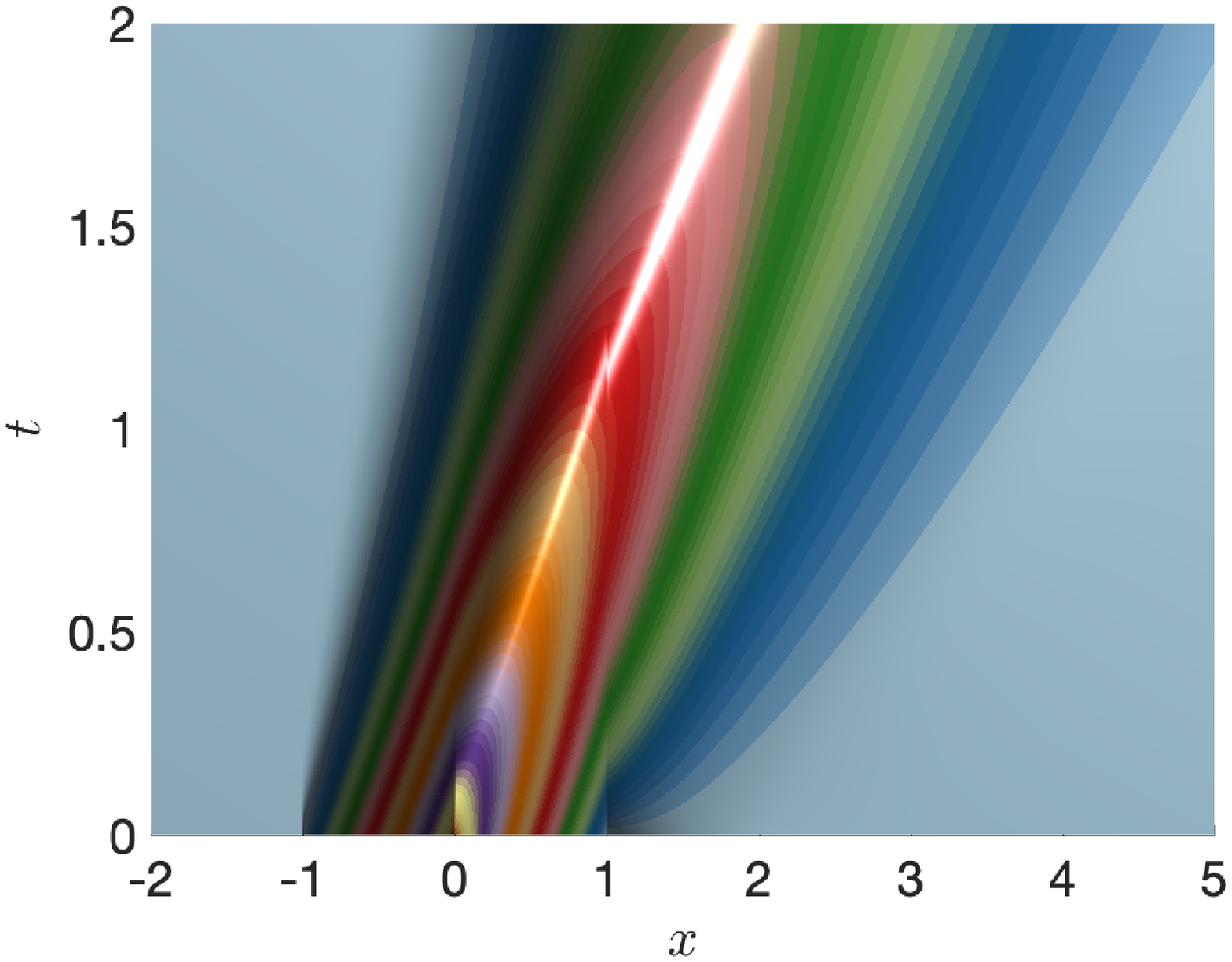}}
\caption{Wave propagation  corresponding to different choices of $p$ for piecewise smooth initial data $\psi_0(x)$ with a smooth horizon: top view}
\label{fig11} %% label for entire figure
\end{figure}

\begin{figure}[htb]
\centering
\subfigure[$p = 0.5$]{
\includegraphics[width=6.5cm]{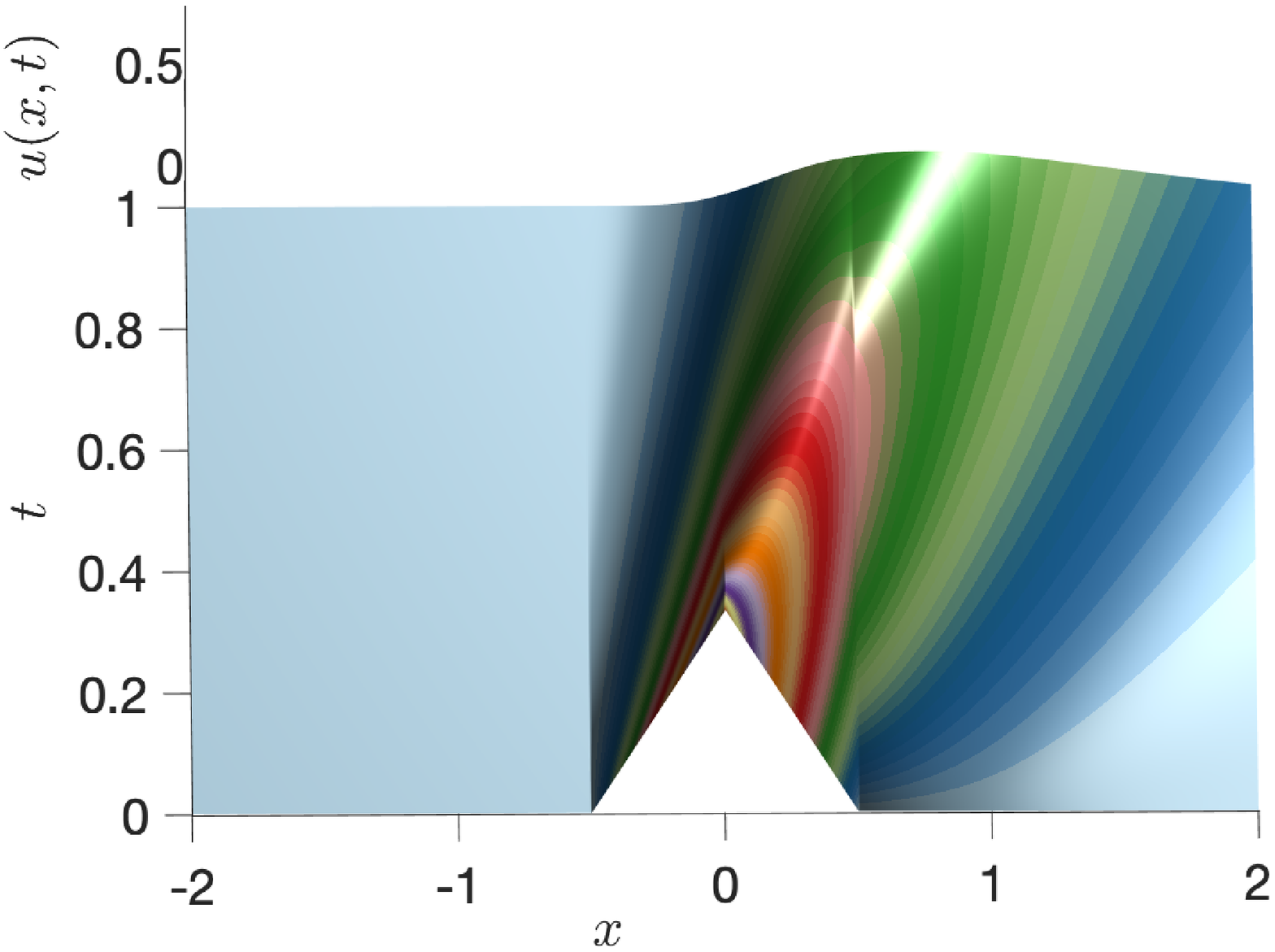}}
\subfigure[$p = 1.0$]{
\includegraphics[width=6.5cm]{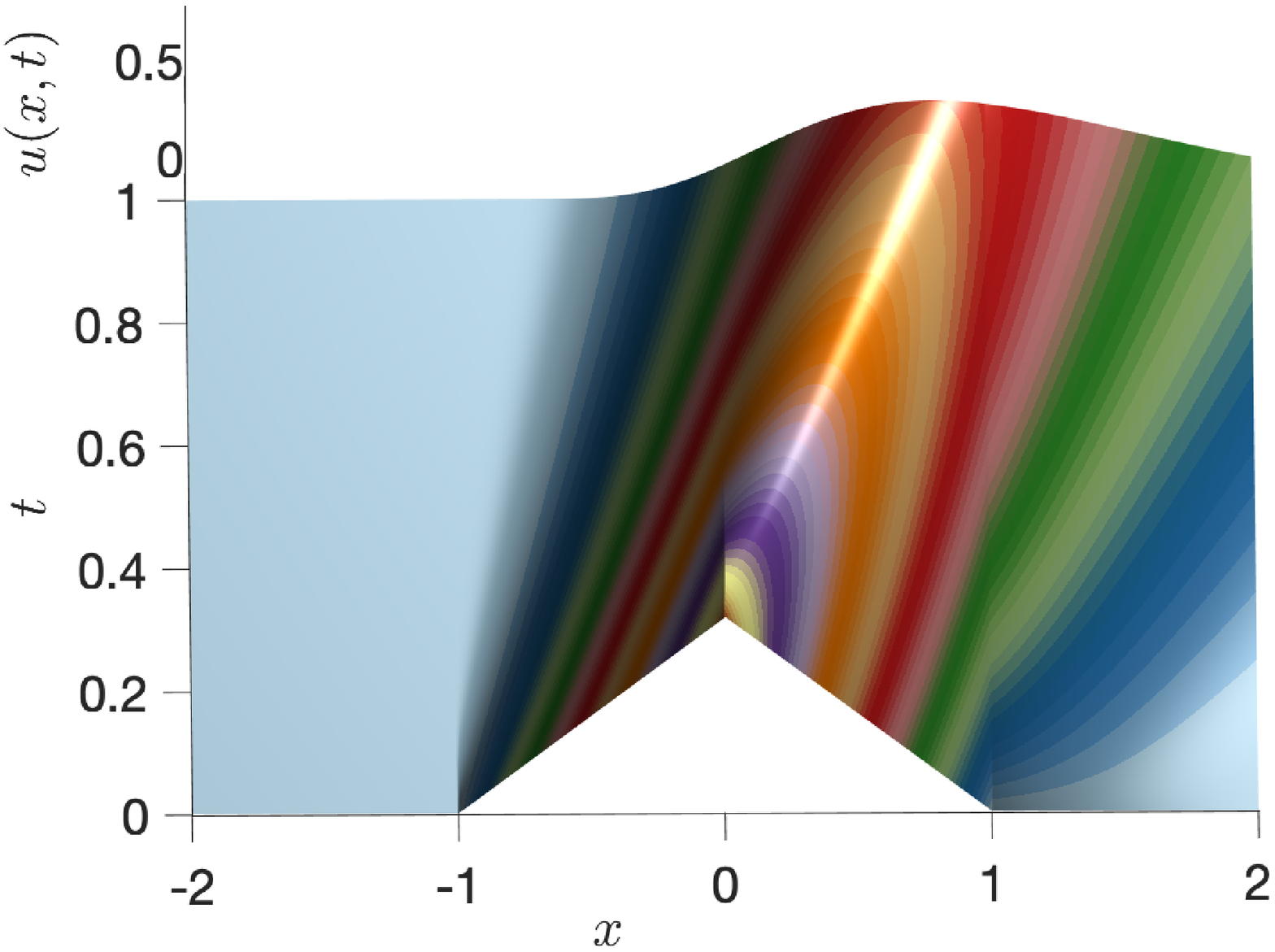}}
\caption{Wave propagation  corresponding to different choices of $p$ for piecewise smooth initial data $\psi_0(x)$ with a smooth horizon: zoom-in 3D view}
\label{fig12} %% label for entire figure
\end{figure}

In Figures \ref{fig13} and \ref{fig14}, we plot the evolution of $ \bm{[}u_x\bm{]}(x,t)$ in time for different horizon by the two numerical methods mentioned before. Results obtained from \eqref{eq:jump-ux1} correspond to the blue curves while red curves are from the jumps of difference quotients of numerical solutions. From Figures \ref{fig13} and \ref{fig14}, we can find that $\bm{[}u_x\bm{]}$ monotonically diminishes to zero at $x = 0$ and  $x = p$ as described by equation \eqref{eq:jump-ux}, leading to consistent results obtained in the analytical estimates and numerical solutions in Figures \ref{fig13} and \ref{fig14}. Moreover, comparing the plots in $(a)$ and $(c)$, and those in $(b)$ and $(d)$, among these figures, we can find that the results obtained by the two methods are basically the same, which is consistent with our expectations and helps to validate the numerical findings.
\begin{figure}[htb]
\centering
\subfigure[$\bm{[}u_x\bm{]}(0,t)$ by method \ref{method 1}]{
%: simulating equation \eqref{eq:jump-ux1} with  
\includegraphics[width=5.3cm]{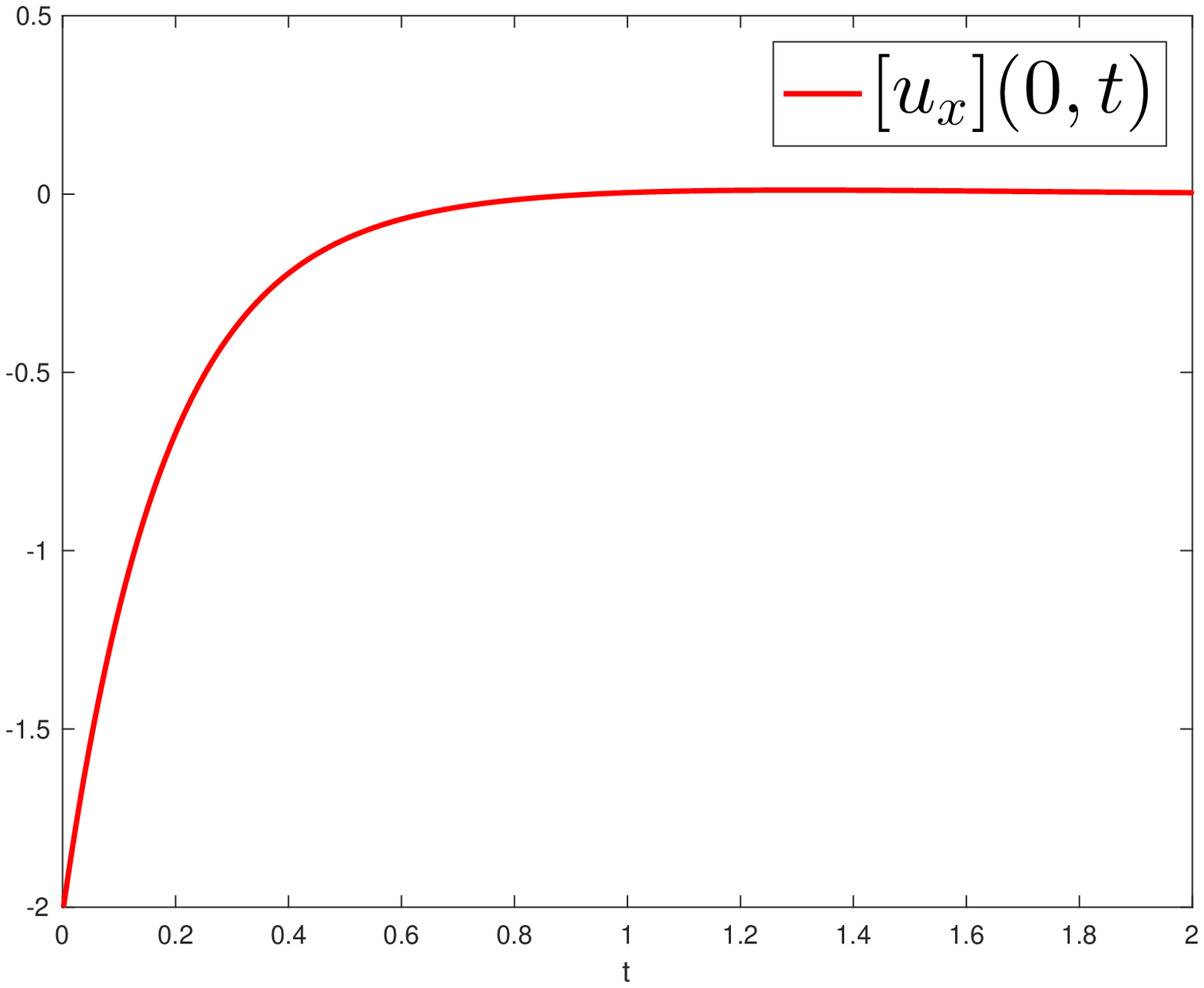}\qquad}
\subfigure[$\bm{[}u_x\bm{]}(0,t)$ by method \ref{method 2}]{
\includegraphics[width=5.3cm]{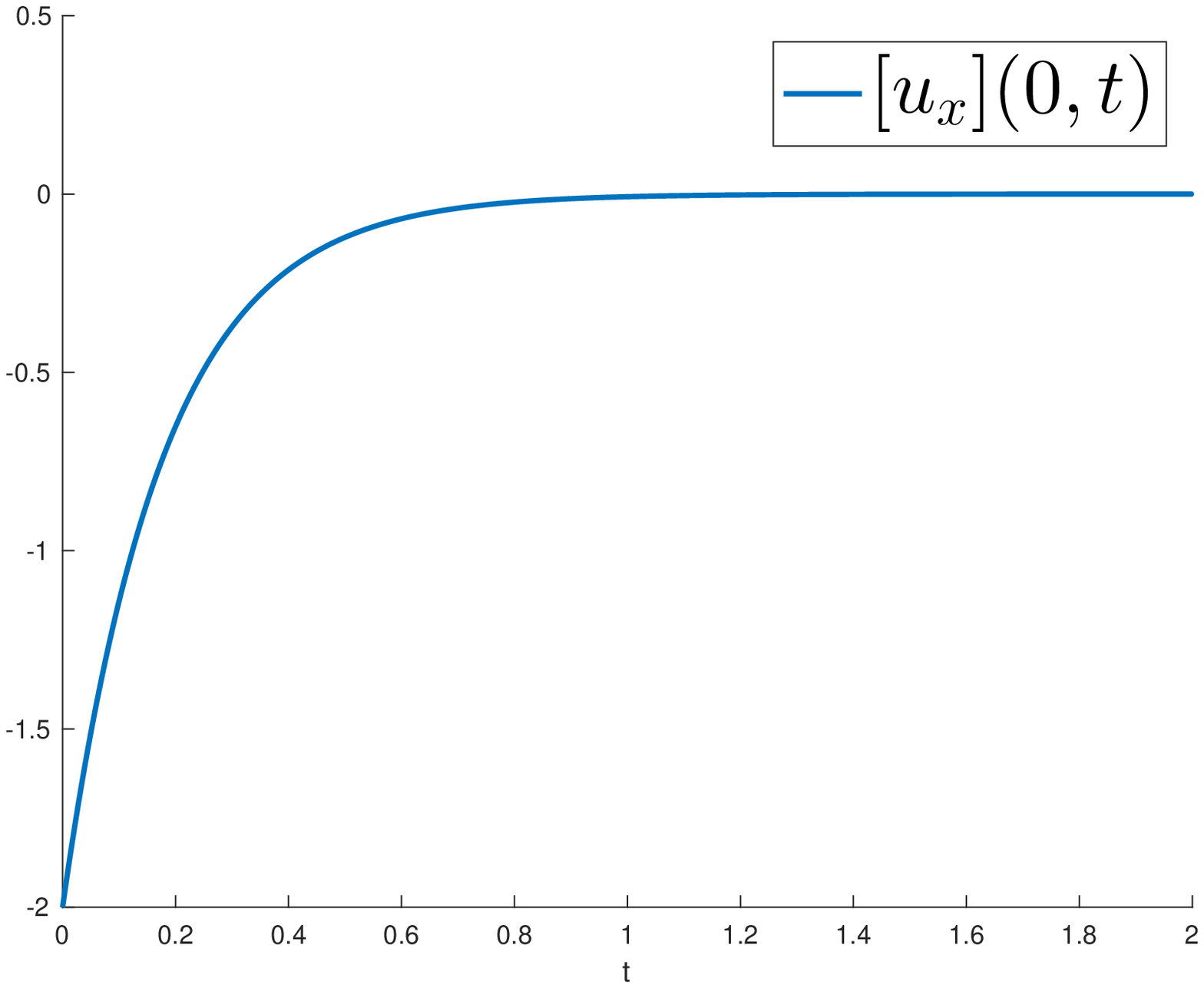}\qquad}\\
%method \ref{method 2}: Jump of difference quotients of numerical solutions with $x = 0$]{
\subfigure[$\bm{[}u_x\bm{]}(1,t)$ by method \ref{method 1}]{
%method \ref{method 1}: simulating equation \eqref{eq:jump-ux1} with $x = 1$]{
\includegraphics[width=5.3cm]{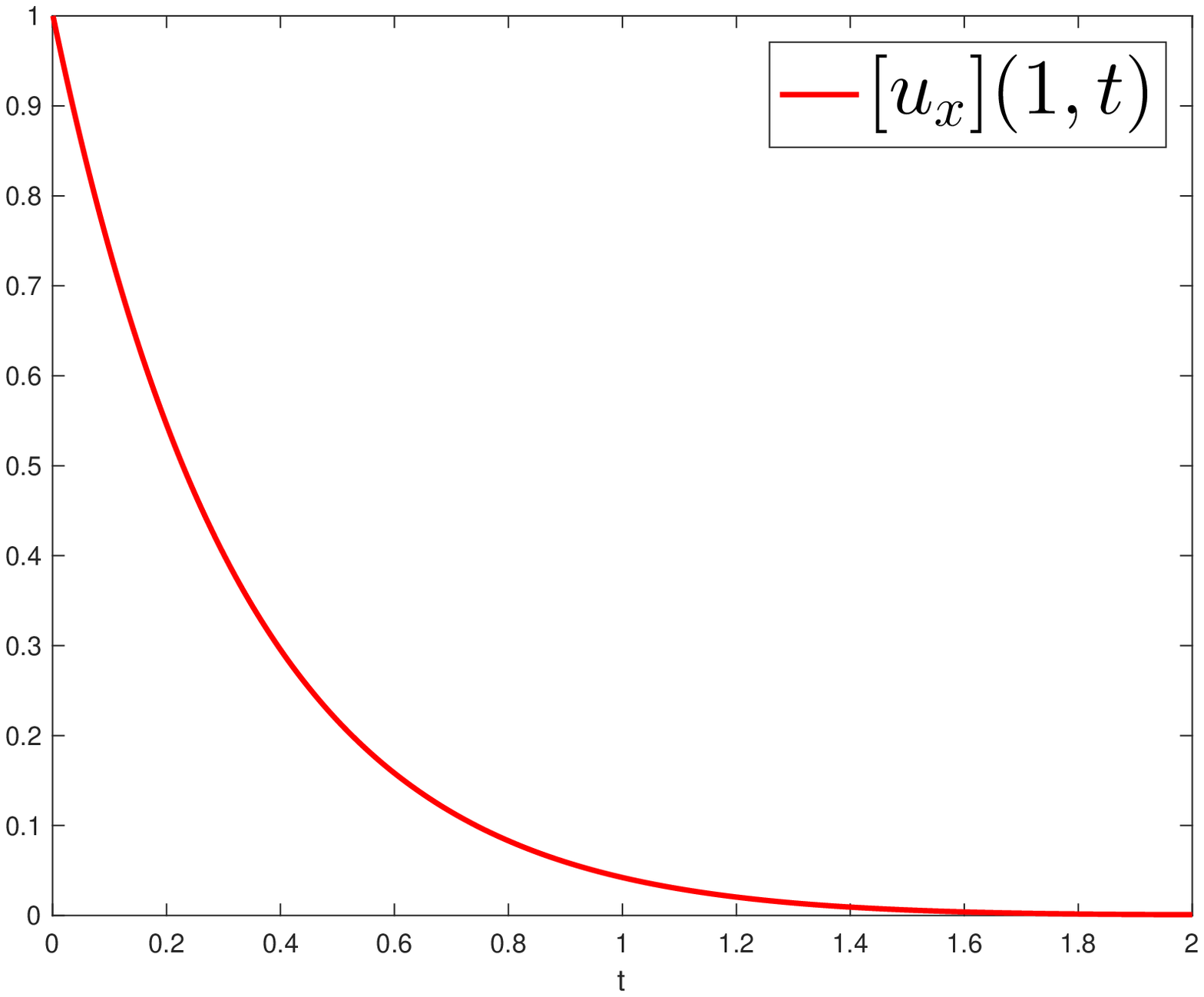}\qquad}
\subfigure[$\bm{[}u_x\bm{]}(1,t)$ by method \ref{method 2}]{
%method \ref{method 2}: Jump of difference quotients of numerical solutions with $x = 1$]{
\includegraphics[width=5.3cm]{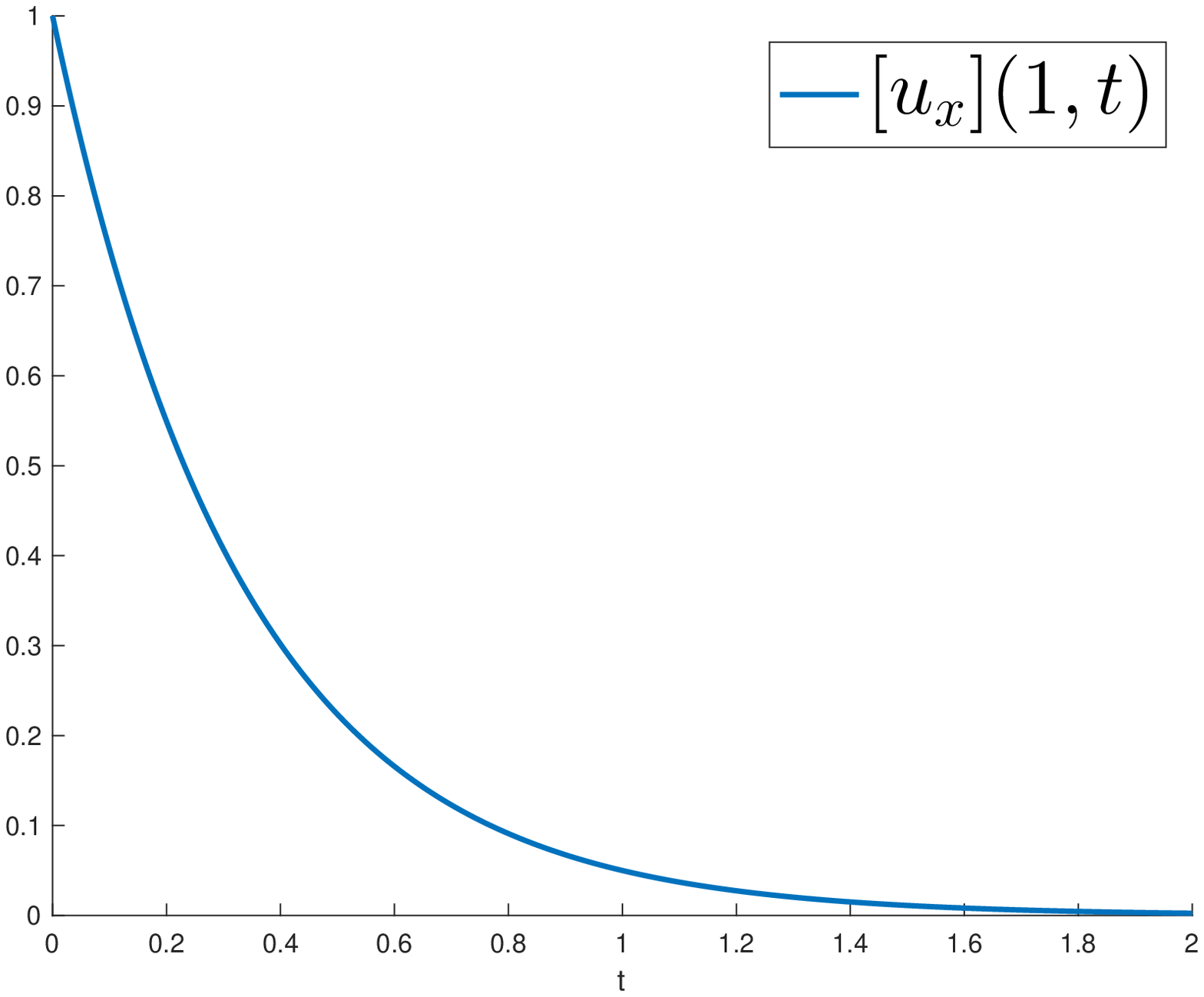}\qquad}
\caption{Plots of $ \bm{[}u_x\bm{]}(x,t)$ computed by different methods with $p = 1$ and $x = 0$ and $1$ for $t\in(0, 2)$.}
\label{fig13} %% label for entire figure
\end{figure}

\begin{figure}[htb]
\centering
\subfigure[$\bm{[}u_x\bm{]}(0,t)$ by method \ref{method 1}]{
%method \ref{method 1}: simulating equation \eqref{eq:jump-ux1}]{
\includegraphics[width=5.3cm]{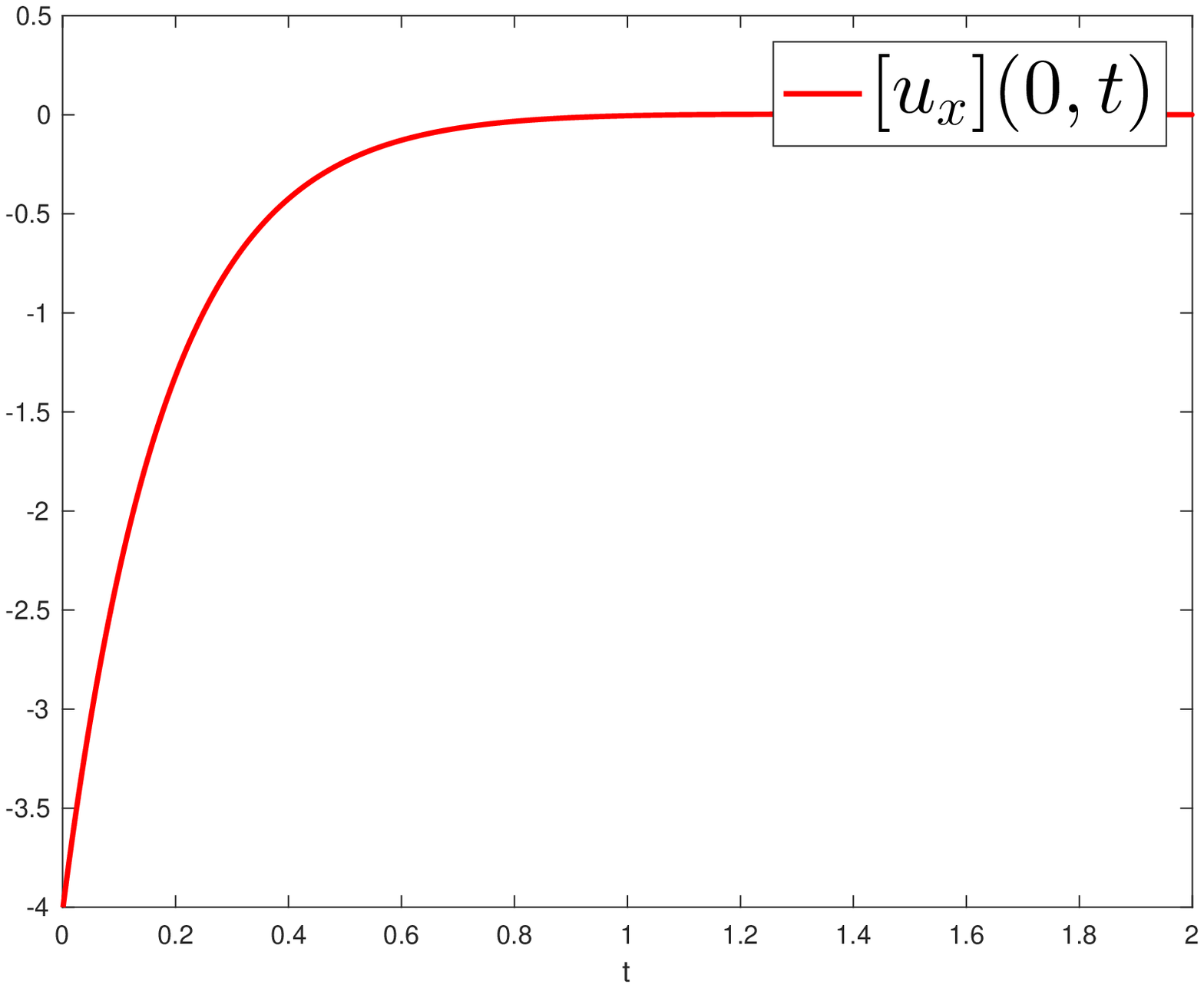}\qquad}
\subfigure[$\bm{[}u_x\bm{]}(0,t)$ by method \ref{method 2}]{
%method \ref{method 2}\ref{method 2}: Jump of difference quotients of numerical solutions]{
\includegraphics[width=5.3cm]{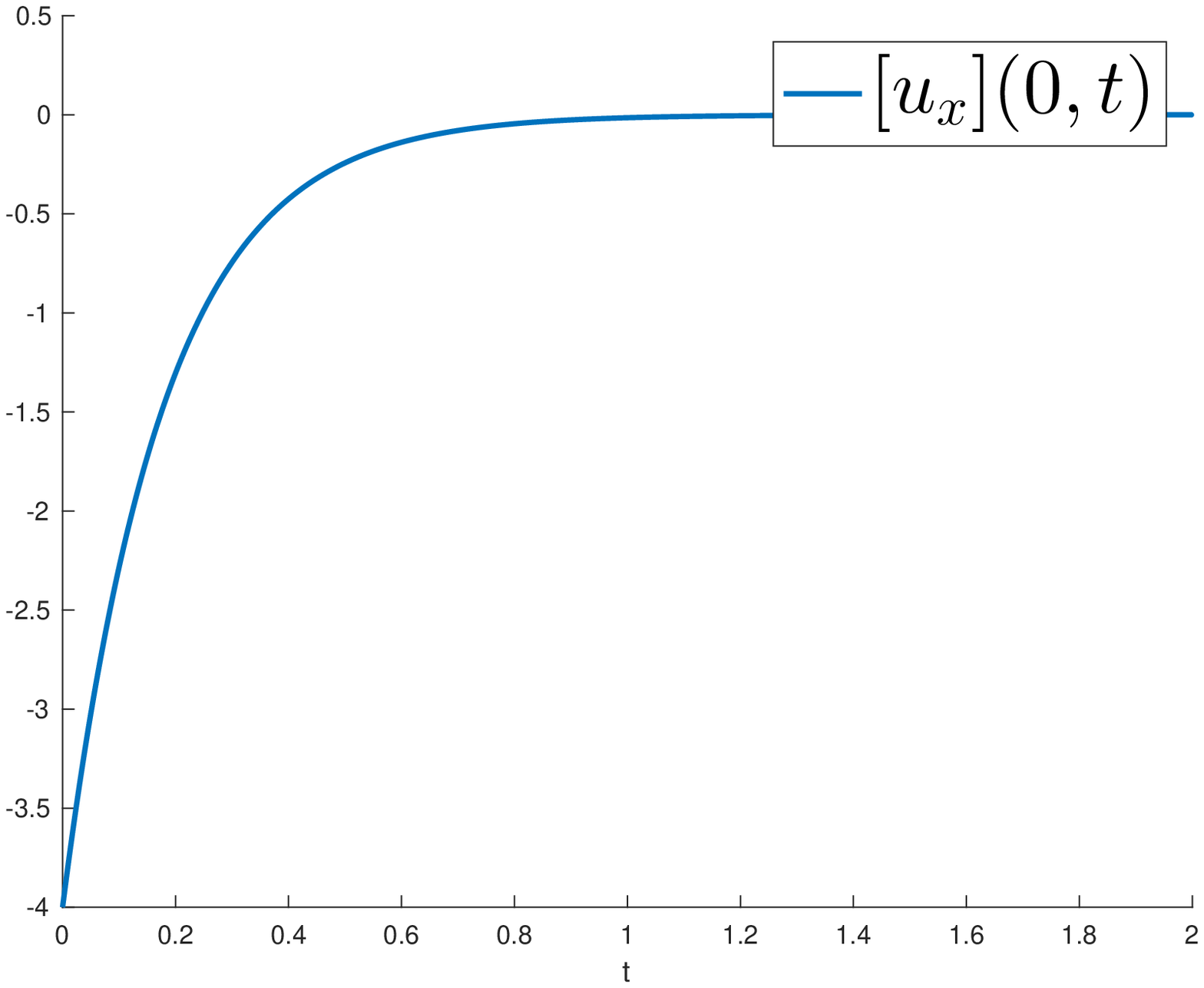}\qquad}\\
\subfigure[$\bm{[}u_x\bm{]}(0.5,t)$ by method \ref{method 1}]{
%method \ref{method 1}: simulating equation \eqref{eq:jump-ux1}]{
\includegraphics[width=5.3cm]{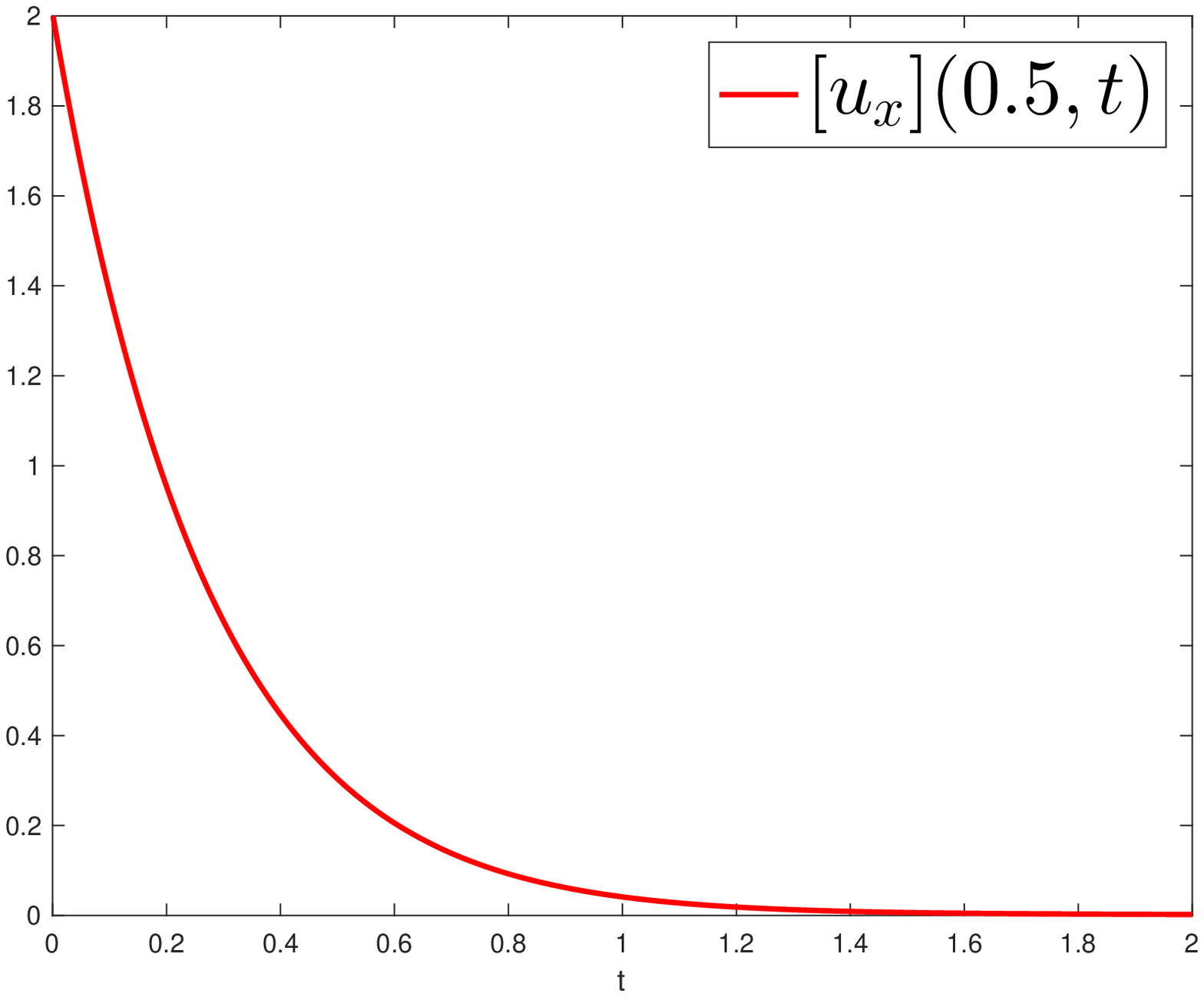}\qquad}
\subfigure[$\bm{[}u_x\bm{]}(0.5,t)$ by method \ref{method 2}]{
%method \ref{method 2}: Jump of difference quotients of numerical solutions]{
\includegraphics[width=5.3cm]{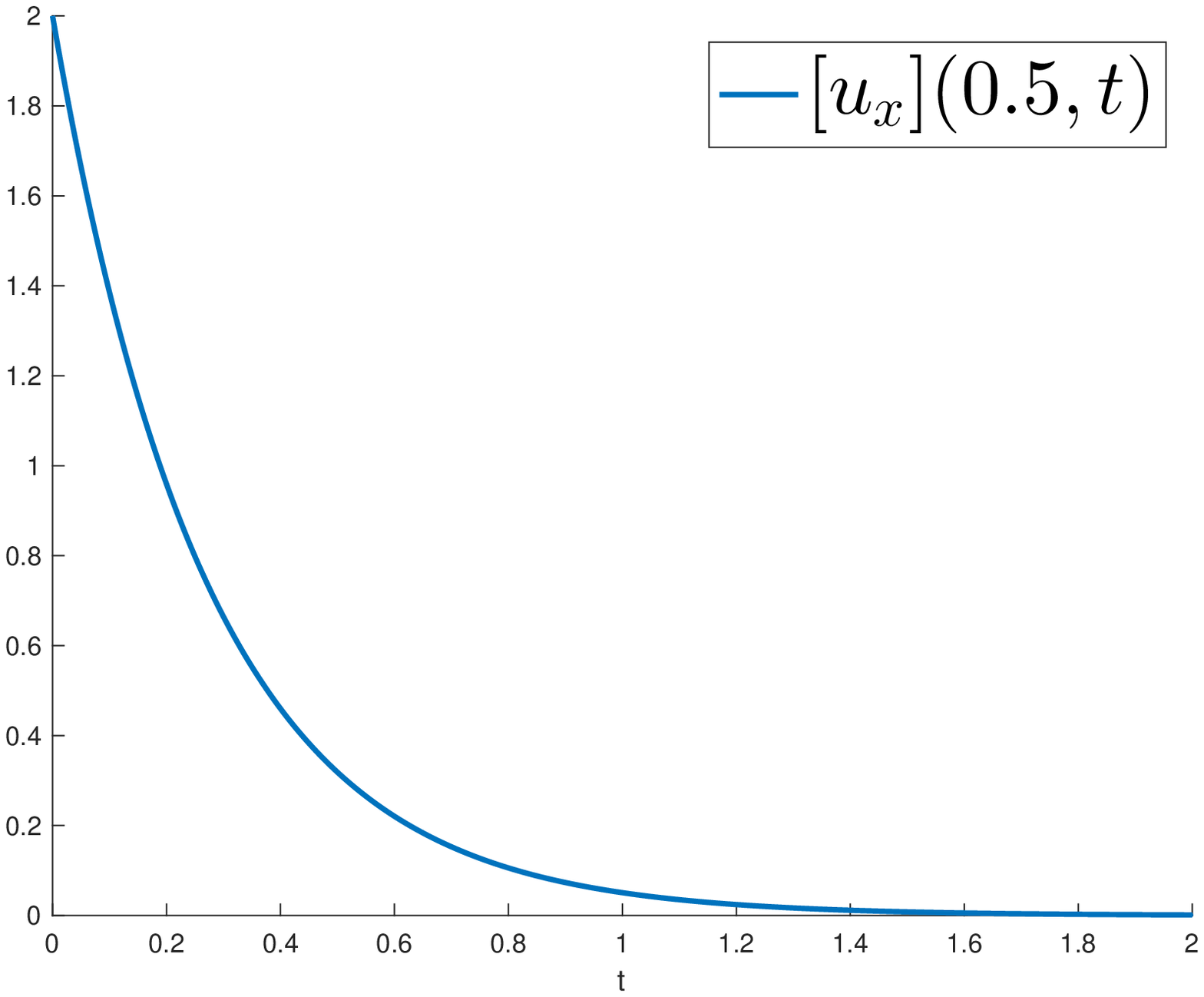}\qquad}
\caption{Plots of $ \bm{[}u_x\bm{]}(x,t)$  computed by different methods with $p = 0.5$ and $x = 0$ and $0.5$ for $t\in(0, 2)$}
\label{fig14} %% label for entire figure
\end{figure}

\subsubsection{$\psi_0$ smooth, $\zeta$ piecewise smooth}
Now, we consider the case where initial data $\psi_0$ is smooth but the horizon $\zeta$ is piecewise smooth, corresponding to the situation given in Theorem \ref{thm:jump-ux-dis}. 
Here we choose
$$H(s) = 20 e^{-10s^2}, \quad\psi_0(x)=e^{-10x^2}$$
and
 \beq\label{zeta1}
 \zeta(x) = \max\{\min\{kx,6\},0\}, \quad \text{for} \; k = 1, \; 2\;  \text{and} \; 3 \; \text{respectively}.
 \eeq
 
 In Figure \ref{fig4}, we can see that the horizon $\zeta$ selected here is three piecewise smooth functions with transition layers becoming narrower as the slope $k$ changes from 1 to 2 then to 3.  The jumps in the derivative $\zeta'$ are at $x=0$ and $x=6/k$. While the latter is within the nonlocal region, the former is a local-nonlocal transition point.  Thus, we may expect more evident jumps in the space derivative of the solution at $x=6/k$.
 
 The results of the solutions on $x\in (-4,6)$ 
are shown in Figure \ref{fig5} for $t\in (0, 2)$ with a top view
 and Figure \ref{fig6}   for   $x\in (-4,4)$  and $t\in (0, 1)$ for a zoomed-in 3D view of Figure \ref{fig5}.
The folding "vertical line" appearing in the contours again are capturing the discontinuities of $u_x$, which in this case, are generated by the discontinuities in the horizon functions.
The Gaussian peak in the initial data gets smeared as time goes on, due to the dissipative effect present in the nonlocal equations. Still, one can see that the smeared peak still travel largely along the local characteristics. A closer look shows that the smearing (dissipation) due to the nonlocal interactions gets more evident for larger $k$ (from the left plot to the right plot), which is consistent to the fact the nonlocal region expands with larger $k$,  Moreover, the 
 folding lines also become more evident (as indicated by more dramatic color changes shown in the plots).

 \begin{figure}[htb]\label{ex21}
\centering
{
\includegraphics[width=7.5cm]{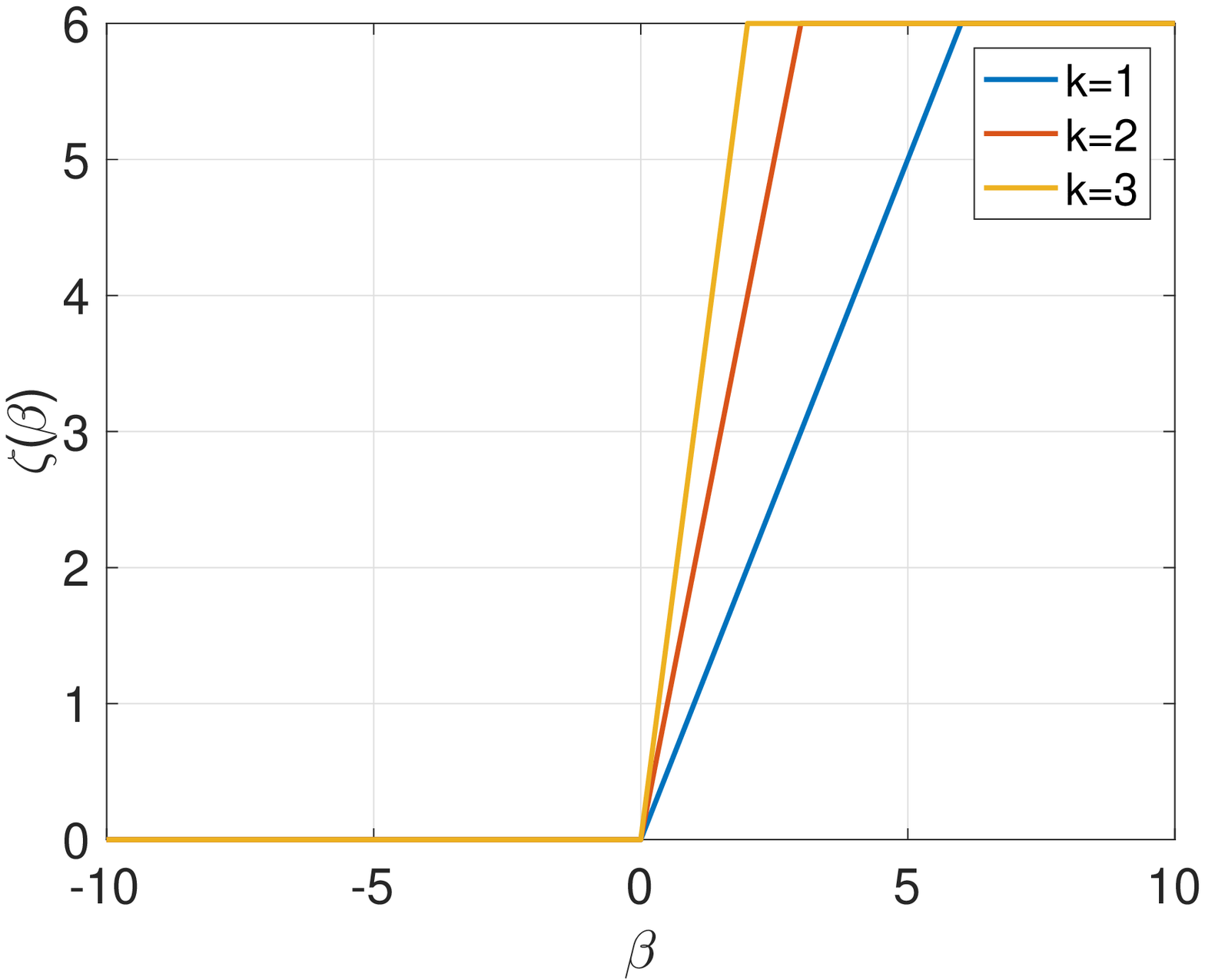}}
\caption{Plot of different choices of piecewise smoothly defined $\zeta = \zeta(\beta)$}
\label{fig4} %% label for entire figure
\end{figure}

\begin{figure}[htb]
\centering
\subfigure[$k=1$]{
\includegraphics[width=4.3cm]{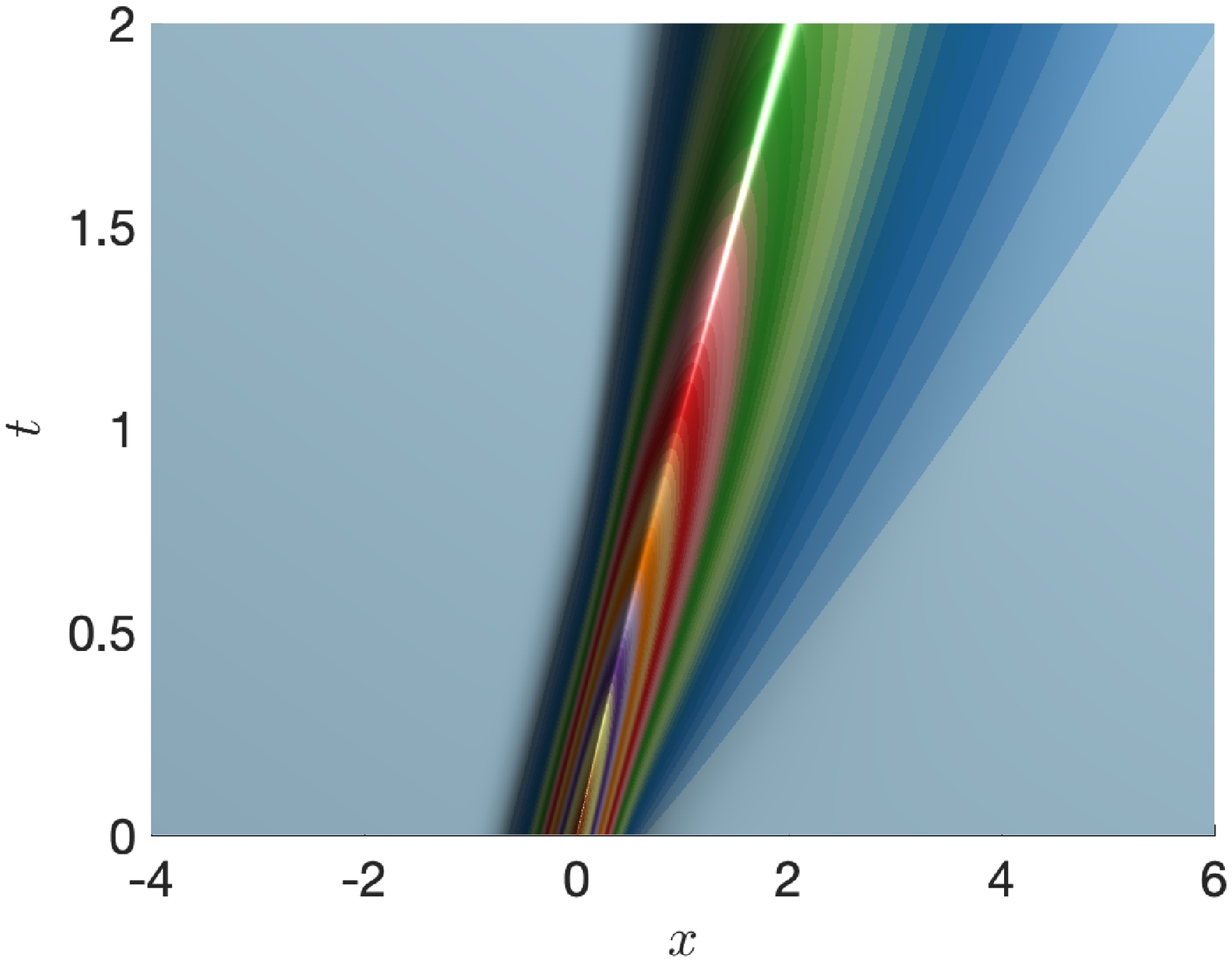}}
\subfigure[$k=2$]{
\includegraphics[width=4.3cm]{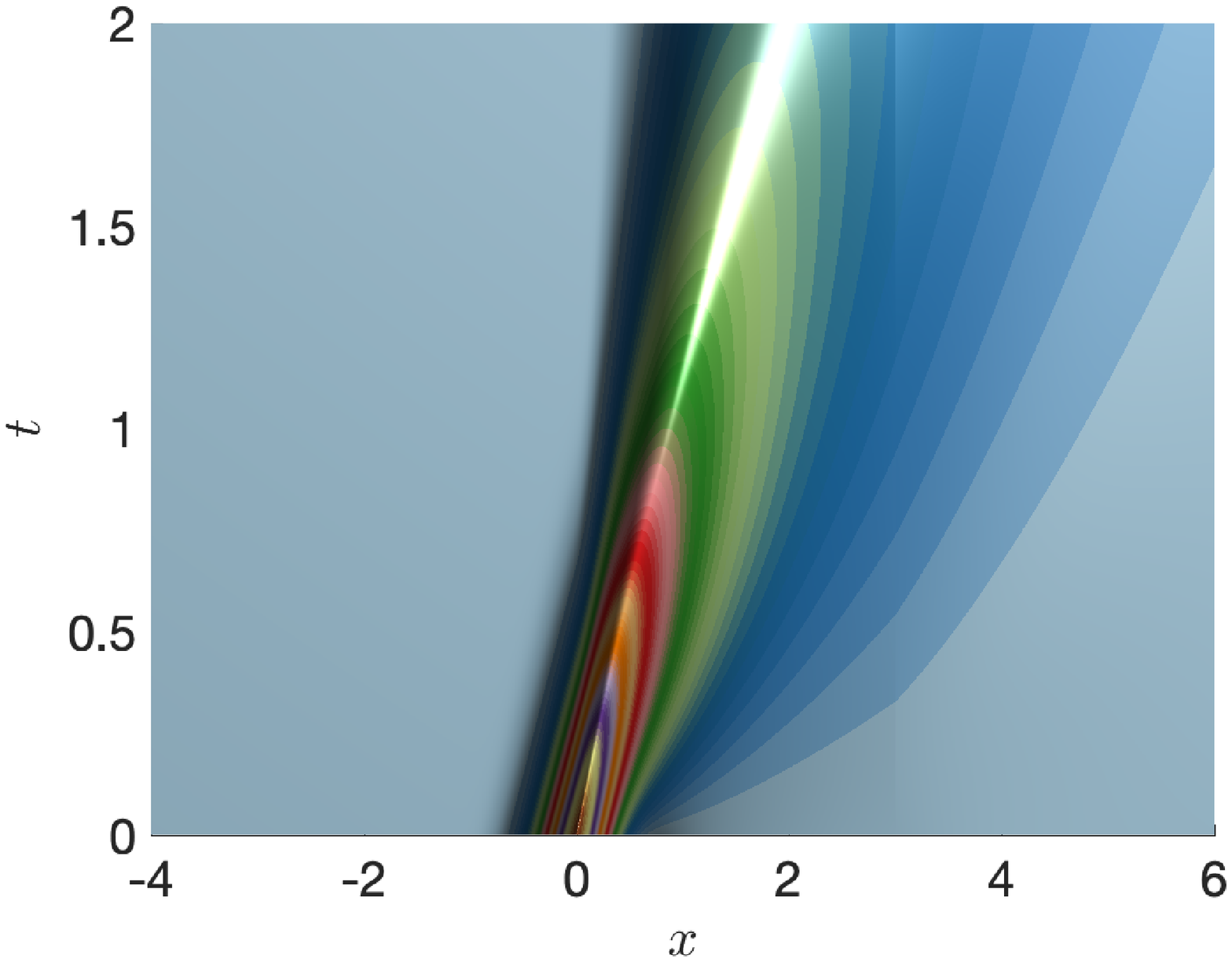}}
\subfigure[$k=3$ ]{
\includegraphics[width=4.3cm]{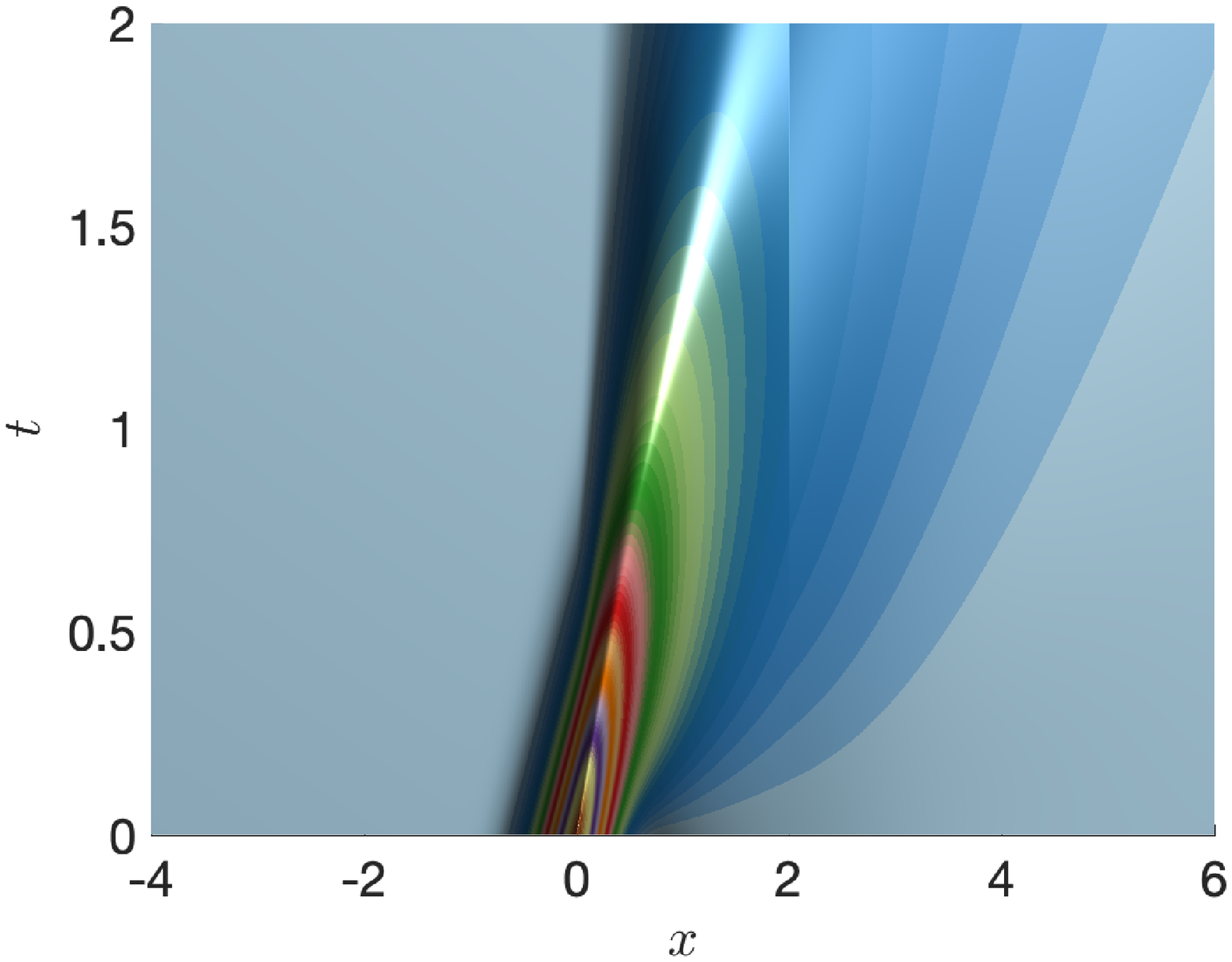}}
\caption{Wave propagation  corresponding to different choices of $k$ for piecewise smooth $\zeta = \zeta(x)$ with a smooth initial data: top view}
\label{fig5} %% label for entire figure
\end{figure}

\begin{figure}[htb]
\centering
\subfigure[$k=1$]{
\includegraphics[width=4.3cm]{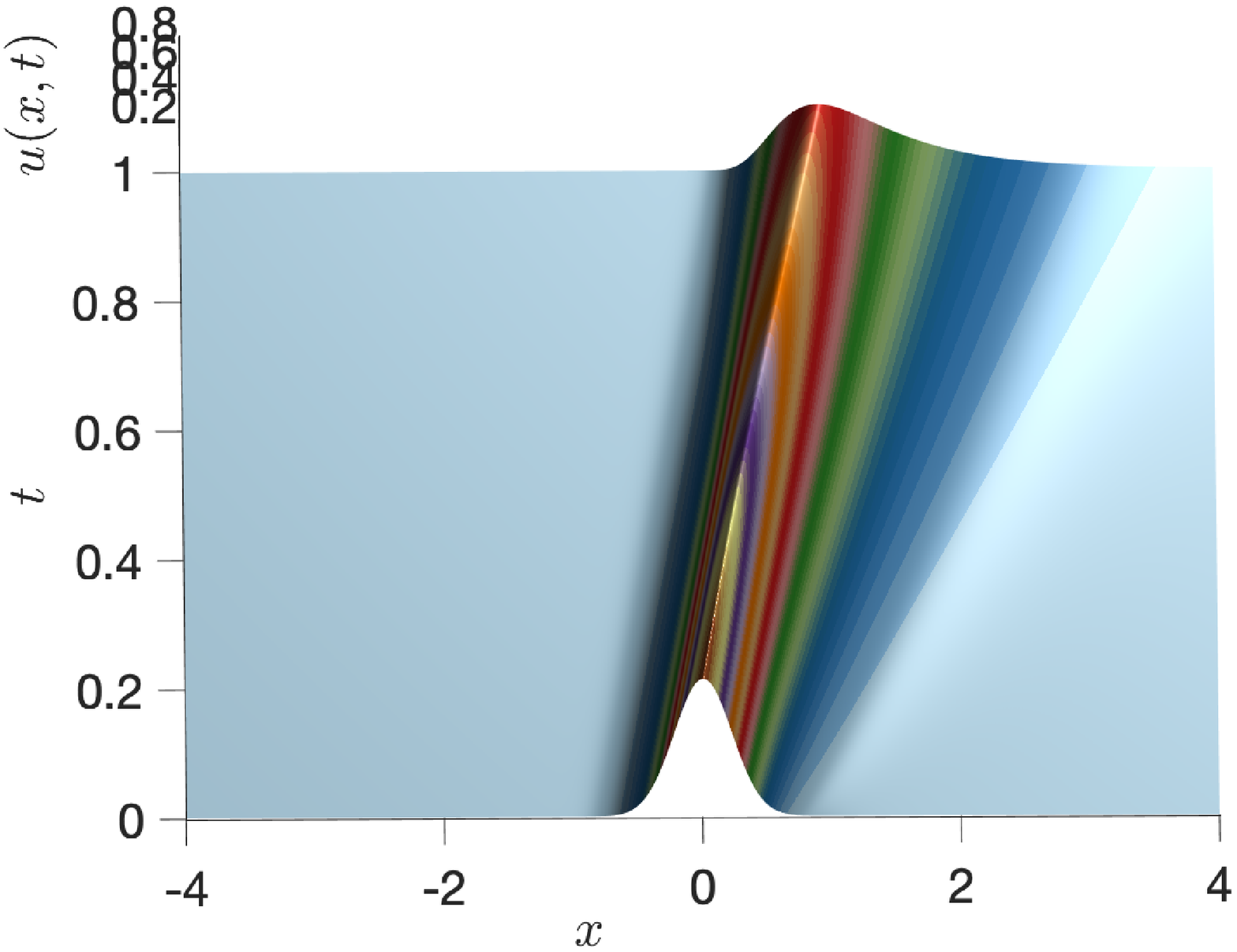}}
\subfigure[$k=2$]{
\includegraphics[width=4.3cm]{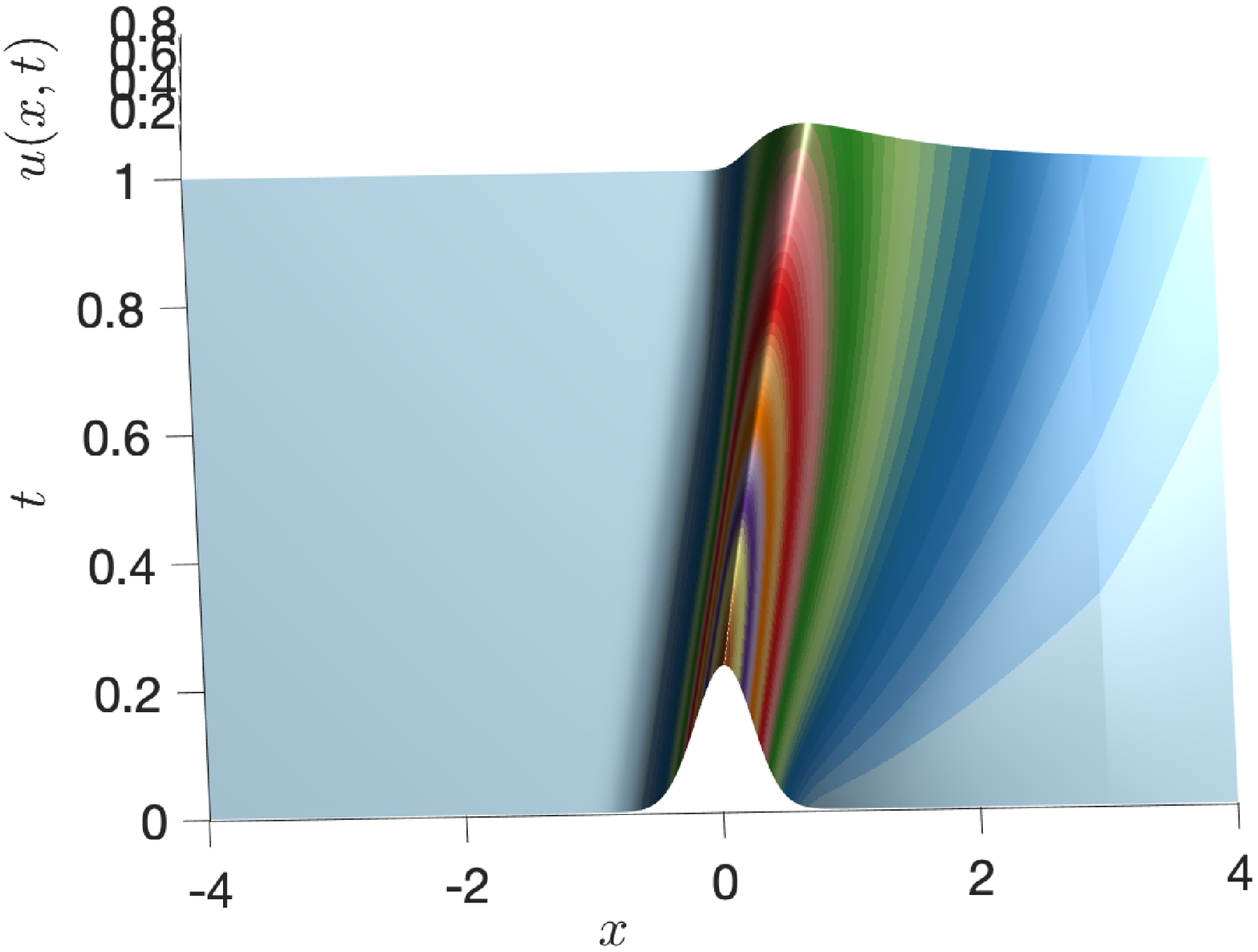}}
\subfigure[$k=3$]{
\includegraphics[width=4.3cm]{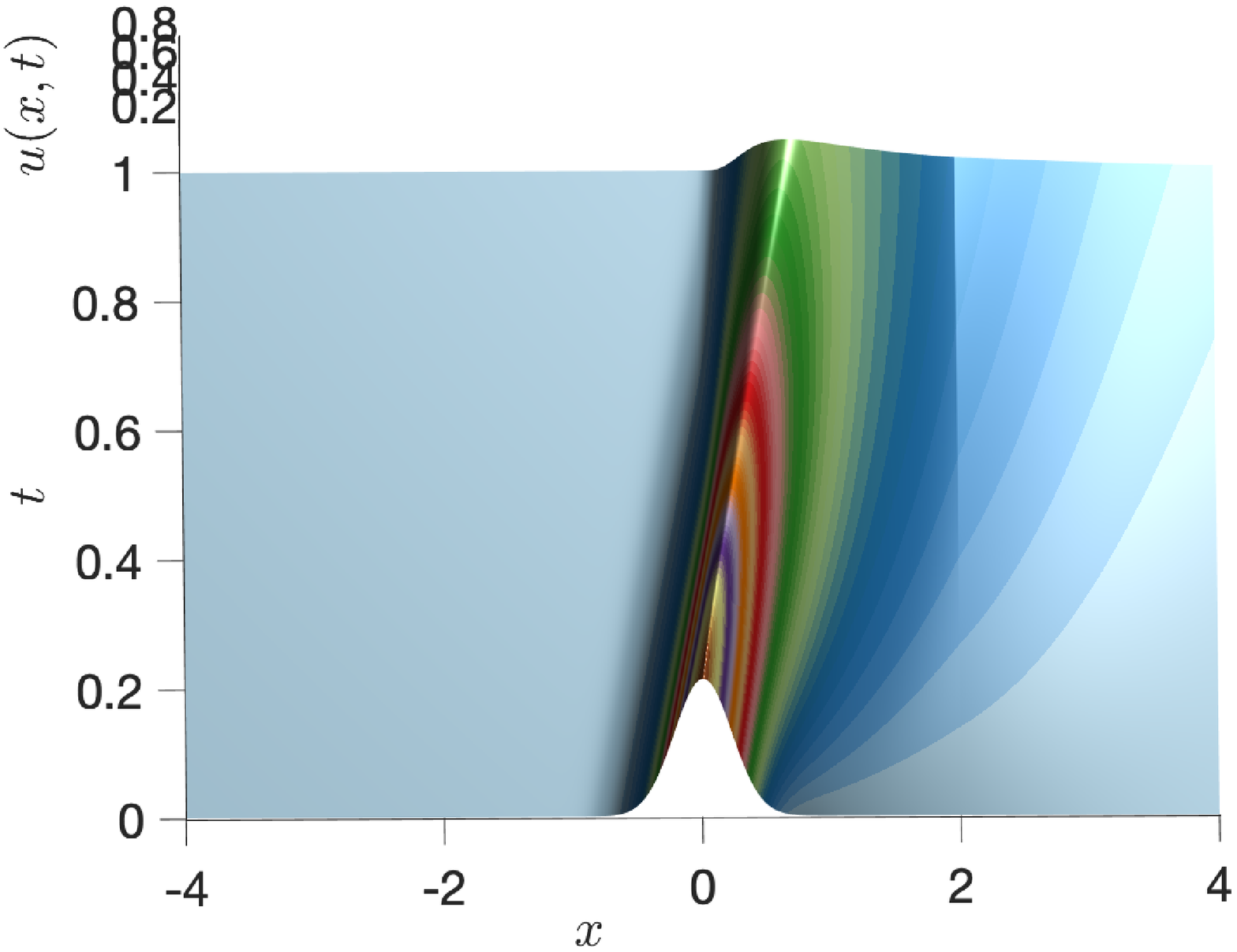}}
\caption{Wave propagation  corresponding to different choices of $k$ for piecewise smooth $\zeta = \zeta(x)$ with a smooth initial data: zoom-in 3D view}
\label{fig6} %% label for entire figure
\end{figure}

In Figure \ref{fig7}, we examine more closely the time evolution of $ \bm{[}u_x\bm{]}(x,t)$ at $x = \frac6k$ for different  horizon choices. We present the results computed by the direct evaluation of $ \bm{[}u_x\bm{]}(x,t)$ using the
 method  \ref{method 2}.

%by numerically approximating equation \eqref{eq:jump-ux2}, i.e using method mentioned in Section \ref{method 2}.
For different values of $k$, we can observe some consistent patterns.  First, as predicted by
 the equation \eqref{eq:jump-ux-dis}, with the smooth initial data, we can see that  there is no jump  initially in  $u_x(\frac6k,0)$. As $t>0$, 
  $\bm{[}u_x\bm{]}(\frac6k,t)$ starts to change with the overall magnitude gets larger for the case with a larger value of $k$. This can be seen from
  the equation \eqref{eq:jump-ux-dis}, with the smooth initial data, we can see that $ \bm{[}u_x\bm{]}(x,t)$ is proportional to the jump in 
$\zeta'$. The latter is measured by $k$ for our example. As the initial smooth Gaussian peak gets smeared, it still travels along the characteristic lines 
which contributes to the more dramatic change in   $\bm{[}u_x\bm{]}(\frac6k,t)$ during the time when the peak passes through  $x = \frac6k$ later in time,
Overall, these observations are consistent to the patterns of the computed numerical solutions presented in Figures \ref{fig5} 
 and Figure \ref{fig6}.

\begin{figure}[htb]
\centering
%\subfigure[$k=1, x = 0$ up to $t = 10$]{
%\includegraphics[width=4cm]{f52.eps}}
\subfigure[$k = 1$ and $x = 6$]{
\includegraphics[width=4.3cm]{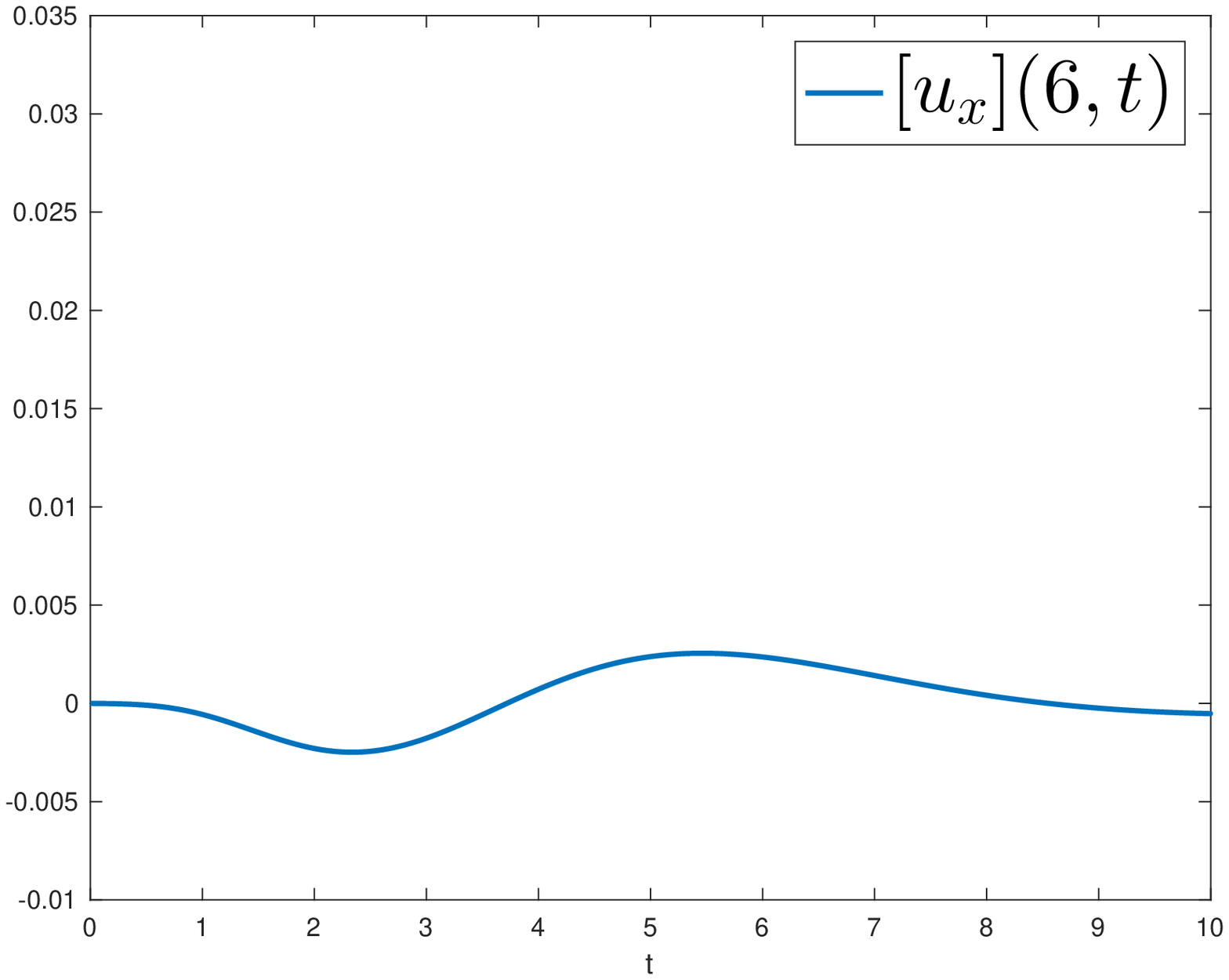}}
\subfigure[$k = 2$ and $x = 3$]{
\includegraphics[width=4.3cm]{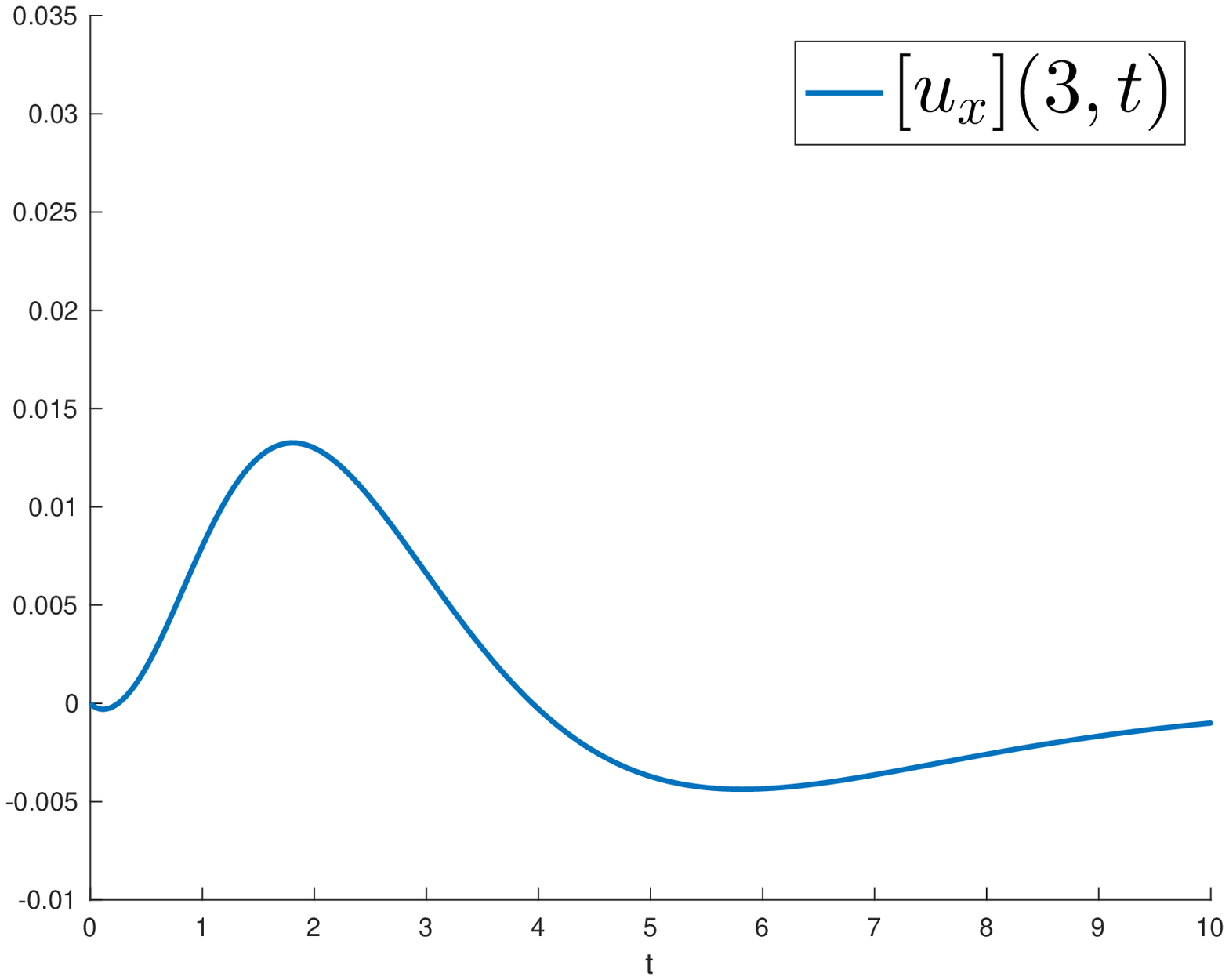}}
\subfigure[$k = 3$ and $x = 2$]{
\includegraphics[width=4.3cm]{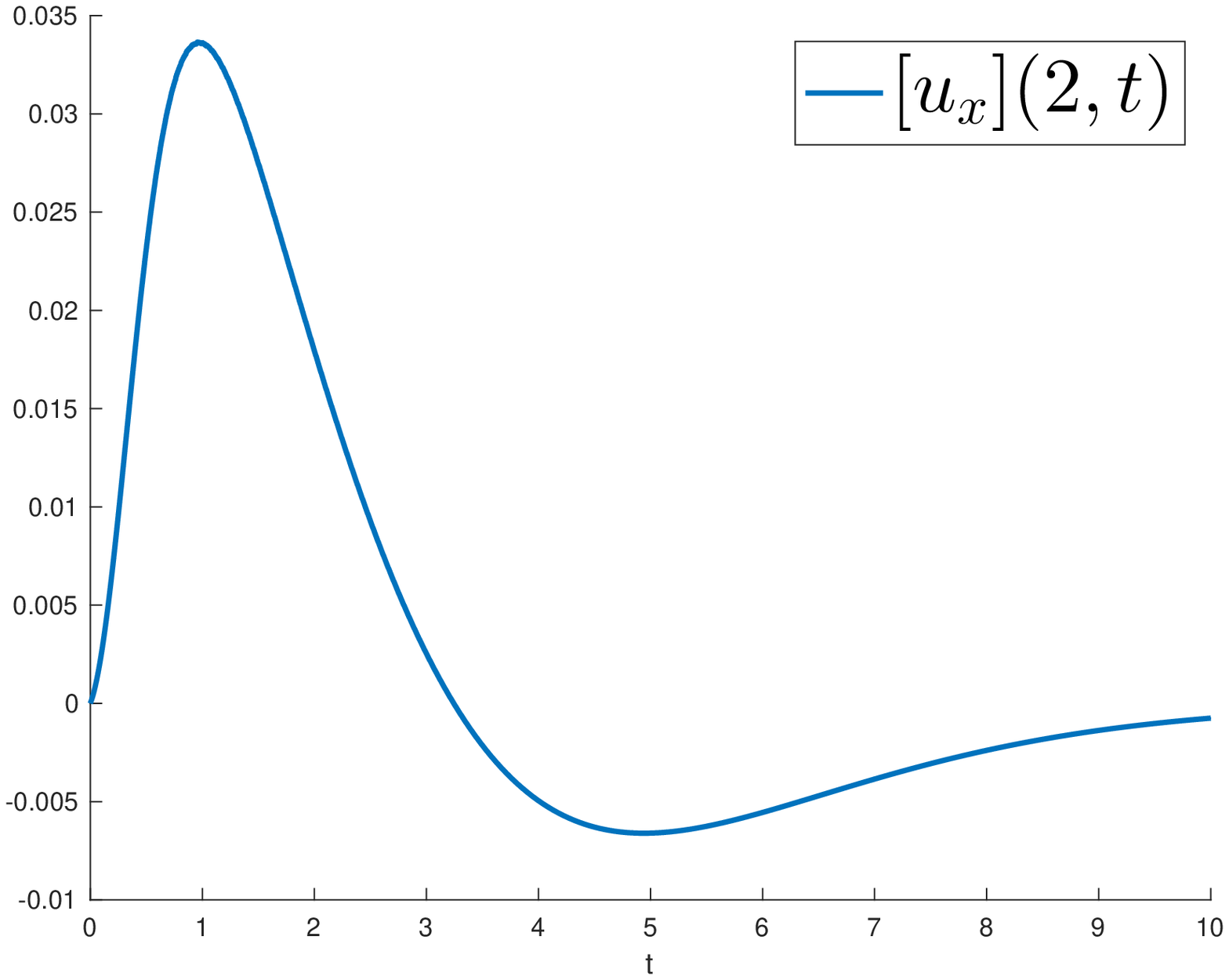}}
\caption{Plots of $ \bm{[}u_x\bm{]}(x,t)$ computed by method  \ref{method 2} corresponding to different $k$ and $x$ up to $t\in(0, 10)$}
\label{fig7} %% label for entire figure
\end{figure}

\subsubsection{$\psi_0$ piecewise smooth, $\zeta$ piecewise smooth}
Now, we consider the case where both the horizon $\zeta$ and the  initial data $\psi_0$ are piecewise smooth. corresponding to the situation given in theorem \ref{thm:jump-ux-dis}. 

Here we choose $H(s) = 20 e^{-10s^2}$, 
$\psi_0(x)$ is given by \eqref{eq:initalhat} with $p =0.5$, $1$ and $2$ respectively,
and $ \zeta$ is taken to be the same as \eqref{zeta1}, with $k = 1$, $2$ and $3$ respectively.
 
 In this case, according to the previous theorem, we expect that to see discontinuities  in $u_x$ at locations where the derivative of either $\psi_0$ or $\zeta$ is discontinuous.
The results of the solutions  for $k=1$ and $p=0.5$ and $p=1.0$ on $x\in (-2,4)$ 
are shown in Figure \ref{fig15} for $t\in (0, 2)$ with a top view
 and Figure \ref{fig16}  for $t\in (0, 1)$  with a 3D view
 to illustrate solutions.  Similar results for $k=3$ and $p=2.0$ are presented in Figure \ref{fig16a} for $t\in (0, 3)$ with both top and 3D views.
Some folding "vertical lines", again for discontinuities of $u_x$ at $x = 0$, $x = p$ and $x = \frac6{k}$, can be observed, which represent the stationary discontinuities in the spatial derivatives of solutions at those locations. Also, relatively speaking,  larger $k$ and smaller $p$ would all make the jumps in the spatial derivatives more visible.
%gets smaller, from the right plot to the left plot), the more dissipative the solution exhibits  (as indicated by more dramatic color changes shown in the plots). And, the faster the wave enters the nonlocal region (as $k$ gets larger, from the up plot to the bottom plot), the more dissipative the solution exhibits.

\begin{figure}[htb]
\centering
\subfigure[$k=1,  p = 0.5$]{
\includegraphics[width=6.5cm]{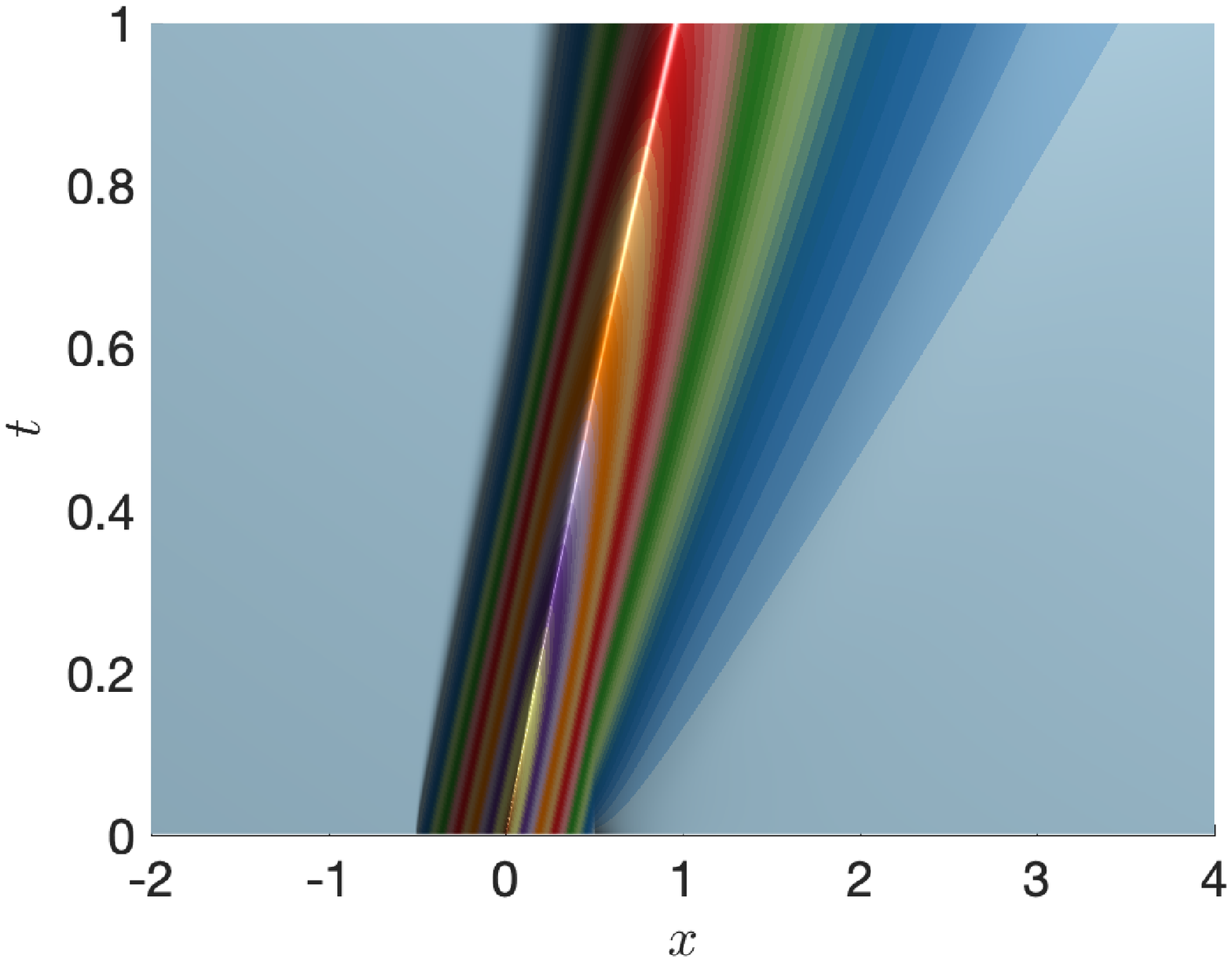}}
\subfigure[$k=1,  p = 1.0$]{
\includegraphics[width=6.5cm]{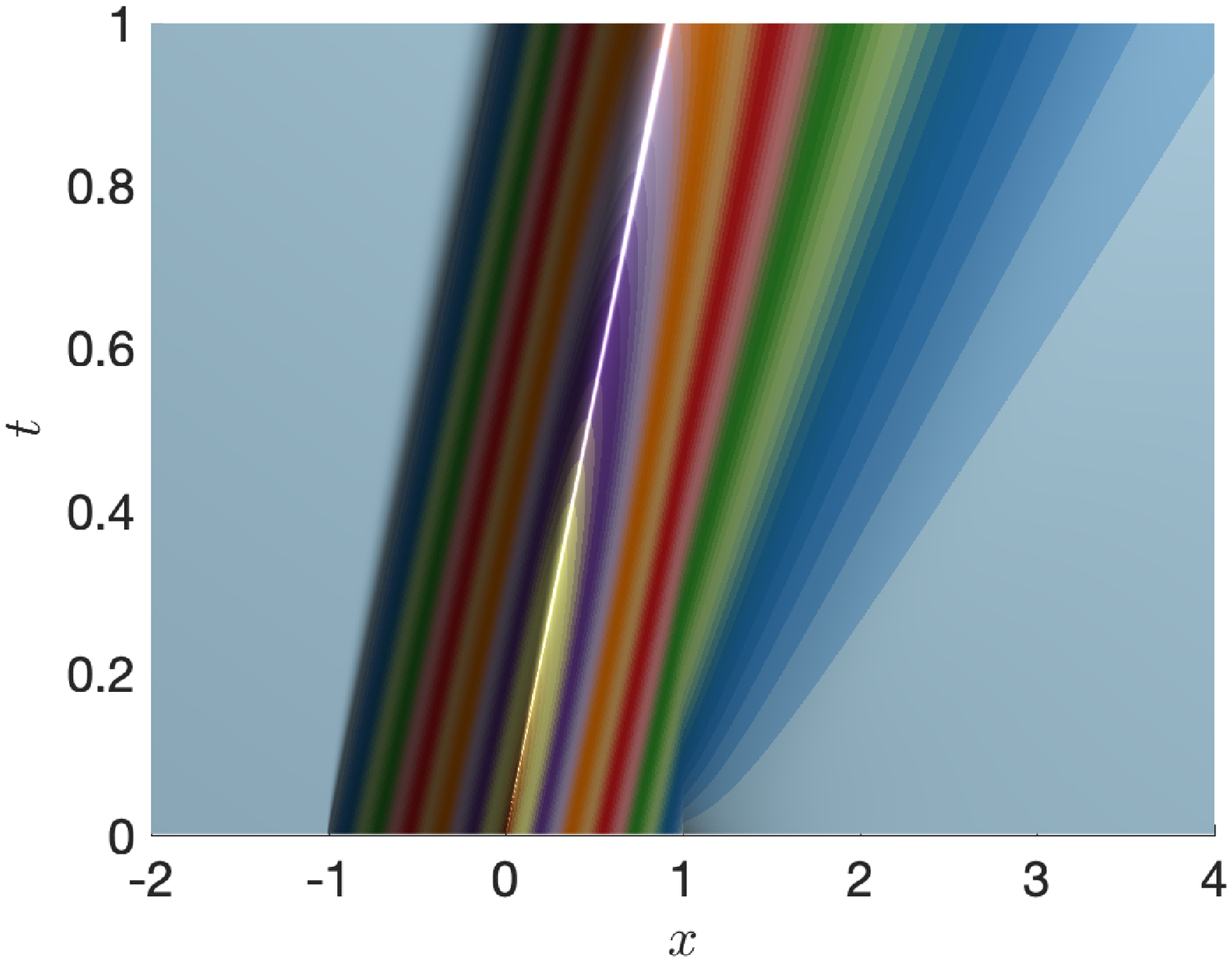}}
\caption{Wave propagation corresponding to different choices of  piecewise smooth $\zeta$ with a piecewise smooth initial data $\psi_0$: top view.}
\label{fig15} %% label for entire figure
\end{figure}
\begin{figure}[htb]
\centering
\subfigure[$k=2,  p = 0.5$]{
\includegraphics[width=6.5cm]{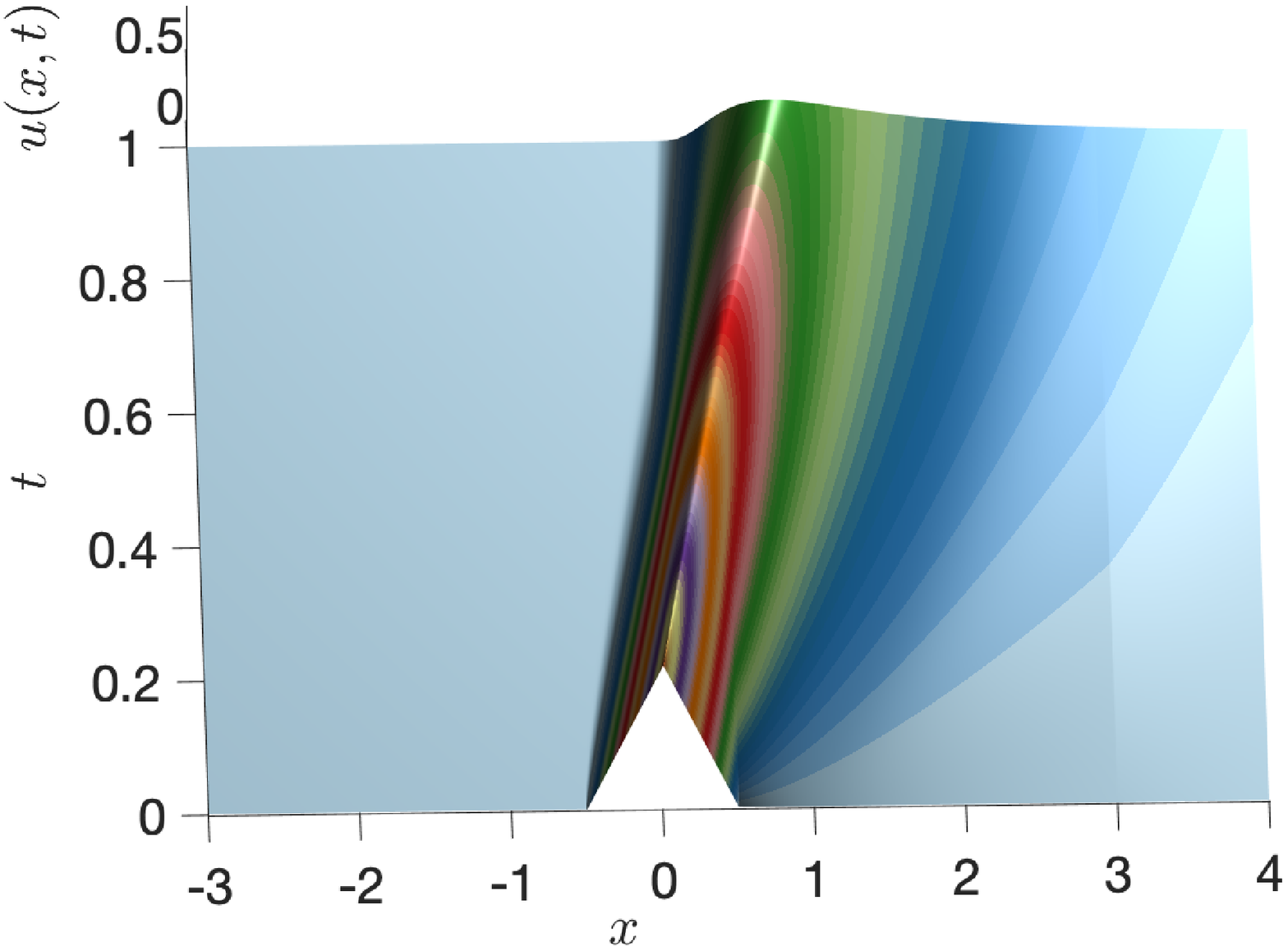}}
\subfigure[$k=2,  p = 1.0$]{
\includegraphics[width=6.5cm]{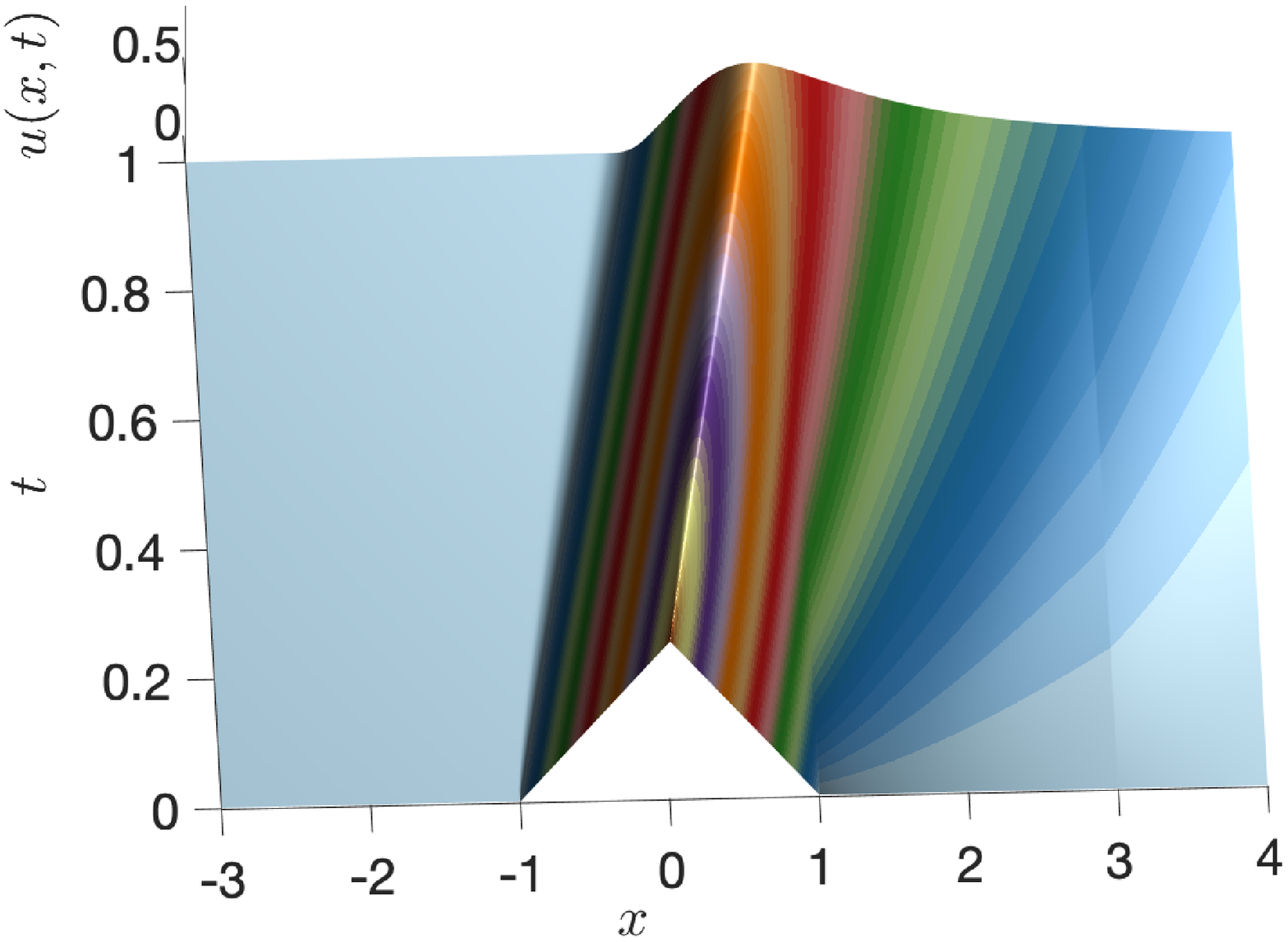}}
\caption{Wave propagation corresponding to different choices of  piecewise smooth $\zeta$ with a piecewise smooth initial data $\psi_0$: zoom-in 3D view.}
\label{fig16} %% label for entire figure
\end{figure}
\begin{figure}[htb]
\centering
\subfigure[top view]{
\includegraphics[width=6.2cm]{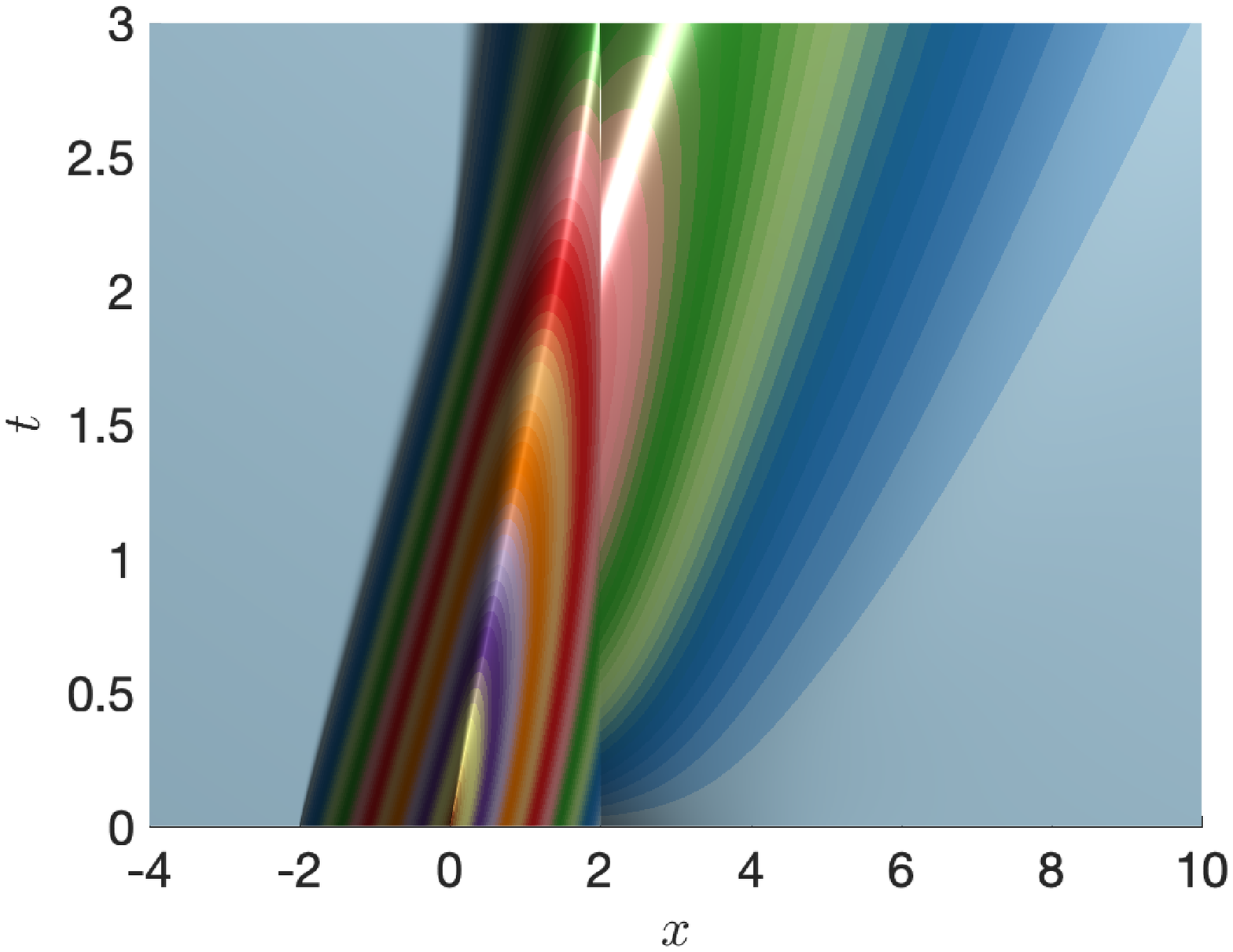}}
\subfigure[3D view]{
\includegraphics[width=7cm]{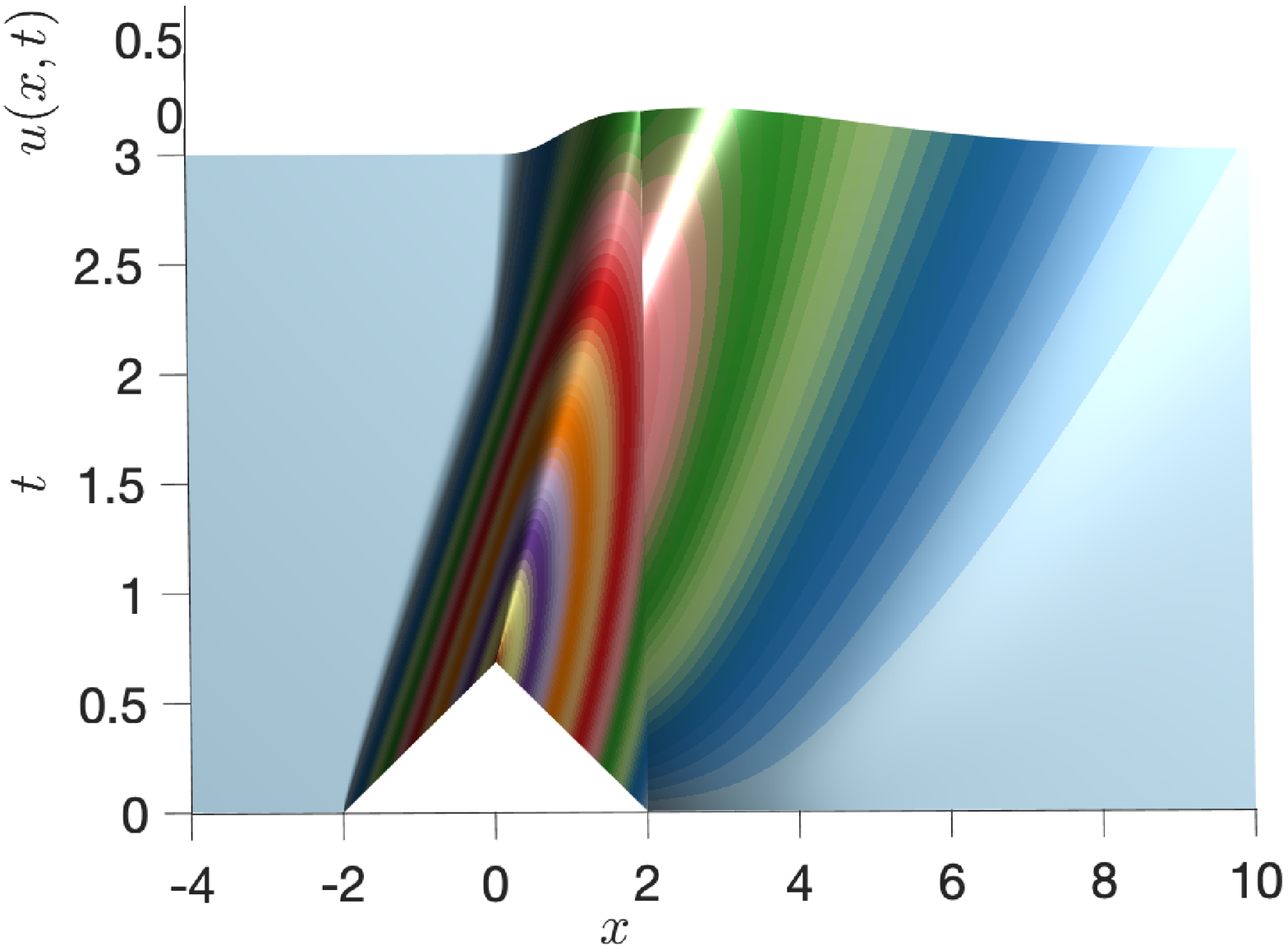}}
\caption{Wave propagation corresponding to different views of  piecewise smooth $\zeta$ with a piecewise smooth initial data $\psi_0$ with $k = 3$ and $p=2.0$.}
\label{fig16a} %% label for entire figure
\end{figure}

To see the evolution of the jumps in $u_x$ in more details, we plot $\bm{[}u_x\bm{]}(x,t)$ in time for % $x=0$,
 $x=p$ and $x=6/k$
 corresponding to  the cases with  $x_0=0$, $p=1 $ and $2$ 
for the initial data and $k=2$ and $3$ for the horizon function.  We note that the behavior of discontinuity of $u_x$ at $x = 0$ is more subtle, as it is caused by discontinuities of $\psi_{0x}$ and  $\zeta'$
 and the additional fact that there is a transition from local ($\zeta=0$) to nonlocal ($\zeta>0$) regions at $x=0$.
 Note that with $\zeta(0)=0$, so \eqref{eq:jump-ux2} is technically not applicable at $x=0$. In the fully nonlocal region, we can use the method mentioned in Section \ref{method 2} for the evaluation of the equation \eqref{eq:jump-ux2}.  The results are presented in the plots in in Figure \ref{fig20},

 From the plots in in Figure \ref{fig20}, we first look at the case $x=p=1$ with $k=2$, Here, $\bm{[}u_x\bm{]}(1,t)$ decays monotonically to 0, which is purely due to the discontinuity of $\psi_{0x}$,  which is similar to the behavior observed for discontinuous initial data with a smooth horizon function,

Meanwhile for $p=1$ and $k=2$, at $x = \frac6k=3$, there is no  jump of the derivative of the initial data at this location, so
the generation of discontinuity in $u_x$  is controlled by $\zeta'$. We see that  $ \bm{[}u_x\bm{]}(x,t)$ starts from zero and varies in time, and is influenced by the passing of the wave fronts, and the behavior is similar to those observed before in plots in Figures \ref{fig7}. 
 
As for the case $k=3$ and $p=2$,  we have the co-existence of jumps in $\psi_{0x}$ and  $\zeta'$ at both $x=0$ and $x=6/k=p=2$. 
Only $x=2$ is in the nonlocal region. 
 We can observe the behavior from Figure \ref{fig20} (c) is largely dominated by the  jump of  $\psi_{0x}$, yet the  fast exponential decay is no longer observed due to the contributions of jumps of  in  $\zeta'$, which makes the evolution different from ones observed earlier.
  
 To summarize,   similar to what are demonstrated in previous examples, 
results for this more example involving  jumps in both $\psi_{0x}$ and  $\zeta'$
based on  the  estimates in Section \ref{method 2} are consistent to those presented by the numerical solutions to the nonlocal models and also match well with the theoretical understanding.

% We can observe that 
% there is an initial phase where the discontinuity quickly becomes 0, showing the decay due to the discontinuity of $\zeta'$. Then, $ \bm{[}u_x\bm{]}(x,t)$ suddenly changes to 1 and returns to 0 in Figure \ref{fig20}, because the region where $x < 0$ is local region with $\zeta(x) = 0$, the discontinuity of $ \bm{[}u_x\bm{]}$ at $x = - 1$ just propagates to the position where $x = 0.$ \tcr{ Also, $ \bm{[}u_x\bm{]}(x,t)$ suddenly changes to 0.5 in Figure \ref{fig25}, because the region where $x < 0$ is local region with $\zeta(x) = 0$, the discontinuity of $ \bm{[}u_x\bm{]}$ at $x = - 2$ just propagates to the position where $x = 0.$ At $x=p=2$ in Figure \ref{fig25}, $ \bm{[}u_x\bm{]}$ also changes monotonically to 0 but much slower, because the change of $ \bm{[}u_x\bm{]}$ is affected by the discontinuity of $\zeta'$ and $\psi_{0x}$ at the same time.

 \begin{figure}[htb]
\centering
\subfigure[$x =p= 1.0$, $k = 2$]{
\includegraphics[width=4.3cm]{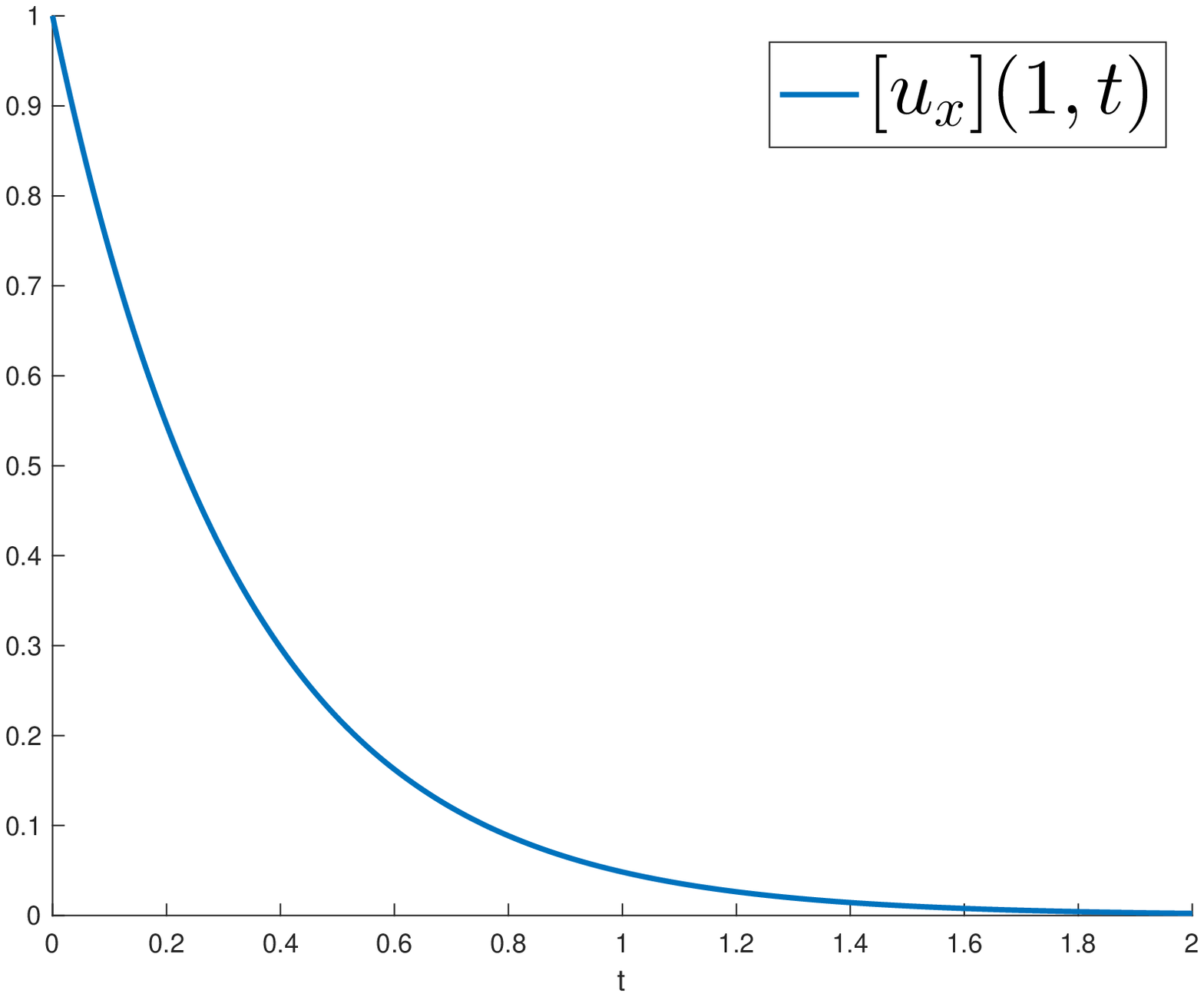}}
%\subfigure[$x = 0$ ]{
%\includegraphics[width=5cm]{f86.eps}}
%\caption{Plot of $ \bm{[}u_x\bm{]}(x,t)$ $p= 2.0$ and $k = 3$ up to $t \in (0, 3)$ by method \ref{method 2}}
%\label{fig25} %% label for entire figure
%\end{figure}
% \begin{figure}[htb]
%\centering
\subfigure[$x = \frac6k$,  $p= 1.0$, $k = 2$]{
\includegraphics[width=4.3cm]{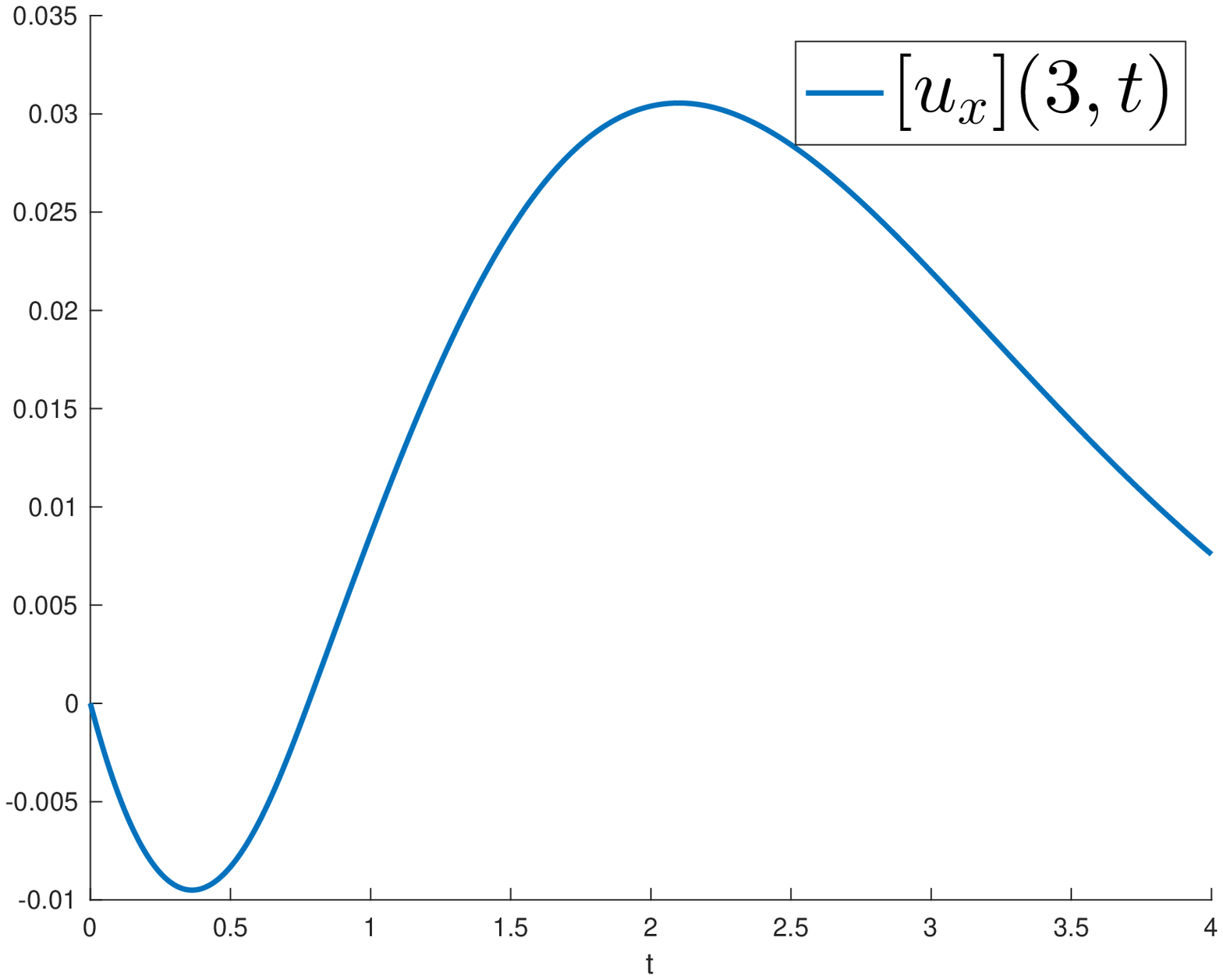}}
\subfigure[$x =p= 2.0=\frac6k$, $k = 3$ ]{
\includegraphics[width=4.3cm]{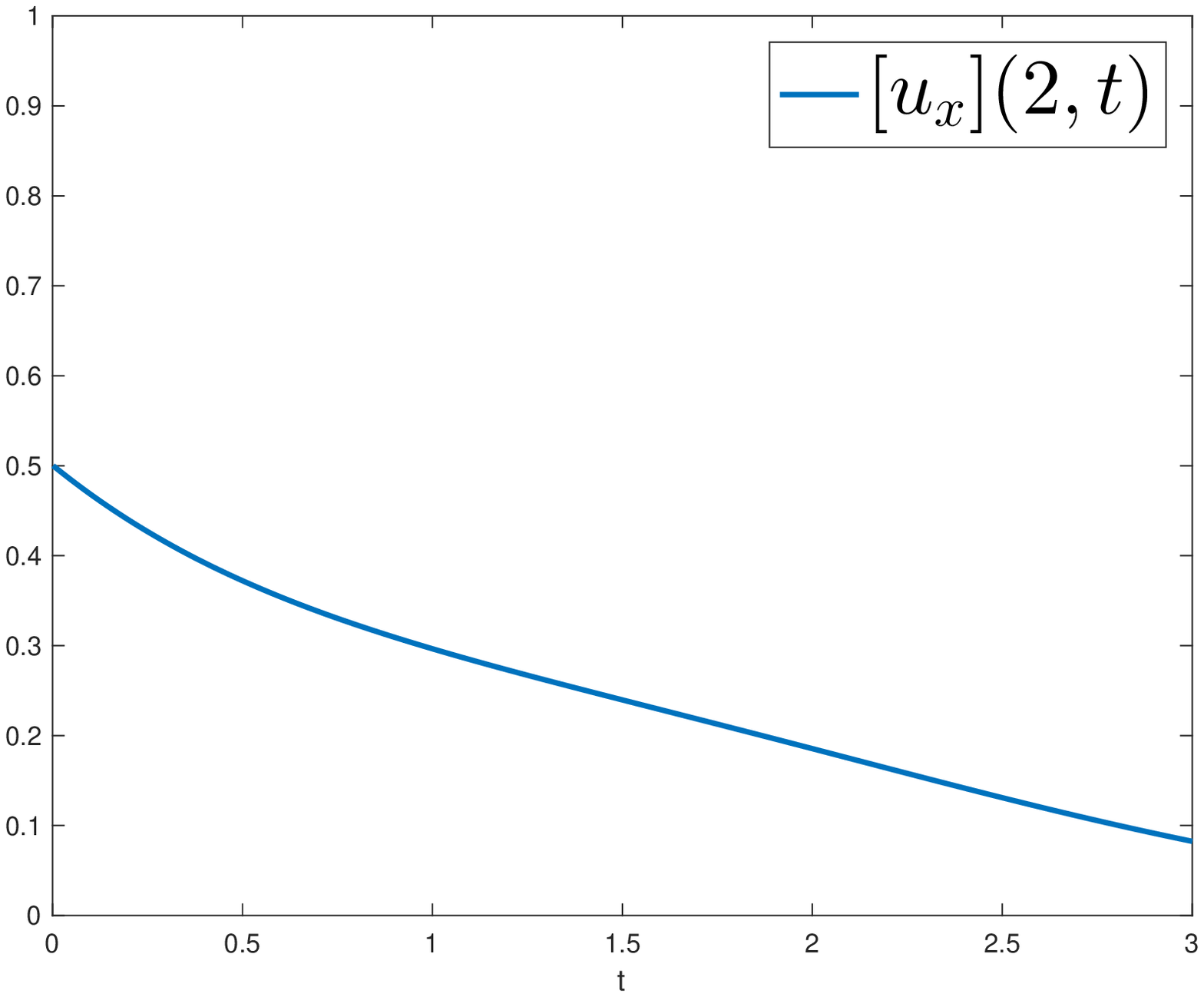}}
%\subfigure[$x = 0$ ]{
%\includegraphics[width=4cm]{f75.eps}}
\caption{Plots of $ \bm{[}u_x\bm{]}(x,t)$ at different spatial locations for various values of $p$ and $k$.}
\label{fig20} %% label for entire figure
\end{figure}

\section{Conclusion}
Understanding the phenomenon of singularity propagation is of particular interests in mechanics and other physical processes. In this work, we focus on the study of singularity propagation
 in waves modeled by a nonlocal convection equation with variable nonlocal interactions.  This new study  complements an earlier study in \cite{ac_2021} and  gives a deeper understanding  both theoretically and computationally. There are still many issues to be further investigated in the future. For example, 
 one may conduct more careful studies related to the wave dispersion and reflection due to the variations in the horizon parameter, as pursued in other recent studies \cite{helenli,dispersion}. The effect of boundary conditions is also an interesting problem worthy further investigation. Naturally, the one-dimensional linear problem is just one of the simplest examples, and for more practical and complex applications, we need to consider nonlinear and high-dimensional problems in future studies.

\section*{Declarations}

\subsection*{Data Availability}
The datasets generated during the current study are available from the corresponding author on reasonable request. They support 
our published claims and comply with field standards.

\subsection*{Competing of interest}
The authors declare that they have no known competing financial interests or personal relationships that could have appeared to 
influence the work reported in this paper.

\subsection*{Funding}
Research of Yan Xu is supported by  NSFC grant No. 12071455. Research of Qiang Du  is supported by NSF DMS-2012562.

\subsection*{Authors' contributions}
All authors contributed to the study conception and design. Material preparation, data collection and analysis were performed by Xiaoxuan Yu. The first draft of the manuscript was written by Xiaoxuan Yu and all authors commented on previous versions of the manuscript. All authors read and approved the final manuscript

\subsection*{Acknowledgements}
The authors would like to thanks Jiwei Zhang for helpful discussions on the subject.

 \subsection*{Financial disclosure}

None reported.

\bibliographystyle{plain}
%\bibliography{acvn}

\end{document}